\newtheorem{theorem}{Theorem}[section]
\newtheorem{lemma}[theorem]{Lemma}
\newtheorem{remark}[theorem]{Remark}
\newtheorem{example}[theorem]{Example}
\newtheorem{definition}[theorem]{Definition}
\newtheorem{lemma-definition}[theorem]{Lemma-Definition}
\newtheorem{corollary}[theorem]{Corollary}
\newtheorem{proposition}[theorem]{Proposition}
\newtheorem{lem-def}[theorem]{Lemma-Definition}
\newcommand{\R}{\mathbb R}
\newcommand{\Z}{\mathbb Z}
\newcommand{\Q}{\mathbb Q}
\newcommand{\F}{\mathbb F}
\def\op{\operatorname}
\def\as#1{\renewcommand\arraystretch{#1}}
\def\aut{\op{Aut}}
\def\bb{{\mathcal B}}
\def\sp\operatorname{Spec}
\def\dsc{\mathrm{Disc}}
\def\diso{\lower.4ex\hbox{$\downarrow$}\raise.4ex\hbox{\mbox{\scriptsize
$\wr$}}}
\def\gen#1{\big\langle\, {#1} \,\big\rangle}
\def\gl#1#2{\op{GL}_{#1}(#2)}
\def\iso{\ \lower.3ex\hbox{\as{.08}$\begin{array}{c}\lra\\\mbox{\tiny $\sim\,$}\end{array}$}\ }
\def\lg{l\raise.6ex\hbox to.2em{\hss.\hss}l}
\def\ll{\mathcal{L}}
\def\lra{\longrightarrow}
\def\m{{\mathfrak m}}
\def\md#1{\; \mbox{\rm(mod }{#1})}
\def\oo{\mathcal{O}}
\def\orb{\hbox to  .3em{$\backslash$}\backslash}
\def\p{\mathfrak{p}}
\def\q{\mathfrak{Q}}
\def\sm{S_{\operatorname{max}}}
\def\sii{\,\Longleftrightarrow\,}
\def\t{\theta}
\def\vv{\Vert~\Vert}
\def\dd{\mathcal{D}}
\def\v#1{\Vert#1\Vert}
\def\V#1{\Big\Vert#1\Big\Vert}
\def\vol{\operatorname{vol}}
\def\zc{\operatorname{zc}}
\def\pp{\mathbb{P}}
\def\sm{\operatorname{sm}}
\def\smb{\overline{\operatorname{sm}}}
\def\red{\operatorname{red}}
\def\bb{{\mathcal B}}
\def\cc{{\mathcal C}}
\def\dd{{\mathcal D}}
\def\od{\operatorname{OD}}
\def\ii{\mathcal{I}}
\def\cf{{C_f}}
\def\gl{\mathrm{GL}}
\newcounter{cs}
\newcommand{\fcasos}{\end{itemize}\setcounter{cs}{1}}
\newfont{\tit}{cmr12 scaled \magstep3}
\def\ww#1_#2{w_#2(#1)}
\def\WW#1_#2{w_#2\Big(#1\Big)}
\def\kk{\mathcal{K}}
\def\ki{K_\infty}
\def\oi{{\mathcal O}_{F,\infty}}
\def\red{\operatorname{red}}
\def\tp{\mathrm{tr}}
\def\o1{{\mathcal O}_F}
\def\dia{\mathrm{diag}}
\def\sig{\operatorname{Sig}}
\def\lcm{\mathrm{lcm}}
\def\NM{\mathrm{Norm}}
\def\sm{\operatorname{sm}}
\def\LS{\mathrm{LS}}
\def\ma{\mathrm{Max}}
\def\Tr{\mathrm{Tr}_{F/K}}
\title{Lattices over Polynomial Rings and applications to function fields}
\thanks{This research was supported by MTM2013-40680-P from the
Spanish MEC and by the Netherlands Organization for 
Scientific Research (NWO) under grant 613.001.011.}
\author{Jens-Dietrich Bauch}
\address{Math Department - Simon Fraser University
SCK 10502 - 8888 University Drive
Burnaby, BC    V6C 1A5, Canada}
\email{j.bauch@sfu.ca}
\keywords{Lattices, polynomial ring, reduction algorithm,riemann-roch spaces}
\begin{document}

\begin{abstract}
This paper deals with lattices $(L,\vv)$ over polynomial rings, where $L$ is a finitely generated module over $k[t]$, the polynomial ring over the field $k$ in the indeterminate $t$, and $\vv$ is a discrete real-valued length function on $L\otimes_{k[t]}k(t)$. A reduced basis of $(L,\vv)$ is a basis of $L$ whose vectors attain the successive minima of $(L,\vv)$. We develop an algorithm which transforms any basis of $L$ into a reduced basis of $(L,\vv)$. By identifying a divisor $D$ of an algebraic function field with a lattice $(L,\vv)$ over a polynomial ring, this reduction algorithm can be addressed to the computation of the Riemann-Roch space of $D$ and the successive minima of $(L,\vv)$, without the use of any series expansion.

\end{abstract}
\maketitle

\section*{Introduction}
The theory of lattices over the integers is an important tool in algebraic number theory. Lattices over the polynomial ring $k[t]$ in an indeterminate $t$, over a field $k$, admit a similar development although the theory becomes simpler. For instance, a shortest vector in a lattice can be found in polynomial time, whereas this problem shall be deemed to be difficult in a lattice over $\Z$. 

The theory of lattices over $k[t]$ is in substance due to Mahler \cite{MA}. Lattices over polynomial rings (or Puiseux series rings) are used to factorize multivariate polynomials \cite{Len} and to compute Riemann-Roch spaces in algebraic function fields \cite{F.H.}, \cite{Sch}, \cite{Schoe}. The idea of constructing bases of Riemann-Roch spaces of a divisor $D$ by computing vectors of short length in a lattice $(L,\vv)$ corresponding to $D$ is due to W. M. Schmidt \cite{Sch}. His method is based on the computation of Puiseux series in the context of function fields in one variable over number fields. This idea was adopted by M. Sch\"ornig \cite{Sch} to global function fields, which are tamely ramified at the places at infinity. Both methods use series expansions, which result in several technical problems; e.g. constant field extension are necessary and it has to take care that the series are computed to enough precision. F. Hess \cite{F.H.} could solve these problems by identifying a divisor $D$ with a 
simplified lattice $(L',\v{~}')$, which yields an algorithm that avoids series expansions and applies to function fields over arbitrary (``computable") constant fields. However, the simplified lattice $(L',\v{~}')$ (and therefore Hess' algorithm) does not carry out the successive minima of the original lattice attached to $D$, only approximations.

The theory of lattices over $k[t]$ plays an important role in coding theory and cryptanalysis in the context of convolutional codes \cite{Code} and in the computation of approximated common divisors over polynomial rings \cite{HCNH}.

In all these settings it is necessary to determine a reduced basis (cf. Section \ref{subredbasis}) of a lattice. This led to several reduction algorithms \cite{Len,Gath,Sch,Schoe,MS}, which transform any basis of a lattice into a reduced one. While these methods cover particular cases, we present a reduction algorithm which determines a reduced basis in a general setting (cf. Section \ref{reductionalgorithm}) and, applied to the computation of Riemann-Roch spaces, it fixes the flaw of Hess' algorithm; that is, we are able to compute the Riemann-Roch space of a divisor $D$ and the successive minima of the corresponding lattice without any series expansions.

The article is divided in the following sections. In Section 1 we introduce general lattices, their successive minima, and normed spaces. We define the concept of reduced bases and prove their existence in any lattice (cf. Lemma \ref{exist}). Moreover, we define length preserving maps between normed spaces (isometries) and compute the general structure of the isometry group of a normed space. In Section 2 we introduce a reduction algorithm, which transforms any basis of a non integral-valued lattice into a reduced one. It generalizes the classical approach of A. Lenstra for integral-valued lattices \cite{Len}, to the non integral-valued case. In Section 3 we consider the computation of Riemann-Roch spaces of divisors of function fields (and their successive minima) as an application of the new reduction algorithm. In Section 4 we give a precise estimation of the complexity of this method.

\section{Lattices and normed spaces}
Let $k$ be a field and denote by $A=k[t]$, $K=k(t)$, 
the polynomial ring and the rational function field in the indeterminate $t$ over $k$, respectively. 

For any rational function $x=a/b\in K$, where $a,b\in A$ and $b\ne0$, we define
$$
v_\infty(x)=\left\{\begin{array}{ll}
\deg b-\deg a,&\mbox{ if }x\ne0,\\
\infty,&\mbox{ if }x=0.
\end{array}
\right.
$$ 
This is a discrete valuation on $K$, with valuation ring
 $A_\infty=k[t^{-1}]_{(t^{-1})}\subset K$ and maximal ideal $P_\infty=\m_\infty=t^{-1}A_\infty$. We denote by $U_\infty=\{a\in K\mid v_\infty(a)=0\}$ the group of units of $A_\infty$. 
 
Let $\ki=k((t^{-1}))$ be the $v_\infty$-adic completion of $K$. The valuation $v_\infty$ extends in an obvious way to $\ki$. Let $\hat{A}_\infty\subset \ki$ be the valuation ring of $v_\infty$, and $\hat{\m}_\infty$ its maximal ideal.

On $\ki$ we may consider the \emph{degree function} $|~|:=-v_\infty$, which is an extension of the ordinary degree of polynomials: $|a|=\deg a$ for all $a\in A$.

Although for many applications it is sufficient to deal only with lattices over the polynomial ring $A$, we consider a more general situation.

Consider a principal ideal domain $R$ with field of fractions $K_R\subset K_\infty$. Typical instances for $R$ will be $R=A,\ A_\infty $, or $\hat{A}_\infty$.
\begin{definition}\label{norm} A \emph{norm}, or \emph{length function} on an $R$-module $L$ is a mapping
$$
\v{~}\colon L\lra \{-\infty\}\cup\R
$$ 
satisfying the following conditions:
\begin{enumerate}
\item $\Vert x+y\Vert\le \max\{\v{x},\Vert y\Vert\}$, for all $x,y\in L$, \item $\Vert ax\Vert=|a|+\v{x}$,
for all $a\in R$, $x\in L$,
\item $\v{x}=-\infty$ if and only if $x=0$.
\end{enumerate}
\end{definition}

For $r\in\R$ we define 
$$L_{\leq r}:=\{x\in L\mid \v{x}\leq r\},\qquad L_{< r}:=\{x\in L\mid \v{x}< r\}.$$

Note that  for any $x_1,x_2\in L$ with $\v{x_1}\neq \v{x_2}$, it holds
\begin{align}\label{ineq}
\v{x_1+x_2}=\max\{\v{x_1},\v{x_2}\}.
\end{align}

 Clearly, the degree function itself $|~|$ is a norm on $R$.

Let $e>1$ be a real number. By using $e^{|~|}$ instead of $|~|$, and $e^{\v{~}}$ instead of $\v{~}$, we would get the usual properties of a norm: $\v0=0$, $\v{ax}=|a|\v{x}$. However, we prefer to use additive length functions because then $|a|\in\Z$ is the ordinary degree of $a$, for any $a\in \ki$. 
Another psychologically disturbing consequence of our choice is the fact that a lattice may have negative volume (cf. Section \ref{detod}).

\begin{definition}\label{deflat}
Let $L$ be a finitely generated $R$-module and $\v{~}$ a norm on $L$. 
The pair $(L,\v{~})$ is said to be a \emph{lattice} over $R$ if $\dim_k L_{\leq r}<\infty$ for all $r\in \R$.

A \emph{normed space over $K_R$} is a pair $(E,\v{~})$, where $E$ is a finite dimensional $K_R$-vector space equipped with a norm $\v{~}$, admitting a finitely generated $R$-submodule $L\subset E$ of full rank such that $(L,\v{~})$ is a lattice.
\end{definition}

Clearly, if $(L,\v{~})$ is a lattice, then $L\otimes_RK_R$ is a normed space, with
the norm function obtained by extending $\v{~}$ in an obvious way. The second property in Definition \ref{norm} of
a norm shows that $L$ has no $R$-torsion, so that $L$ is a free
$R$-module and it is embedded into the normed space $L\otimes_RK_R$. 

Conversely, if $(E,\v{~})$ is a normed space, then any finitely generated $R$-submodule $M$ of full rank is a lattice with the norm obtained by restricting $\v{~}$ to $M$.

In fact, let $L\subset E$ be a sub-$R$-submodule such that  $(L,\v{~})$ is a lattice. Since there exists an $a\in K_R\setminus\{0\}$ with $a M\subset L$, we obtain 
$$
\dim_k M_{\leq r}=\dim_k (a M)_{\leq r+|a|}\le \dim_k (L)_{\leq r+|a|}<\infty,\quad \text{for all }r\in \R.
$$

\noindent{\bf Examples. }

The lattice $\oo$ is by definition the pair $(A,|~|)$, where $|~|$ is the degree function. Analogously, we define the normed space $\kk=(K,|~|)$.

Let $F/k$ be an algebraic function field and denote $\pp_\infty(F)$ the set of places over $P_\infty$ of $F$. Then, 
$$
\v{~}:=\-\min_{P\in\pp_\infty(F)}\Big\{\frac{-v_P(~)}{e(P/P_\infty)}\Big\}
$$
is a norm on $F$ and $(F,\v{~})$ becomes a normed space over $K$ (cf. Section \ref{RRChapter}).\medskip

Many concepts can be introduced both for lattices
and normed spaces. By the above considerations it is easy to deduce one from
each other. In the sequel we give several definitions for lattices over $A$ and we leave
to the reader the formulation of similar concepts for more general lattices or normed spaces.

\begin{definition}
A lattice homomorphism between the lattices $(L,\v{~})$ and $(L',\v{~}')$
is an $A$-module homomorphism $ \varphi\colon L\lra L'$ such that
$\v{\varphi(x)}'=\v{x}$ for all $x\in L$. 

A lattice isomorphism is called an \emph{isometry} between $(L,\v{~})$ and $(L',\v{~}')$.
\end{definition}

\begin{definition}
The \emph{orthogonal sum} of two lattices $(L,\vv)$, $(L',\vv')$ is
defined as:
$$
L\perp L'=(L\oplus L',\vv), \quad \v{(x,x')}=\max\{\v{x},\v{x'}'\},
$$
for all $x\in L$, $x'\in L'$. Instead of $\perp_{i=1}^nL$ we write for simplicity $L^n$.
\end{definition}

\begin{definition}\label{twist} 
Given a lattice $\ll=(L,\v{~})$ and a real number $r$, we define the \emph{twisted lattice} $\ll(r)$ to be the pair $(L,\v{~}')$, where $\v{~}'=\v{~}+r$.
\end{definition}

\begin{lemma}\label{ri}
Let $(L,\v{~})$ be an $A$-lattice of rank $n$. For $1\leq i\leq n$, consider
$$
\mathcal{R}_i=\{\max\{\v{x_1},\dots,\v{x_i}\}\mid x_1,\dots,x_i\in L \text{ are } A\text{-linearly independent }\}.
$$ 
Then, $r_i:=\inf(\mathcal{R}_i)$ exists and is attained by some vector in $L$.
These numbers $r_1\le\cdots \le r_n$ are called the \emph{successive minima} of $L$.
\end{lemma}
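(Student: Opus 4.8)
The plan is to derive everything from two elementary finiteness facts about the value set
$V:=\{\,\v{x}\mid x\in L\setminus\{0\}\,\}$, after which the statement for $\mathcal R_i$ becomes automatic, since $\mathcal R_i\subseteq V$.

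First I would observe that each $L_{\le r}$ is a $k$-subspace of $L$: condition (2) of Definition \ref{norm} gives $|\la|=\deg\la=0$, hence $\v{\la x}=\v{x}$, for $\la\in k^\times$, while condition (1) together with $\v0=-\infty$ gives closure under addition; so the hypothesis that $(L,\vv)$ is a lattice says precisely $\dim_k L_{\le r}<\infty$ for all $r$. From this I extract: \emph{(a) $V$ meets $(-\infty,r]$ in a finite set} — if $v_1<\cdots<v_m$ are distinct values $\le r$ and $\v{x_\ell}=v_\ell$, then any nontrivial $k$-linear combination of $x_1,\dots,x_m$ has, by \eqref{ineq}, norm equal to the largest $v_\ell$ occurring, in particular $\ne-\infty$, so $x_1,\dots,x_m$ are $k$-linearly independent in $L_{\le r}$ and $m\le\dim_k L_{\le r}$; and \emph{(b) $V$ is bounded below} — the spaces $W_N:=L_{\le -N}$ ($N\in\Z_{\ge 0}$) form a descending chain of finite-dimensional $k$-spaces with $\bigcap_N W_N=\{x\mid\v{x}=-\infty\}=\{0\}$ by condition (3); the dimensions $\dim_k W_N$ are non-increasing in $N$, hence eventually constant, and a chain of finite-dimensional spaces of constant dimension is stationary, so $W_N=\bigcap_M W_M=\{0\}$ for $N$ large, i.e. $\v{x}>-N$ for all $x\ne 0$. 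Combining (a) and (b): every nonempty subset of $V$ has a least element.

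Now fix $1\le i\le n$. Since $L$ is a finitely generated torsion-free (again by condition (2)) module over the PID $A$, it is free of rank $n$, so an $A$-basis supplies $i$ $A$-linearly independent vectors and $\mathcal R_i\ne\emptyset$. Every element of $\mathcal R_i$ has the form $\max\{\v{x_1},\dots,\v{x_i}\}$ with all $x_j\ne 0$, hence equals some $\v{x_j}\in V$; thus $\emptyset\ne\mathcal R_i\subseteq V$, and by the previous paragraph $r_i:=\inf\mathcal R_i$ exists, is $>-\infty$, and lies in $\mathcal R_i$, so it is attained by an $A$-linearly independent $i$-tuple in $L$. The chain $r_1\le\cdots\le r_n$ is immediate, as omitting the last vector of an $(i+1)$-tuple can only lower the maximum of the norms. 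Finally, to obtain a single $A$-linearly independent flag $x_1,\dots,x_n$ with $\v{x_j}=r_j$ for every $j$ — the form in which the successive minima are used when speaking of reduced bases — I would build the $x_j$ inductively: given such $x_1,\dots,x_i$, pick an $(i+1)$-tuple $y_1,\dots,y_{i+1}$ realizing $r_{i+1}$; a $K$-dimension count gives some $y_\ell\notin\langle x_1,\dots,x_i\rangle_K$, and then $\max\{r_i,\v{y_\ell}\}\in\mathcal R_{i+1}$ while also $\max\{r_i,\v{y_\ell}\}\le r_{i+1}$, forcing equality; if $\v{y_\ell}=r_{i+1}$ set $x_{i+1}=y_\ell$, and otherwise $\v{y_\ell}<r_{i+1}=r_i$, so $x_{i+1}:=x_i+y_\ell$ works by \eqref{ineq}.

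The only step that is not pure bookkeeping is (b), the boundedness below of $V$; I expect that to be the point a reader would want spelled out, and the descending-chain-of-finite-dimensional-spaces argument above is the clean way to settle it. Everything else is the ultrametric identity \eqref{ineq} together with linear algebra over $k$ and $K$.
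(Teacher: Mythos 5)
Your proof is correct and rests on the same mechanism as the paper's one‑line argument: a strictly decreasing sequence of attained norm values would yield an infinite strictly descending chain of the finite-dimensional spaces $L_{\le r}$, which is impossible; your steps (a) and (b) simply make explicit the boundedness below and local finiteness of the value set that the paper leaves implicit. The closing paragraph constructing a single $A$-linearly independent flag attaining all the $r_j$ goes beyond the statement itself, but it is precisely the fact needed later for Corollary \ref{exist}, so it is a worthwhile addition rather than a digression.
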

\begin{proof}
Suppose $\lambda_1>\lambda_2>\dots$ is a strictly decreasing sequence in $\mathcal{R}_i$. Then, we obtain a chain of $k$-vector spaces
$$
L_{\leq \lambda_1}\supsetneq L_{\leq \lambda_2}\supsetneq \dots.
$$
This is a contradiction to the fact that $L$ is a lattice.
\end{proof}

\subsection{Reduced bases}\label{subredbasis}
We fix throughout this section a normed space $(E,\v{~})$ over $K$ of dimension $n$. By a basis of $E$ we mean a $K$-basis. By a basis of a lattice $L\subset E$ we mean an $A$-basis.

\begin{definition}\label{defred1}
Let $\bb=\{b_1,\dots,b_m\}$ be a subset of $E\setminus\{0\}$. We say that $\bb$ is \emph{reduced} if for all $a_1,\dots,a_m\in K$, it holds
\begin{equation}\label{defred}
\v{a_1b_1+\cdots +a_mb_m}=\max\limits_{1\leq i\leq m}\{\v{a_ib_i}\}. 
\end{equation}
Equivalently, it suffices to check (\ref{defred}) for all families $a_1,\dots,a_m\in A$.
\end{definition}

The following observations are an immediate consequence of the definition of reduceness.

\begin{lemma}\label{product} \quad
\begin{enumerate}
\item A reduced family is $K$-linearly independent.
\item Let $\bb=\{b_1,\dots,b_m\}\subset E$ be a reduced set. Then, for any $a_1,\dots,a_m\in K^*$, the set $\{a_1b_1,\dots,a_mb_m\}$ is reduced.

\end{enumerate}
\end{lemma}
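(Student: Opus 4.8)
The plan is to prove the two assertions of Lemma \ref{product} directly from Definition \ref{defred1}, as the author signals that they are ``immediate consequences.''

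For part (1), suppose we have $a_1,\dots,a_m\in K$ with $a_1b_1+\cdots+a_mb_m=0$. Applying \eqref{defred} to these coefficients gives $\v{0}=\max_{1\leq i\leq m}\{\v{a_ib_i}\}$. Since $\v{0}=-\infty$ by property (3) of Definition \ref{norm}, every term $\v{a_ib_i}$ must equal $-\infty$, hence $a_ib_i=0$ for all $i$. As $b_i\neq 0$ (the $b_i$ lie in $E\setminus\{0\}$), again by property (3) applied in the normed space we get $a_i=0$ for all $i$. Thus $\bb$ is $K$-linearly independent.

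For part (2), fix $a_1,\dots,a_m\in K^*$ and let $c_1,\dots,c_m\in K$ be arbitrary. We must show $\v{\sum_i c_i(a_ib_i)}=\max_i\{\v{c_i(a_ib_i)}\}$. Rewrite $\sum_i c_i(a_ib_i)=\sum_i (c_ia_i)b_i$ and apply reducedness of $\bb$ to the coefficients $c_ia_i\in K$: this yields $\v{\sum_i (c_ia_i)b_i}=\max_i\{\v{(c_ia_i)b_i}\}$. It remains to observe that $\v{(c_ia_i)b_i}=\v{c_i(a_ib_i)}$, which is just associativity of scalar multiplication in $E$ together with the fact that both sides denote the norm of the same vector; no appeal to property (2) of the norm is even needed here since we are not splitting the scalar. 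Combining, $\{a_1b_1,\dots,a_mb_m\}$ is reduced.

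I do not anticipate a genuine obstacle; the only point requiring a little care is the edge case in part (1) where one must correctly invoke property (3) of the norm both to conclude $a_ib_i=0$ from $\v{a_ib_i}=-\infty$ and then to pass from $a_ib_i=0$ with $b_i\neq0$ to $a_i=0$, the latter using that the ambient normed space $E$ is a $K$-vector space so it is torsion-free. Everything else is a direct substitution into \eqref{defred}.
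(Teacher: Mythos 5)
Your proof is correct and is exactly the direct verification the paper has in mind (the paper omits the proof entirely, calling the lemma an immediate consequence of Definition \ref{defred1}). Both steps are sound: in (1) the passage from $\v{a_ib_i}=-\infty$ to $a_ib_i=0$ and then to $a_i=0$ is justified as you say, and in (2) the substitution $c_ia_i$ into \eqref{defred} together with $\v{(c_ia_i)b_i}=\v{c_i(a_ib_i)}$ settles the claim.
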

For a basis $\bb=(b_1,\dots,b_n)\in E^n$, denote by $c_\bb:E\rightarrow K^n$ the $K$-isomorphism mapping $x\in E$ to its coordinates in $K^n$ with respect to the basis $\bb$. 

\begin{lemma}\label{cbisiso}
Let $\bb=(b_1,\dots,b_n)\in E^n$ be a basis of $E$ with the vectors ordered by increasing length: 
$$
r_1:=\v{b_1}\le\cdots\le r_n:=\v{b_n}.
$$
Then, the following conditions are equivalent:
\begin{enumerate}
\item $\bb$ is a reduced basis of $E$.
\item $c_\bb:E\rightarrow \kk(r_1)\perp\cdots\perp\kk(r_n)$ is an isometry.
\item The lattice $L=\gen{\bb}_A$ is isometric to $\oo(r_1)\perp\cdots\perp\oo(r_n)$.
\end{enumerate}
\end{lemma}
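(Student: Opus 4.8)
The plan is to prove the equivalences by showing $(1)\Rightarrow(2)\Rightarrow(3)\Rightarrow(1)$, unwinding the definitions of reducedness, the orthogonal sum, and the twisted lattice/normed space. The key observation tying everything together is that for $x\in E$ with coordinates $c_\bb(x)=(a_1,\dots,a_n)\in K^n$ we have $x=a_1b_1+\cdots+a_nb_n$, and the norm on $\kk(r_1)\perp\cdots\perp\kk(r_n)$ of the vector $(a_1,\dots,a_n)$ is, by Definition \ref{twist} and the definition of orthogonal sum, exactly $\max_{1\le i\le n}\{|a_i|+r_i\}=\max_{1\le i\le n}\{|a_i|+\v{b_i}\}=\max_{1\le i\le n}\{\v{a_ib_i}\}$, where the last equality uses property (2) of a norm. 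So $c_\bb$ is an isometry onto $\kk(r_1)\perp\cdots\perp\kk(r_n)$ precisely when $\v{a_1b_1+\cdots+a_nb_n}=\max_i\{\v{a_ib_i}\}$ for all $(a_i)\in K^n$, which is verbatim the definition of $\bb$ being reduced (Definition \ref{defred1}). This settles $(1)\Leftrightarrow(2)$ in one stroke.

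For $(2)\Rightarrow(3)$: since $c_\bb$ is a $K$-isomorphism sending $b_i$ to the $i$-th standard basis vector $e_i$, it restricts to an $A$-module isomorphism $L=\gen{\bb}_A\to A^n=\oo^n$ (as an $A$-module), and by $(2)$ it is length-preserving, hence a lattice isometry onto $\oo(r_1)\perp\cdots\perp\oo(r_n)$ — note the underlying $A$-module of the latter is indeed $A^n$ with the norm $(a_i)\mapsto\max_i\{|a_i|+r_i\}$, which is the restriction of the norm on $\kk(r_1)\perp\cdots\perp\kk(r_n)$. For $(3)\Rightarrow(2)$: an isometry $\psi\colon L\to\oo(r_1)\perp\cdots\perp\oo(r_n)$ of $A$-lattices extends uniquely to a $K$-linear isometry on the ambient normed spaces $L\otimes_A K\to K^n$; one must only check that the image of $b_i$ can be arranged to be $e_i$ (after composing with a suitable automorphism of $\oo(r_1)\perp\cdots\perp\oo(r_n)$), or alternatively argue directly that the extended map coincides with $c_\bb$ up to such an automorphism, so that in any case reducedness of $\bb$ follows, giving $(2)$ (equivalently $(1)$).

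The one point that needs a little care — and the place I expect the only genuine friction — is the $(3)\Rightarrow(1)$ (or $(3)\Rightarrow(2)$) direction: an abstract isometry $L\cong\oo(r_1)\perp\cdots\perp\oo(r_n)$ does not a priori send the given basis $\bb$ to the standard basis, so one cannot immediately conclude that $\bb$ itself is reduced from the mere existence of such an isometry. The clean way around this is to observe that reducedness is preserved under isometries and that the standard basis of $\oo(r_1)\perp\cdots\perp\oo(r_n)$ is reduced (immediate from \eqref{ineq} and the definition of the orthogonal-sum norm), and that any $A$-basis of a lattice pulls back to an $A$-basis; combined with the fact that the lengths $r_i$ are an isometry invariant (they are the successive minima, by Lemma \ref{ri}, once one checks the standard basis attains them) one deduces that $\bb$, being the $\psi$-preimage of \emph{some} ordered $A$-basis realizing the same length profile, is reduced. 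I would phrase the argument to avoid the subtlety altogether by proving $(1)\Leftrightarrow(2)$ first as above, then $(2)\Leftrightarrow(3)$ using that $c_\bb$ is the canonical candidate isometry and that any other isometry differs from it by an automorphism of the target (whose existence and triangularity come from the isometry-group description promised in Section 1), so that one is an isometry iff the other is.
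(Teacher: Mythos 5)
Your treatment of $(1)\Leftrightarrow(2)$ and of $(1)\Rightarrow(3)$ is exactly the paper's: the author simply observes that both (2) and the fact that $c_\bb|_L$ is an isometry onto $\oo(r_1)\perp\cdots\perp\oo(r_n)$ are verbatim reformulations of Definition \ref{defred1}, which is what your norm computation $\v{(a_1,\dots,a_n)}=\max_i\{|a_i|+r_i\}=\max_i\{\v{a_ib_i}\}$ spells out. You are also right that, read literally as an existence statement, item (3) is not a tautological reformulation, and the paper glosses over this by implicitly reading (3) as ``the restriction of $c_\bb$ to $L$ is an isometry.'' So your instinct that $(3)\Rightarrow(1)$ is the only point of friction is sound.

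However, neither of your two proposed patches is airtight as written. The second one (``any other isometry differs from $c_\bb$ by an automorphism of the target'') is both circular and question-begging: the composite $\psi\circ(c_\bb|_L)^{-1}$ is a priori only an $A$-module automorphism of $A^n$, and knowing it is an \emph{isometry} of $\oo(r_1)\perp\cdots\perp\oo(r_n)$ is equivalent to knowing $c_\bb|_L$ is one, which is what you are trying to prove; moreover, the isometry-group description (Lemma \ref{transition}) is itself proved in the paper using the present lemma. The first patch is essentially correct but hides the key step: from an isometry $\psi$ you get that $\psi(\bb)$ is an $A$-basis of $\oo(r_1)\perp\cdots\perp\oo(r_n)$ whose lengths are $r_1\le\cdots\le r_n$, i.e.\ they attain the successive minima of that lattice; the assertion that such a family is automatically reduced is precisely Theorem \ref{sucmintored}, which you use without naming or proving. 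Since Theorem \ref{sucmintored} is established later in the paper without appeal to Lemma \ref{cbisiso}, a forward reference is legitimate, but it should be made explicit --- as it stands, the claim ``being the $\psi$-preimage of some ordered $A$-basis realizing the same length profile, $\bb$ is reduced'' is exactly the nontrivial content you need to justify.
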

\begin{proof}
The fact that $c_\bb$ is an isometry is a reformulation of Definition \ref{defred1}. Also, the fact that the $A$-isomorphism $L\simeq A^n$ obtained by restricting $c_\bb$ to $L$ is an isometry between $L$ and $\oo(r_1)\perp\cdots\perp\oo(r_n)$ is a reformulation of Definition \ref{defred1} too. 
\end{proof}

\begin{proposition}\label{basic}
Let $\bb=(b_1,\dots,b_n)\in E^n$ be a reduced basis of $E$ with 
$$
r_1:=\v{b_1}\le\cdots\le r_n:=\v{b_n}.
$$ 
Let $L=\gen{\bb}_A$ be the lattice generated by $\bb$. Then,
\begin{enumerate}
\item $\v{E}:=\{\v{x}\mid x\in E\setminus\{0\}\}=(r_1+\Z)\cup\cdots\cup (r_n+\Z)$.

This set induces a finite subset of $\R/\Z$ called the \emph{signature} of $E$:
$$
\mathrm{Sig}(E):=\v{E}/\Z=\left\{r_1+\Z,\dots,r_n+\Z\right\}\subset \R/\Z.
$$

\item $r_1\le\cdots\le r_n$ are the successive minima of $L$.
\item For any $r\in \R$, the following family is a $k$-basis of $L_{\leq r}$:
$$
\{b_it^{j_i}\mid 1\leq i\leq n,\quad 0\leq j_i\leq \lfloor r-r_i\rfloor\}.
$$
In particular, take $r_0=-\infty$, $r_{n+1}=\infty$ and let $0\le \kappa\le n$ be the index for which $r_\kappa\le r<r_{\kappa+1}$. Then,
$$
\dim_k L_{\le r}=\sum_{i=1}^\kappa(\lfloor r-r_i\rfloor+1).
$$
\end{enumerate}
\end{proposition}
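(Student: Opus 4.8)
The plan is to exploit the structural characterization from Lemma \ref{cbisiso}: since $\bb$ is reduced, the coordinate map $c_\bb$ is an isometry $E \iso \kk(r_1)\perp\cdots\perp\kk(r_n)$ restricting to an isometry $L \iso \oo(r_1)\perp\cdots\perp\oo(r_n)$. All three assertions are invariant under isometry, so it suffices to prove them for the model lattice $\oo(r_1)\perp\cdots\perp\oo(r_n)$ sitting inside $\kk(r_1)\perp\cdots\perp\kk(r_n)$, where everything is explicit.

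For part (1): a nonzero $x \in E$ corresponds to $(a_1,\dots,a_n) \in K^n$, not all zero, with $\v{x} = \max_i\{|a_i| + r_i\}$ by reducedness. Since $|a_i|$ ranges over $\Z$ (as $a_i$ ranges over $K^*$) and we may take any subset of the coordinates to be zero, the set of achievable values of $\v{x}$ is exactly $\bigcup_{i=1}^n (r_i + \Z)$; one inclusion is the displayed formula for $\v{x}$, the other comes from taking $x = b_i t^j$. The statement about $\mathrm{Sig}(E)$ is then just the definition, reducing modulo $\Z$.

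For part (2): I would compare $r_1 \le \cdots \le r_n$ with the successive minima as defined in Lemma \ref{ri}. Since $b_1,\dots,b_i$ are $A$-linearly independent (Lemma \ref{product}(1)) with $\max\{\v{b_1},\dots,\v{b_i}\} = r_i$, we get $r_i \ge \inf(\rr_i)$ in the notation of Lemma \ref{ri}, i.e. the $i$-th successive minimum is $\le r_i$. For the reverse inequality, suppose $x_1,\dots,x_i \in L$ are $A$-linearly independent; writing each $x_j$ in coordinates and using that $c_\bb|_L$ is an isometry onto $\oo(r_1)\perp\cdots\perp\oo(r_n)$, a pigeonhole/linear-algebra argument on the supports of the coordinate vectors forces $\max_j \v{x_j} \ge r_i$: the $i$ independent vectors cannot all have all their support confined to the first $i-1$ coordinates, so some $x_j$ has a nonzero coordinate in position $\ge i$, hence $\v{x_j} \ge r_i$. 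Thus $\inf(\rr_i) = r_i$.

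For part (3): working in $\oo(r_1)\perp\cdots\perp\oo(r_n)$, an element lies in $L_{\le r}$ iff each coordinate $a_i \in A$ satisfies $|a_i| + r_i \le r$, i.e. $\deg a_i \le \lfloor r - r_i \rfloor$ (and $a_i = 0$ is forced when $r < r_i$). The monomials $t^{j_i}$ with $0 \le j_i \le \lfloor r-r_i\rfloor$ form a $k$-basis of $\{a_i \in A \mid \deg a_i \le \lfloor r - r_i\rfloor\}$, so pulling back via $b_i \mapsto e_i$ gives the asserted $k$-basis of $L_{\le r}$; the dimension count follows by summing, noting the $i$-th block contributes $\lfloor r - r_i \rfloor + 1$ exactly when $r_i \le r$, i.e. for $i \le \kappa$. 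The main obstacle is the reverse inequality in part (2) — making the support/independence argument rigorous — but this is a short finite-dimensional linear algebra observation once one passes to the orthogonal-sum model, so I do not expect genuine difficulty; the rest is bookkeeping with the isometry of Lemma \ref{cbisiso}.
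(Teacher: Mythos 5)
Your proposal is correct and follows essentially the same route as the paper: passing to the model $\oo(r_1)\perp\cdots\perp\oo(r_n)$ via Lemma \ref{cbisiso} is just the definition of reducedness in coordinates, and all three computations (the formula $\v{x}=\max_i\{|a_i|+r_i\}$ for part (1), the observation that vectors of length $<r_i$ have coordinates supported in the first $i-1$ positions so that $i$ independent vectors cannot all be that short for part (2), and the coordinatewise degree bound for part (3)) coincide with the paper's. The "main obstacle" you flag in part (2) is handled exactly as you sketch — $i$ $A$-linearly independent elements cannot lie in the rank-$(i-1)$ module $\gen{b_1,\dots,b_{i-1}}_A$ — so there is no gap.
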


\begin{proof} The length of any nonzero vector $x=\sum_{i=1}^n a_ib_i\in E$ is of the form
$$
\v{x}=\max_{1\leq i\leq n}\{\v{a_ib_i}\}=|a_j|+\v{b_j}=|a_j|+r_j\in r_j+\Z.
$$
This proves the first item.

For any $1\leq j\leq n$, the vectors $x=\sum_{i=1}^n a_ib_i\in L$ satisfying 
 $$
 \v{b_j}>\v{x}=\max_{1\leq i\leq n}\{\v{a_ib_i}\}
 $$
lie necessarily in the submodule $\gen{b_1,\dots,b_{j-1}}_A$. Hence, for any $A$-linearly independent family $x_1,\dots,x_j\in L$, we know that
$$
\max\{\v{x_1},\dots,\v{x_j}\}\geq \v{b_j}.
$$ 
This proves the second item.

For the last statement, the element $x=\sum_{i=1}^n a_ib_i$ belongs to $L_{\leq r}$ if and only if
$$
\v{x}=\max_{1\leq i\leq n}\{\v{a_ib_i}\}\leq r.
$$
This is equivalent to $a_{\kappa+1}=\cdots=a_{n}=0$ and 
$$
|a_i|\leq r-r_i,\qquad 1\leq i\leq \kappa.
$$
The subset of all polynomials $a\in A$ satisfying $|a|\leq r-r_i$ is a $k$-vector subspace with basis $1,t,\dots,t^{\lfloor r-r_i\rfloor}$. This ends the proof of the last item.
\end{proof}

The following observation is a direct consequence of item (\textit{1}) of Proposition \ref{basic}.

\begin{corollary}\label{discreteintervall}
For any real numbers $r<s$, the set $\v{E}\cap [r,s]$ is finite.
\end{corollary}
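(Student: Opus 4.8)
The plan is to reduce the statement immediately to the structural description of $\v{E}$ provided by item (\textit{1}) of Proposition \ref{basic}. Fixing a reduced basis $\bb=(b_1,\dots,b_n)$ of $E$ with lengths $r_1\le\cdots\le r_n$ (which exists by Lemma \ref{exist}, or rather is simply assumed given so that Proposition \ref{basic} applies), that item tells us $\v{E}=(r_1+\Z)\cup\cdots\cup(r_n+\Z)$. Intersecting with $[r,s]$ and distributing, one gets
$$
\v{E}\cap[r,s]=\bigcup_{i=1}^n\big((r_i+\Z)\cap[r,s]\big).
$$

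The key step is then the elementary observation that for each fixed $i$, the set $(r_i+\Z)\cap[r,s]$ is finite: an element of it has the form $r_i+m$ with $m\in\Z$ and $r-r_i\le m\le s-r_i$, so there are at most $\lfloor s-r\rfloor+1$ such integers $m$. Hence each of the $n$ pieces in the union above is finite, a finite union of finite sets is finite, and the conclusion follows. In fact this argument also yields the crude bound $\#\big(\v{E}\cap[r,s]\big)\le n(\lfloor s-r\rfloor+1)$, though the statement as phrased only asks for finiteness.

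There is essentially no obstacle here; the only thing to be careful about is that the corollary is stated for an arbitrary normed space $(E,\v{~})$, so one should note at the outset that $E$ admits a reduced basis in order to invoke Proposition \ref{basic}. This is guaranteed once reduced bases are known to exist (Lemma \ref{exist}); alternatively, if one does not yet wish to rely on that, the same finiteness can be extracted directly from the lattice axiom: choosing any lattice $L\subset E$ of full rank, the chain of inclusions $L_{\le r'}\subseteq L_{\le s'}$ together with $\dim_k L_{\le s'}<\infty$ forces $\v{L}\cap[r',s']$ to be finite for all $r'<s'$, and then $\v{E}=\v{L}+\tfrac1N\Z$ for a suitable denominator once one knows $\v{E}$ is contained in finitely many cosets of $\Z$ — but the one-line proof via Proposition \ref{basic} is cleaner and is the one I would present.
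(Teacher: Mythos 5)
Your proof is correct and is essentially the paper's own argument: the corollary is stated there as a direct consequence of item (\textit{1}) of Proposition \ref{basic}, since each coset $r_i+\Z$ meets a bounded interval in finitely many points. Your additional remarks (the explicit bound $n(\lfloor s-r\rfloor+1)$ and the care about the existence of a reduced basis) are harmless refinements of the same route.
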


The most relevant property of a reduced basis is that the lengths of the vectors attain the successive minima of the lattice generated by the basis. Actually, this property characterizes reduced bases. 

\begin{theorem}\label{sucmintored}
Let $(L,\v{~})$ be a lattice and $r_1\le\cdots \le r_n$ its successive minima. Let $\bb=(b_1,\dots,b_n)$ be a family of $A$-linearly independent elements in $L$ such that $\v{b_i}=r_i$, for all $1\le i\le n$. Then, $\bb$ is a reduced basis of $L$.
\end{theorem}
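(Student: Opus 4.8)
The plan is to compare the given family with an already‑available reduced basis. By Lemma \ref{exist}, $L$ admits a reduced basis $\bb'=(b_1',\dots,b_n')$, which we order by increasing length; by Proposition \ref{basic}(2) its lengths are exactly the successive minima, $\v{b_i'}=r_i$. Writing $b_i=\sum_j c_{ij}b_j'$ with $c_{ij}\in A$ and expanding $\v{b_i}$ through the reducedness of $\bb'$ gives $\max_j\{|c_{ij}|+r_j\}=\v{b_i}=r_i$, hence the basic degree bound
$$|c_{ij}|\le r_i-r_j\qquad\text{for all }i,j,$$
so in particular $c_{ij}=0$ whenever $r_j>r_i$.

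First I would show $C=(c_{ij})\in\mathrm{GL}_n(A)$, which already gives that $\bb$ is an $A$-basis of $L$, generating the same module as $\bb'$. In the Leibniz expansion of $\det C$, every nonzero monomial $\prod_i c_{i\sigma(i)}$ has degree $\le\sum_i(r_i-r_{\sigma(i)})=0$, so $\det C\in k$; and $\det C\neq 0$ because $\bb$ is $A$-linearly independent, hence a $K$-basis of $E=L\otimes_A K$, and $C$ is its change-of-basis matrix relative to $\bb'$. Thus $\det C\in k^*$ and $C^{-1}\in\mathrm{GL}_n(A)$. Applying the same degree count to the $(n-1)\times(n-1)$ minors of $C$ and dividing by $\det C$ (Cramer's rule) shows that $D:=C^{-1}=(d_{ji})$ obeys the dual bound $|d_{ji}|\le r_j-r_i$.

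It remains to verify (\ref{defred}) for arbitrary $a_1,\dots,a_n\in A$. Set $x=\sum_i a_ib_i$ and let $v=(v_1,\dots,v_n)$ with $v_j=\sum_i a_ic_{ij}$ be the coordinate vector of $x$ in $\bb'$; reducedness of $\bb'$ gives $\v{x}=\max_j\{|v_j|+r_j\}$. The inequality $\v{x}\le\max_i\{|a_i|+r_i\}=\max_i\v{a_ib_i}$ is immediate from properties (1) and (2) of a norm. For the reverse inequality I would use $a=vD$, i.e.\ $a_i=\sum_j v_jd_{ji}$: by the ultrametric law and the bound on $D$, $|a_i|\le\max_j\{|v_j|+|d_{ji}|\}\le\max_j\{|v_j|+r_j\}-r_i=\v{x}-r_i$, so $|a_i|+r_i\le\v{x}$ for every $i$, whence $\max_i\{|a_i|+r_i\}\le\v{x}$. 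Combining the two inequalities yields $\v{x}=\max_i\v{a_ib_i}$, which is (\ref{defred}); since $\bb$ is a basis of $L$, it is a reduced basis.

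The only genuinely delicate point is the symmetry between $C$ and its inverse: once one notices that the degree bound $|c_{ij}|\le r_i-r_j$ is inherited by the adjugate matrix — because in each cofactor the only indices surviving the cancellation of the $r$'s are the deleted row $i$ and column $j$ — both directions of the norm computation collapse to a single line of ultrametric estimation, and everything else is routine bookkeeping with the Leibniz formula. An equivalent, slightly more geometric route is to replace $\bb'$ by the isometry $L\simeq\oo(r_1)\perp\cdots\perp\oo(r_n)$ of Lemma \ref{cbisiso} and run the same matrix computation inside $A^n$.
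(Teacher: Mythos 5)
Your argument is circular within the logical structure of this paper. You begin by invoking the existence of a reduced basis $\bb'$ of $L$ (Corollary \ref{exist}), but that corollary is \emph{deduced from} Theorem \ref{sucmintored}: Lemma \ref{ri} only supplies $A$-linearly independent vectors attaining the successive minima, and it is precisely Theorem \ref{sucmintored} that upgrades such a family to a reduced basis. Nothing earlier in the paper establishes that any reduced basis exists; the reduction algorithm of Section \ref{reductionalgorithm} and the orthogonal-defect criterion it rests on also presuppose orthonormal (hence reduced) bases. The same objection applies to your alternative route via the isometry $L\simeq\oo(r_1)\perp\cdots\perp\oo(r_n)$ of Lemma \ref{cbisiso}, which again requires a reduced basis of $L$. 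So the key existence step of your proof assumes the statement being proved.

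Granting a reduced basis $\bb'$ with $\v{b'_i}=r_i$, the remaining computation is correct: the bound $|c_{ij}|\le r_i-r_j$, the conclusion $\det C\in k^*$ (hence $\langle\bb\rangle_A=L$), the dual bound $|d_{ji}|\le r_j-r_i$ on the inverse via the adjugate, and the two ultrametric estimates do yield condition (\ref{defred}). But to turn this into a proof you must first establish the existence of a reduced basis without appealing to the theorem — which is essentially what the paper's direct argument does. It proves reducedness of $\bb$ by induction on $n$: assuming a cancellation $\v{u+a_nb_n}<\v{u}=\v{a_nb_n}$, it extracts the leading terms $u_0=\sum_{i\in I}\lambda_it^{d_i}b_i$ and produces a vector $b=t^{-d_n}u_0\in L$ of length $<r_n$ that is independent of $b_1,\dots,b_{n-1}$, contradicting the minimality of $r_n$; it then shows $\langle\bb\rangle_A=L$ by a second contradiction with the minimality of some $r_j$. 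Either adopt that direct route, or supply an independent existence proof for reduced bases before running your change-of-basis computation.
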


\begin{proof}
Let us first show that $\bb$ is reduced by induction on $n$. Reduceness being obvious for $n=1$, assume the statement holds for lattices of rank $n-1$.

Take $a_1,\dots,a_n\in A$ and set $u=a_1b_1+\cdots+a_{n-1}b_{n-1}$. We want to show that
$$
\v{u+a_nb_n} =\max\{\v{u},\v{a_nb_n}\},
$$
since by induction hypothesis it holds $\v{u}=\max_{1\leq i<n}\{\v{a_ib_i}\}$. For $\v{u}\neq\v{a_nb_n}$ the statement follows from (\ref{ineq}).

Suppose $\v{u}=\v{a_nb_n}$ and $\v{u+a_nb_n} <\max\{\v{u},\v{a_nb_n}\}=\v{u}$. In particular, we have $a_n\neq 0$. We fix $I:=\{1\leq j\le n\mid \v{a_ib_i}=\v{u}\}$; note that $n\in I$ by our assumption. For $i\in I$ we write $a_i=\lambda_it^{d_i}+a'_i$, where  $\lambda_i\in k$ and $d_i=|a_i|>|a'_i|$. Then, if we take $u_0=\sum_{i\in I}\lambda_it^{d_i}b_i$, it holds
$u+a_nb_n=u_0+u'$ with $u'\in L$ having $\v{u'}<\v{u}$. Hence,
\begin{equation}\label{form}
\v{u_0}\le \max\{\v{u+a_nb_n},\v{u'}\}<\v{u}=|a_i|+r_i=d_i+r_i,\  \forall i\in I.
\end{equation}
Since $r_i\leq r_n$, we have $d_n\leq d_i$ for $i \in I$. By (\ref{form}), the element $b=t^{-d_n}u_0$ belongs to $L$ and has $\v{b}<r_n$. Since $b_1,\dots b_{n-1},b\in L$ are linearly independent, this contradicts the minimality of $r_n$. This proves that $\bb$ is reduced.

Finally, let us show that $\bb$ generates $L$. 
Assume there exists an element $b\in L$ with $b\notin\gen{\bb}_A$. Since $\bb$ is a $K$-basis of $E$, we obtain $b=\sum_{i=1}^na_ib_i$ with at least one $a_i\in K\setminus A$. We set $I=\{1\leq i\leq n\mid a_i\notin A\}$ and consider
\begin{align}\label{equ}
\sum_{i\in I}a_ib_i=b-\sum_{i\not\in I}a_ib_i\in L. 
\end{align}
As the set $\bb$ is reduced, it holds 
$$
\V{\sum_{i\in I}a_ib_i}=\max_{i\in I}\{\v{a_i b_i}\}=\v{a_jb_j},
$$
 for some $j\in I$. If $|a_j|\geq0$ we can write $a_j=a+a'_j$ with $a\in A$ and $a'_j\in \m_\infty$ and subtract $ab_j$ in (\ref{equ}) from both sides. Therefore, we can assume that $|a_j|<0$ and get $\v{{\sum_{i\in I}a_ib_i}}<\v{b_j}=r_j$. By setting $b'_j={\sum_{i\in I}a_ib_i}$, we obtain the set $\{b_1,\dots,b_{j-1},b'_j,b_{j+1},\dots,b_n\}$ of $A$-linearly independent elements in $L$. This is in contradiction with the minimality of $\v{b_j}=r_j$.
\end{proof}

From Lemma \ref{ri}, it follows easily the existence of linearly independent elements in a given lattice, whose length attains the successive minima. By Theorem \ref{sucmintored}, this guarantees the existence of reduced bases in any normed space.

\begin{corollary}\label{exist}
Every lattice admits a reduced basis.
\end{corollary}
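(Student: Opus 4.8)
The plan is to reduce the statement to Theorem \ref{sucmintored}. That theorem asserts that \emph{any} family of $A$-linearly independent vectors of a lattice whose lengths equal the successive minima is automatically a reduced basis; so, writing $r_1\le\cdots\le r_n$ for the successive minima of a rank-$n$ lattice $(L,\v{~})$ (they exist by Lemma \ref{ri}) and $E=L\otimes_AK$ for the ambient normed space, it suffices to produce $A$-linearly independent $b_1,\dots,b_n\in L$ with $\v{b_i}=r_i$ for every $i$. All the work lies in this construction, which I would carry out greedily, adjoining one vector at a time.

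So assume $b_1,\dots,b_{i-1}$ have been chosen (the empty family when $i=1$), and let $W\subseteq E$ be their $K$-span, of dimension $i-1$. The first sub-step is an auxiliary claim: every $z\in L$ with $z\notin W$ satisfies $\v{z}\ge r_i$. To prove it I would set $\ell$ to be the number of indices $j\le i-1$ with $r_j\le\v{z}$; since the $r_j$ are non-decreasing these are exactly $j=1,\dots,\ell$, and since $z\notin W$ lies a fortiori outside the span of $b_1,\dots,b_\ell$, the vectors $b_1,\dots,b_\ell,z$ are $A$-linearly independent of maximal length $\v{z}$, so $\v{z}\ge r_{\ell+1}$ by the definition of the successive minima. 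If $\ell\le i-2$ the choice of $\ell$ forces $r_{\ell+1}>\v{z}$, a contradiction; hence $\ell=i-1$, and repeating the argument with $b_1,\dots,b_{i-1},z$ yields $\v{z}\ge r_i$. The second sub-step uses Lemma \ref{ri} again: $r_i$ is attained, so there are $A$-linearly independent $x_1,\dots,x_i\in L$ with $\max_{1\le j\le i}\v{x_j}=r_i$, whence $\v{x_j}\le r_i$ for all $j$; since their span has dimension $i>\dim_K W$, some $x_{j_0}$ lies outside $W$, and the auxiliary claim then pins $\v{x_{j_0}}$ down to $r_i$. Setting $b_i:=x_{j_0}$ extends the family, and after $n$ steps Theorem \ref{sucmintored} shows $\{b_1,\dots,b_n\}$ is a reduced basis of $L$.

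The only place that demands real care is the auxiliary claim, and the reason is the possible presence of \emph{ties} among the successive minima: for $z\notin W$ one obtains $\v{z}\le r_i$ for free from Lemma \ref{ri}, but if $r_{i-1}=r_i$ there is a priori no reason to exclude $\v{z}<r_i$, so a bare ``take any short vector outside $W$'' argument is not enough — the bookkeeping with the index $\ell$ is exactly what rules this out. Everything else is already available: $L$ is free and embeds in $E$, the spaces $L_{\le r}$ are finite dimensional over $k$ by the lattice hypothesis, and the sharpened triangle inequality (\ref{ineq}) is at hand; no further input is needed.
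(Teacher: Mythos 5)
Your proposal is correct and takes exactly the paper's route: the paper also deduces the corollary by combining Lemma \ref{ri} with Theorem \ref{sucmintored}, merely asserting that linearly independent vectors attaining the successive minima ``easily'' exist. Your greedy construction (with the index-$\ell$ bookkeeping to handle ties among the $r_i$) is a correct, fully detailed version of that omitted step.
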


\subsection{Reduceness criteria} \label{secredcri}
In this section we define a reduction map, which leads to a practical criterion to check wether a basis in a normed space $(E,\vv)$ is reduced or not. For any $r\in\R$ the subspaces $E_{\le r}\supset E_{<r}$
are $A_\infty$-submodules of $E$ such that $\m_\infty E_{\le r}\subset E_{<r}$. Their quotient, $$V_r:=E_{\le r}/E_{<r}$$ is a $k$-vector space, admitting a kind of \emph{reduction map}:
$$
\red_r\colon E_{\le r}\lra V_r,\quad x\mapsto x+E_{<r}.
$$ 
Clearly, $V_r$ is nonzero if and only if $r\in\v{E}$.  

\begin{definition}\label{subset}
For any $\bb\subset E\setminus\{0\}$ and $\rho\in\R/\Z$, we denote 
$$
\bb_\rho:=\{b\in\bb\mid \v{b}+\Z=\rho\}.
$$
\end{definition}

\begin{lemma}\label{pieces} 
Let $\bb=(b_1,\dots,b_n)$ be a basis of $E$, and let $\bb=\bigcup_{\rho\in\R/\Z}\bb_\rho$ be the partition determined by classifying all vectors in $\bb$ according to its length modulo $\Z$. Then, $\bb$ is reduced if and only if all subsets $\bb_\rho$ are reduced.
\end{lemma}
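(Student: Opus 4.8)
The plan is to prove the two implications separately, the forward one being trivial and the backward one being where the content lies.

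First, ``$\bb$ reduced $\Rightarrow$ each $\bb_\rho$ reduced'': this is immediate, because any subset of a reduced family is reduced. Indeed, the identity (\ref{defred}) for a subset $\bb'\subseteq\bb$ is a special case of (\ref{defred}) for $\bb$ itself, obtained by setting to $0$ all coefficients attached to the vectors of $\bb\setminus\bb'$; such terms have length $-\infty$ and so affect neither the left nor the right side.

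For the converse, I would assume every $\bb_\rho$ is reduced and verify (\ref{defred}) for coefficients $a_1,\dots,a_n\in A$ (this is enough by Definition \ref{defred1}); the case where all $a_i$ vanish is trivial, so fix a nontrivial family. For each $\rho\in\R/\Z$ with $\bb_\rho\neq\emptyset$, set $I_\rho=\{i\mid b_i\in\bb_\rho\}$ and $s_\rho=\sum_{i\in I_\rho}a_ib_i$. Since $\bb_\rho$ is reduced, $\v{s_\rho}=\max_{i\in I_\rho}\{\v{a_ib_i}\}$; in particular $s_\rho=0$ exactly when $a_i=0$ for all $i\in I_\rho$, so the nonzero $s_\rho$ jointly account for all nonzero terms $a_ib_i$. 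The key observation is then that the nonzero partial sums have pairwise distinct lengths: if $i\in I_\rho$ then $\v{a_ib_i}=|a_i|+\v{b_i}\equiv\rho\pmod\Z$, hence $\v{s_\rho}\equiv\rho\pmod\Z$, and distinct classes $\rho$ yield incongruent, hence unequal, real values. Because $\bb$ is finite, $\sum_{i=1}^n a_ib_i=\sum_\rho s_\rho$ is a finite sum of vectors with pairwise distinct lengths, so iterating (\ref{ineq}) (order the nonzero $s_\rho$ by strictly decreasing length and induct) gives
\begin{equation*}
\V{\sum_{i=1}^n a_ib_i}=\max_\rho\{\v{s_\rho}\}=\max_\rho\,\max_{i\in I_\rho}\{\v{a_ib_i}\}=\max_{1\le i\le n}\{\v{a_ib_i}\},
\end{equation*}
which is exactly reduceness of $\bb$.

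The only point needing any care is the bookkeeping modulo $\Z$: it is precisely what forces the lengths of the pieces $s_\rho$ to be pairwise distinct, and hence what lets the ``local'' reduceness of each $\bb_\rho$ assemble into the global reduceness of $\bb$. Beyond that, there is no real obstacle; the argument is a routine application of the ultrametric identity (\ref{ineq}) together with Definition \ref{defred1}.
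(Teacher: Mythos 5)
Your proof is correct and follows essentially the same route as the paper: the trivial direction is handled by noting subsets of reduced families are reduced, and the converse is obtained by splitting the sum into the pieces $s_\rho$ indexed by length classes modulo $\Z$, applying reduceness within each class, and using that the nonzero pieces have pairwise incongruent (hence distinct) lengths so the ultrametric identity (\ref{ineq}) applies. Your treatment is, if anything, slightly more careful than the paper's about the zero pieces and about iterating (\ref{ineq}) over a finite sum.
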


\begin{proof}
Any subset of a reduced family is reduced. Thus, we need only to show that $\bb$ is reduced if all $\bb_\rho$ are reduced.

Let $I=\{\rho\in\R/\Z\mid \bb_\rho\ne\emptyset\}$.
We have $E=\bigoplus_{\rho \in I}E_\rho$, where $E_\rho$ is the subspace of $E$ generated by $\bb_\rho$.
Take $a_1,\dots,a_n\in K$ and let $x=\sum_{i=1}^n a_ib_i$. This element splits as $x=\sum_{\rho\in I} x_\rho$, where $x_\rho=\sum_{b_i\in \bb_\rho}a_ib_i$. 
Since all values $\v{x_\rho}$ are different (because $\v{a_ib_i}\equiv \v{b_i}\bmod{\Z}$),
we have $\v{x}=\max_{\rho\in I}\{\v{x_\rho}\}$. On the other hand, since all $\bb_\rho$ are reduced, we have $\v{x_\rho}=\max_{b_i\in \bb_\rho}\{\v{a_ib_i}\}$. Thus, $\bb$ is reduced.    
\end{proof}

\begin{definition}\label{zc}
For $a\in A_\infty$, consider the series expansion of $a$ with respect to the local parameter $t^{-1}$ at $P_\infty$:
$$
a=\sum_{i=0}^\infty\lambda_i\,t^{-i},\quad \lambda_i\in k.
$$
We define the \emph{zero coefficient} of $a$ as $\zc(a)=\lambda_0\in k$. It is uniquely determined by the condition $|a-\zc(a)|<0$. Clearly, $\zc(a)=0$ if and only if $a\in\m_\infty$.
\end{definition}

The next result is inspired by a criterion of W.M. Schmidt \cite{Sch,Schoe}, which was developed in the context of Puiseux expansions of functions in function fields.

\begin{theorem}\label{SS} 
Let $\bb$ be a basis of $E$, and let $\bb=\bigcup_{\rho\in \R/\Z}\bb_\rho$ be the partition determined by classifying all vectors in $\bb$ according to its length modulo $\Z$. For each $\bb_\rho\ne\emptyset$, choose a real number $r\in\rho$, and write
$$
\v{b}=r-m_b,\quad m_b\in\Z,\quad \mbox{for all }b\in \bb_\rho.
$$
Then, $\bb$ is reduced if and only if the elements $\{\red_r(t^{m_b}b)\mid b\in\bb_\rho\}\subset V_r$ are $k$-linearly independent for all $\bb_\rho\ne\emptyset$.
\end{theorem}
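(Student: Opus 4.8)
By Lemma~\ref{pieces} the claim reduces to a single length class: it suffices to show that, for a fixed $\rho$ with $\bb_\rho\ne\emptyset$ and a fixed choice $r\in\rho$, the family $\bb_\rho$ is reduced in $E$ if and only if the vectors $\red_r(t^{m_b}b)$, for $b\in\bb_\rho$, are $k$-linearly independent in $V_r$. So from now on I would write $\bb_\rho=\{b_1,\dots,b_m\}$ with $\v{b_i}=r-m_i$, and put $b_i':=t^{m_i}b_i$, so that $\v{b_i'}=r$ and all the $b_i'$ live in $E_{\le r}$. By Lemma~\ref{product}(2), $\{b_1',\dots,b_m'\}$ is reduced if and only if $\bb_\rho$ is, so I may as well replace $\bb_\rho$ by $\{b_1',\dots,b_m'\}$ and assume from the start that every vector in the family has length exactly $r$. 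The statement to prove becomes: $\{b_1',\dots,b_m'\}$ is reduced $\iff$ $\red_r(b_1'),\dots,\red_r(b_m')$ are $k$-linearly independent in $V_r$.

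The plan is to prove both implications by manipulating a relation $x=\sum_i a_i b_i'$ with $a_i\in K$ (or, by the last sentence of Definition~\ref{defred1}, with $a_i\in A$; since every $b_i'$ now has the same length modulo $\Z$, a common denominator argument lets me even assume each $a_i\in A_\infty$, i.e.\ $|a_i|\le 0$, after clearing a power of $t$ — this normalization is the technical crux). Write $a_i=\lambda_i+a_i'$ with $\lambda_i=\zc(a_i)\in k$ and $a_i'\in\m_\infty$, so $\v{a_i'b_i'}<r$ and hence $\sum_i a_i'b_i'\in E_{<r}$. Then $x-\sum_i a_i'b_i'=\sum_i\lambda_i b_i'$, and applying $\red_r$ (when $x\in E_{\le r}$) gives $\red_r(x)=\sum_i\lambda_i\,\red_r(b_i')$ in $V_r$. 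For the ``reduced $\Rightarrow$ independent'' direction: a $k$-linear dependence $\sum_i\lambda_i\red_r(b_i')=0$ with not all $\lambda_i=0$ means $\sum_i\lambda_i b_i'\in E_{<r}$, i.e.\ $\v{\sum_i\lambda_i b_i'}<r=\max_i\{\v{\lambda_i b_i'}\}$ (the max being $r$ since some $\lambda_i\ne0$ and $|\lambda_i|=0$), contradicting reduceness. For the converse, ``independent $\Rightarrow$ reduced'': given $a_i\in A$ not all zero, I must show $\v{\sum a_i b_i'}=\max_i\{\v{a_ib_i'}\}=\max_i\{|a_i|\}+r$. After dividing by $t^{d}$ with $d=\max_i|a_i|$, I reduce to $a_i\in A_\infty$ with $\max_i|a_i|=0$, i.e.\ some $\lambda_i=\zc(a_i)\ne0$; then $\sum a_i b_i'\equiv\sum\lambda_i b_i'\pmod{E_{<r}}$ and $\red_r(\sum a_i b_i')=\sum\lambda_i\red_r(b_i')\ne0$ by independence, so $\sum a_i b_i'\notin E_{<r}$, i.e.\ $\v{\sum a_i b_i'}\ge r$; combined with the ultrametric bound $\v{\sum a_i b_i'}\le\max_i\{\v{a_ib_i'}\}=r$, we get equality, which is exactly the reduceness identity after scaling back by $t^d$.

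The main obstacle I anticipate is bookkeeping the reduction to the ``all $a_i\in A_\infty$, $\max|a_i|=0$'' normalization cleanly: one must check that $\red_r$ is well defined on $E_{\le r}$, that $\m_\infty E_{\le r}\subset E_{<r}$ (stated in the text, so available), and that scaling the whole relation by a power of $t$ is harmless because $|~|$ is a norm (property (2) of Definition~\ref{norm}) — so $\v{tx}=\v{x}+1$ and both sides of the reduceness identity shift by the same integer. A secondary point is to make sure the equivalence ``suffices to check \eqref{defred} for $a_i\in A$'' from Definition~\ref{defred1} is legitimately invoked at the start of the converse direction; once that is granted, everything else is the short ultrametric computation sketched above, and there is no genuine difficulty.
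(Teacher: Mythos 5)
Your proposal is correct and follows essentially the same route as the paper: reduce to a single length class via Lemma~\ref{pieces}, normalize to $\v{b}=r$ via Lemma~\ref{product}, scale the coefficients by a power of $t$ so that $\max|a_i|=0$, and pass to zero coefficients in $V_r$ — the paper merely packages your two directions as a single chain of equivalences. No gaps.
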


\begin{proof}
By Lemma \ref{pieces} we can assume that all elements in $\bb$ have the same length modulo $\Z$. Thus, $I=\{\rho\}$ contains a single element and $\v{t^{m_b}b}=r$ for all $b\in \bb$.

By Lemma \ref{product}, $\bb$ is reduced if and only if $\{t^{m_b}b\mid b\in\bb\}$ is reduced. Thus, we may assume that $\v{b}=r$ for all $b\in \bb$.

Let $(a_b)_{b\in\bb}$ a family of elements in $K$, not all of them equal to zero. By multiplying these elements by the same (adequate) power of $t$ we may assume that $\max\{|a_b|\}=0$, so that $\max\{\v{a_bb}\}=r$. Let $\cc=\{b\in\bb\mid |a_b|=0\}$. Clearly,
\begin{align*}
\V{\sum_{b\in\bb}a_bb}=r \sii & 
\V{\sum_{b\in\cc}a_bb}=r \sii
\V{\sum_{b\in\cc}\zc(a_b)b}=r \\\sii &
\red_r\left(\sum_{b\in\cc}\zc(a_b)b\right)\ne0\sii
\sum_{b\in\cc}\zc(a_b)\red_r\left(b\right)\ne0.
\end{align*}
Hence, the condition (\ref{defred}) of reduceness, for all families $(a_b)_{b\in\bb}$ in $K$, is equivalent to $\{\red_r(b)\mid b\in\bb\}$ being $k$-linearly independent.  
\end{proof}

\begin{corollary}\label{basisV} 
With the above notation, $(\red_r(t^{m_b}b)\mid b\in\bb_\rho)$ is a $k$-basis of $V_r$. In particular, it holds $\dim_k V_r=\#\bb_\rho$.
\end{corollary}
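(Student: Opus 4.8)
The plan is to deduce Corollary \ref{basisV} directly from Theorem \ref{SS} together with the dimension count in Corollary \ref{discreteintervall} (or rather, from Proposition \ref{basic}), using the fact that every normed space admits a reduced basis (Corollary \ref{exist}). First I would observe that by Theorem \ref{SS}, since $\bb$ is a reduced basis of $E$, the family $\{\red_r(t^{m_b}b)\mid b\in\bb_\rho\}$ is $k$-linearly independent in $V_r$. So the only thing left is to show that this family spans $V_r$, equivalently that $\dim_k V_r\le \#\bb_\rho$; combined with linear independence this forces equality and gives the basis claim and the dimension formula simultaneously.

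For the spanning part, I would argue as follows. By Lemma \ref{pieces} we may reduce to the case where $I=\{\rho\}$ is a single class, and by Lemma \ref{product} we may further rescale and assume $\v{b}=r$ for all $b\in\bb=\bb_\rho$, so that $\#\bb_\rho=n=\dim_K E$. Now take any $y\in V_r$ and lift it to some $x\in E_{\le r}$ with $\red_r(x)=y$. Writing $x=\sum_{b\in\bb}a_bb$ in the reduced basis, reduceness gives $\v{x}=\max_b\{\v{a_bb}\}=\max_b\{|a_b|\}+r\le r$, hence $|a_b|\le 0$ for all $b$, i.e. $a_b\in A_\infty$. Then $x-\sum_b\zc(a_b)b$ has length $<r$ by the very characterization of $\zc$ (Definition \ref{zc}) together with (\ref{ineq}), so $\red_r(x)=\sum_b\zc(a_b)\red_r(b)$. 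This exhibits $y$ as a $k$-linear combination of $\{\red_r(b)\mid b\in\bb\}$, proving that these elements span $V_r$.

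Putting the two halves together: the set $\{\red_r(t^{m_b}b)\mid b\in\bb_\rho\}$ is linearly independent (Theorem \ref{SS}) and spanning (previous paragraph), hence a $k$-basis of $V_r$, and in particular $\dim_k V_r=\#\bb_\rho$. I do not expect any genuine obstacle here; the statement is essentially a bookkeeping corollary of Theorem \ref{SS}, and the only mild subtlety is being careful that the reductions via Lemma \ref{pieces} and Lemma \ref{product} are compatible with the twisting $b\mapsto t^{m_b}b$ and do not change the cardinality of $\bb_\rho$ or the space $V_r$ up to the obvious identification — which they do not, since twisting by a power of $t$ is an isometry onto the appropriate twist and $V_r$ only depends on $E$ and $r$. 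An alternative, even shorter route would be to invoke Proposition \ref{basic}(3) with the reduced basis $\bb$: the count there shows $\dim_k E_{\le r}-\dim_k E_{<r}$ equals the number of indices $i$ with $r_i\equiv r\bmod\Z$ and $r_i\le r$, which is exactly $\#\bb_\rho$ after the reduction to a single signature class; this gives the dimension equality immediately, and then linear independence from Theorem \ref{SS} upgrades it to a basis.
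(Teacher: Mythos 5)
Your main argument is correct and is essentially the paper's own proof: linear independence comes from Theorem \ref{SS}, and spanning is obtained by lifting a class in $V_r$, expanding the lift in the reduced basis, using reduceness to force $|a_b|\le 0$ on the relevant coefficients, and subtracting $\sum_b\zc(a_b)b$ to land in $E_{<r}$ (the paper does not even pass to a single signature class, but simply notes that the terms from other classes automatically fall into $E_{<r}$). The only misstep is your parenthetical ``alternative route'': $E_{\le r}$ and $E_{<r}$ are infinite-dimensional over $k$, so $\dim_k V_r$ cannot be read off as their difference, and Proposition \ref{basic}(3) counts dimensions of $L_{\le r}$ for the lattice $L$, not of $E_{\le r}$.
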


\begin{proof}
By the previous theorem, this family is $k$-linearly independent. Let us show that it generates $V_r$ as well. As in the proof of the theorem, we may assume that $\v{b}=r$ for all $b\in\bb_\rho$. 

Suppose $x\in E$ has $\v{x}=r$, and write it as $x=\sum_{b\in\bb}a_bb$, for some $a_b\in K$. By reduceness, we deduce
$$
r=\v{x}=\max_{b\in\bb}\{\v{a_bb}\}=\max_{b\in\bb_\rho}\{\v{a_bb}\}=r+\max_{b\in\bb_\rho}\{|a_b|\}.
$$
Hence, $|a_b|\leq 0$ for all $b\in \bb_\rho$ and $\cc:=\{b\in\bb_\rho\mid |a_b|=0\}\neq \emptyset$. Clearly,
$x\in \sum_{b\in\cc}\zc(a_b)b+E_{<r}$, and $\red_r(x)$ is a $k$-linear combination of $\{\red_r(b)\mid b\in\cc\}$.
\end{proof}

\noindent{ \bf Notation. }
Let $L$ be a lattice of rank $n$, and $r_1\le\cdots \le r_n$ its successive minima. We denote by
$$
\sm(L)=(r_1,\dots,r_n),\quad \smb(L)=\{r_1+\Z,\dots,r_n+\Z\},
$$
the vector of successive minima of $L$ and the multiset formed by their classes in $\R/\Z$, with due count of multiplicities. 

\begin{lemma}\label{multi}
All lattices $L\subset E$ in a normed space have the same multiset $\smb(L)$. 
We denote by $\sm(E)$ this common multiset
\end{lemma}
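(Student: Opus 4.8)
The plan is to reduce the statement to a comparison between two lattices $L, L' \subset E$ of full rank, using the characterization of successive minima available from Lemma \ref{ri} together with the dimension formula for $L_{\le r}$ from Proposition \ref{basic}. First I would recall that $\sm(E)$ is meant to be well-defined, so the task is precisely to show $\sm(L) = \sm(L')$ as vectors whenever $L, L'$ are full-rank $A$-submodules of $E$; the passage to $\smb(L)$ is then immediate. By Corollary \ref{exist} each lattice has a reduced basis, so fix a reduced basis $\bb = (b_1,\dots,b_n)$ of $L$ ordered by increasing length $r_1 \le \cdots \le r_n$, and similarly a reduced basis $\bb' = (b'_1,\dots,b'_n)$ of $L'$ with lengths $r'_1 \le \cdots \le r'_n$. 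By Proposition \ref{basic}(2), the $r_i$ are the successive minima of $L$ and the $r'_i$ those of $L'$.

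The key step is to compare the counting functions $d_L(r) := \dim_k L_{\le r}$ and $d_{L'}(r) := \dim_k L'_{\le r}$. These differ, but only by a bounded amount: since $L$ and $L'$ are both full-rank $A$-submodules of $E$, there exist nonzero $a, a' \in A$ with $a L' \subset L$ and $a' L \subset L'$. From $\v{a x} = |a| + \v{x}$ we get $(aL')_{\le r} = a(L'_{\le r - |a|})$, so $d_{L'}(r - |a|) \le d_L(r)$ and symmetrically $d_L(r - |a'|) \le d_{L'}(r)$. Hence $d_L(r) - d_{L'}(r)$ is bounded independently of $r$ (in fact by $n\max\{|a|,|a'|\}$, using that each of $d_L, d_{L'}$ increases by at most $n$ when $r$ increases by $1$, as follows from Proposition \ref{basic}(3)). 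On the other hand, Proposition \ref{basic}(3) gives the exact asymptotics: for $r$ large, $d_L(r) = nr + n - (r_1 + \cdots + r_n) + o(1)$ in the sense that $d_L(r) = \sum_{i=1}^n (\lfloor r - r_i \rfloor + 1)$. Comparing the two asymptotic expansions and using boundedness of the difference forces $\sum r_i = \sum r'_i$.

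To upgrade this to equality of the full vectors of successive minima, I would look one residue class at a time. Fix $\rho \in \R/\Z$ and a representative $r \in \rho$; by Corollary \ref{basisV}, $\dim_k V_r = \#\bb_\rho$ counts how many of the $r_i$ lie in $\rho$, and by Lemma \ref{multi}'s target statement we want this to agree with $\#\bb'_\rho$. But $V_r = E_{\le r}/E_{<r}$ depends only on the normed space $(E,\v{~})$, not on the chosen lattice; so $\#\bb_\rho = \dim_k V_r = \#\bb'_\rho$. This already yields $\smb(L) = \smb(L')$ directly, which is all that Lemma \ref{multi} asserts. For the sharper claim $\sm(L) = \sm(L')$ one then argues within a single class: restricting to the subspace $E_\rho$ spanned by the vectors of length in $\rho$ (using the decomposition $E = \bigoplus_\rho E_\rho$ from Lemma \ref{pieces}), the successive minima in that class are determined by the counting function $d_L$ restricted to the arithmetic progression $r_0 + \Z$ for $r_0 \in \rho$, and the boundedness argument above combined with the exact formula of Proposition \ref{basic}(3) applied inside $E_\rho$ pins down each multiplicity, hence each value $r_i$ with $r_i + \Z = \rho$.

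I expect the main obstacle to be purely bookkeeping: organizing the residue-class decomposition so that the counting arguments run cleanly in each class, and making sure the asymptotic comparison of $d_L$ and $d_{L'}$ is set up so that the bounded-difference estimate actually forces termwise equality and not merely equality of the sums $\sum r_i$. In fact, for the statement as literally given — equality of the \emph{multisets} $\smb(L)$ — the clean route is simply the observation that $\#\bb_\rho = \dim_k V_r$ is intrinsic to $E$, so I would present that as the main line and relegate the stronger vector-equality (and its boundedness estimates) to a remark or a short addendum.
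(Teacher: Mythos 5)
Your main line---that for each $\rho\in\R/\Z$ the multiplicity of $\rho$ in $\smb(L)$ equals $\#\bb_\rho=\dim_k V_r$, which is intrinsic to $(E,\v{~})$ and not to the chosen lattice---is exactly the paper's proof, combined (as in the paper) with the observation from Proposition \ref{basic} that the underlying set of $\smb(L)$ is the signature of $E$. For the statement as written, your proposal is correct and takes the same route; you even identify this as the clean main line yourself.

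However, the ``sharper claim'' that you propose to relegate to a remark is false, and the supporting asymptotic argument is a non sequitur. For any lattice $L\subset E$, the module $tL$ is again a full-rank $A$-submodule of $E$, and its successive minima are those of $L$ shifted by $1$; hence $\sm(L)\ne\sm(tL)$ even though $\smb(L)=\smb(tL)$. Only the classes modulo $\Z$ (with multiplicity) are invariants of $E$. Correspondingly, boundedness of $d_L(r)-d_{L'}(r)$ in $r$ cannot force $\sum_i r_i=\sum_i r'_i$: the two sums are fixed real numbers, so their difference is trivially bounded, and for $L'=tL$ one has $\sum_i r'_i=\sum_i r_i+n\ne\sum_i r_i$. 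The volume of a lattice is genuinely lattice-dependent (cf.\ Section \ref{detod}), so drop the vector-equality addendum entirely and keep only the $\dim_k V_r$ argument.
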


\begin{proof}
By Corollary \ref{exist} there exists a reduced basis $\bb=(b_1,\dots,b_n)$ of any lattice $L$ in $E$. By Proposition \ref{basic}, the underlying set of $\smb(L)$ is the signature of $E$, which depends only on $E$. Finally, for each $\rho\in\R/\Z$, the multiplicity of $\rho$ as an element of the multiset $\smb(L)$ is the cardinality of the set $\bb_\rho$. By Corollary \ref{basisV} this multiplicity $\#\bb_\rho$ is also independent of $L$.
\end{proof}

\begin{corollary}\label{classification}
Two lattices $L$, $L'$ are isometric if and only if $\sm(L)=\sm(L')$.

Two normed spaces $E$, $E'$ are isometric if and only if $\sm(E)=\sm(E')$.
\end{corollary}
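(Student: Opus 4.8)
The plan is to prove the "if and only if" for lattices first, then derive the normed space version from it. The forward direction (isometric $\Rightarrow$ same $\sm$) is essentially immediate: an isometry $\varphi\colon L\to L'$ preserves lengths, hence carries an $A$-linearly independent family attaining the successive minima of $L$ (which exists by Lemma \ref{ri}) to one in $L'$ with the same lengths, so $\sm(L')\le \sm(L)$ componentwise; applying $\varphi^{-1}$ gives the reverse inequality, so $\sm(L)=\sm(L')$.

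For the converse, suppose $\sm(L)=\sm(L')=(r_1,\dots,r_n)$. By Corollary \ref{exist} (the existence of reduced bases), pick a reduced basis $\bb=(b_1,\dots,b_n)$ of $L$ and a reduced basis $\bb'=(b_1',\dots,b_n')$ of $L'$, ordered by increasing length. By Proposition \ref{basic}(2), the lengths of these basis vectors are exactly the successive minima, so $\v{b_i}=r_i=\v{b_i'}'$ for all $i$. Now invoke Lemma \ref{cbisiso}: both lattices are isometric to the same orthogonal sum $\oo(r_1)\perp\cdots\perp\oo(r_n)$, hence isometric to each other. (Concretely, the $A$-module isomorphism sending $b_i\mapsto b_i'$ is a length-preserving map by the reduceness identity $\v{\sum a_ib_i}=\max_i\{|a_i|+r_i\}=\v{\sum a_ib_i'}'$.)

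For normed spaces: if $E$ and $E'$ are isometric via $\psi$, choose a lattice $L\subset E$; then $\psi(L)$ is a full-rank finitely generated $A$-submodule of $E'$, hence a lattice, and $\psi\colon L\to\psi(L)$ is a lattice isometry, so by the lattice case $\sm(E)=\smb(L)=\smb(\psi(L))=\sm(E')$, using Lemma \ref{multi} to know these multisets are intrinsic to the ambient normed spaces. Conversely, if $\sm(E)=\sm(E')$, write this common multiset as $\{r_1+\Z,\dots,r_n+\Z\}$; by Proposition \ref{basic}(1) this is the signature, so we may pick reduced bases of $E$ and $E'$ whose lengths, taken modulo $\Z$, realize exactly these classes — and since we are free to multiply each basis vector of $\bb'$ by a suitable power of $t$ (Lemma \ref{product}(2) keeps it reduced), we can arrange $\v{b_i}=\v{b_i'}'$ for all $i$. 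Then the $K$-linear map $b_i\mapsto b_i'$ is an isometry $E\to E'$ by the reduceness identity, exactly as in the lattice case.

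**The main obstacle** is a bookkeeping point rather than a conceptual one: matching up the two reduced bases correctly, i.e. ensuring that the multiplicities with which each class $\rho\in\R/\Z$ occurs agree and that one can then choose representatives $r_i$ of these classes so that the actual real lengths coincide. For lattices this is handed to us — $\sm(L)=\sm(L')$ is literally the statement that the length \emph{vectors} agree — but for normed spaces only the multiset of \emph{classes} is given, and one must exploit the twisting freedom of Lemma \ref{product}(2) to upgrade agreement mod $\Z$ to agreement on the nose. Once that alignment is in place, Lemma \ref{cbisiso} (or equivalently a direct one-line check with the reduceness identity) closes both directions with no further work.
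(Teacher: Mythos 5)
Your proof is correct and follows essentially the same route as the paper: decompose each lattice (resp.\ normed space) via a reduced basis into an orthogonal sum of rank-one pieces $\oo(r_i)$ (resp.\ $\kk(r_i)$), use Lemma \ref{cbisiso}, and note that these pieces are isometric exactly when the $r_i$ agree (resp.\ agree mod $\Z$, which is where your twisting by powers of $t$ comes in). You merely spell out details — the intrinsic invariance of successive minima under isometry and the multiplicity bookkeeping via Lemma \ref{multi} — that the paper's terser proof leaves to the reader.
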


\begin{proof}
From $\aut_A(A)=k^*$ and $\aut_K(K)=K^*$, we deduce immediately:
\begin{align*}
\oo(r)\mbox{ isometric to } \oo(r')&\Longleftrightarrow r=r'\\
\kk(r)\mbox{ isometric to } \kk(r')&\Longleftrightarrow r+\Z=r'+\Z,
\end{align*}
for any given real numbers $r,r'\in\R$. The corollary follows from the existence of reduced bases and Lemma \ref{cbisiso}.
\end{proof}

\subsection{Orthonormal bases and isometry group}

\begin{definition}
Let $E$ be a normed space and $\bb=(b_1,\dots,b_n)$ a reduced basis of $E$. 
We say that $\bb$ is \emph{orthonormal} if $-1<\v{b_1}\leq \dots\leq\v{b_n}\leq 0$.
\end{definition}

Clearly, two orthonormal bases of the same normed space $E$ have the same multiset of lengths of their vectors. 

The aim of this section is to describe maps between normed spaces. In particular, we want to derive properties of the transition matrices between orthonormal bases. 
\begin{definition}
The set of all isometries $(E,\v{~})\rightarrow (E,\v{~})$ is denoted by\linebreak $\mathrm{Aut}(E,\v{~})$. This set has a natural group structure. We call it the \emph{isometry group} of the normed space $(E,\v{~})$.
\end{definition}

\begin{lemma}\label{mapsred}
Every morphism of normed spaces is injective and maps a reduced set to a reduced one.
\end{lemma}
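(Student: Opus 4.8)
The statement to prove is Lemma~\ref{mapsred}: every morphism of normed spaces is injective and maps a reduced set to a reduced one.

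\textbf{Plan.} Let $\varphi\colon (E,\vv)\to(E',\vv')$ be a morphism of normed spaces, i.e.\ a $K$-linear map satisfying $\v{\varphi(x)}'=\v{x}$ for all $x\in E$. The plan is to read off both assertions directly from the length-preserving property, with the reducedness half being essentially formal. First I would settle injectivity: if $\varphi(x)=0$ then $\v{x}=\v{\varphi(x)}'=\v{0}'=-\infty$, so $x=0$ by property~(3) of a norm in Definition~\ref{norm}. (One should note that a morphism of normed spaces, being a lattice homomorphism extended to the ambient spaces, is $K$-linear; this is implicit in the definitions, since a lattice homomorphism is $A$-linear and extends uniquely to the $K$-vector spaces $L\otimes_A K=E$.)

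For the second assertion, let $\bb=\{b_1,\dots,b_m\}\subset E\setminus\{0\}$ be reduced. By injectivity each $\varphi(b_i)\neq 0$, so $\varphi(\bb)=\{\varphi(b_1),\dots,\varphi(b_m)\}$ is a subset of $E'\setminus\{0\}$ and we may check Definition~\ref{defred1}. Take $a_1,\dots,a_m\in K$. Using $K$-linearity of $\varphi$ and the length-preserving identity twice,
\[
\V{a_1\varphi(b_1)+\cdots+a_m\varphi(b_m)}' = \V{\varphi(a_1b_1+\cdots+a_mb_m)}' = \v{a_1b_1+\cdots+a_mb_m},
\]
and since $\bb$ is reduced this equals $\max_i\{\v{a_ib_i}\}=\max_i\{\v{\varphi(a_ib_i)}'\}=\max_i\{\v{a_i\varphi(b_i)}'\}$, where the middle equality is again the defining property of $\varphi$. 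Hence $\varphi(\bb)$ satisfies~(\ref{defred}) and is reduced.

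\textbf{Main obstacle.} There is no real obstacle: both parts are immediate once one unwinds the definitions. The only point requiring a moment's care is the tacit claim that a morphism of normed spaces is $K$-linear and hence commutes with scalar multiplication by $K$ — this is what lets me pull $a_i$ through $\varphi$ and then out of $\varphi(a_ib_i)$. If the intended notion of ``morphism of normed spaces'' were only required to be $A$-linear on some lattice, one would first reduce the displayed computation to coefficients $a_i\in A$ (which suffices by the last sentence of Definition~\ref{defred1}) and then use $A$-linearity directly; either way the argument closes.
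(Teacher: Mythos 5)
Your proof is correct and follows the same route as the paper's (which simply notes that a length-preserving map has trivial kernel and "clearly preserves" the reducedness condition); you have merely written out the details, including the sensible remark about $K$-linearity versus $A$-linearity. No issues.
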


\begin{proof}
A length-preserving map is injective because it must have a trivial kernel. Also, it clearly preserves condition (\ref{defred}) from Definition \ref{defred1}. 
\end{proof}

\begin{lemma}\label{equivMap}
Let $(E,\v{~})$ and $(E',\v{~}')$ be normed spaces with $\sm(E)=\sm(E')$ and $\varphi:E\rightarrow E'$ be a $K$-linear map. Then, the following statements are equivalent:
\begin{enumerate}
\item The map $\varphi$ is an isometry.
\item The map $\varphi$ sends orthonormal bases of $E$ to orthonormal bases of $E'$.
\item The map $\varphi$ sends a fixed orthonormal basis of $E$ to an orthonormal basis of $E'$.
\end{enumerate}
\end{lemma}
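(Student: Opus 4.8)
The plan is to prove the cyclic chain of implications $(1)\Rightarrow(2)\Rightarrow(3)\Rightarrow(1)$. The implications $(1)\Rightarrow(2)$ and $(2)\Rightarrow(3)$ are essentially immediate, and the real content is $(3)\Rightarrow(1)$.

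\textbf{The easy implications.} For $(1)\Rightarrow(2)$: if $\varphi$ is an isometry and $\bb=(b_1,\dots,b_n)$ is an orthonormal basis of $E$, then by Lemma \ref{mapsred} the image $\varphi(\bb)$ is a reduced set; it is a $K$-basis of $E'$ since $\varphi$ is an isomorphism; and since $\varphi$ preserves lengths we have $-1<\v{\varphi(b_1)}'\le\cdots\le\v{\varphi(b_n)}'\le 0$, so $\varphi(\bb)$ is orthonormal. The implication $(2)\Rightarrow(3)$ is trivial once we note that $E$ admits at least one orthonormal basis: starting from a reduced basis of $E$ (which exists by Corollary \ref{exist}), multiply each vector $b_i$ by a suitable power of $t$ so that its length falls in the interval $(-1,0]$; by Lemma \ref{product}(2) the resulting set is still reduced, hence is an orthonormal basis.

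\textbf{The main implication $(3)\Rightarrow(1)$.} Fix an orthonormal basis $\bb=(b_1,\dots,b_n)$ of $E$ with $\varphi(\bb)=(\varphi(b_1),\dots,\varphi(b_n))$ an orthonormal basis of $E'$. The key point is that the lengths of the vectors of $\bb$ are \emph{forced}: by Proposition \ref{basic}(2) the lengths $\v{b_1}\le\cdots\le\v{b_n}$ of a reduced basis are the successive minima of the lattice $\gen{\bb}_A$, whose classes modulo $\Z$ form the multiset $\sm(E)$ by Lemma \ref{multi}; since each $\v{b_i}\in(-1,0]$, the representative in $(-1,0]$ of each class in $\sm(E)$ is uniquely determined, so $\v{b_i}$ depends only on $E$. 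The same argument applied to $E'$, together with the hypothesis $\sm(E)=\sm(E')$, shows $\v{\varphi(b_i)}'=\v{b_i}$ for every $i$. Now for arbitrary $a_1,\dots,a_n\in K$, using that both $\bb$ and $\varphi(\bb)$ are reduced (Definition \ref{defred1}) and $K$-linearity of $\varphi$,
\begin{equation*}
\V{\varphi\Big(\sum_i a_ib_i\Big)}'=\V{\sum_i a_i\varphi(b_i)}'=\max_i\{|a_i|+\v{\varphi(b_i)}'\}=\max_i\{|a_i|+\v{b_i}\}=\V{\sum_i a_ib_i},
\end{equation*}
so $\varphi$ is length-preserving on all of $E$, i.e. an isometry.

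\textbf{Expected obstacle.} No step is genuinely hard, but the one requiring care is the claim that $\varphi$ matches lengths vector-by-vector on the chosen basis. This is not automatic from $\sm(E)=\sm(E')$ alone for an arbitrary reduced basis; it works precisely because orthonormality pins down each length as the unique representative in $(-1,0]$ of a class in the common multiset, and because the vectors are listed in increasing order of length so that the $i$-th vector corresponds to the $i$-th element of the ordered multiset. Once this bookkeeping is in place, the rest is a direct application of reduceness and the second axiom of Definition \ref{norm}.
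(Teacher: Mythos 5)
Your proposal is correct and follows essentially the same route as the paper: the easy implications via Lemma \ref{mapsred}, and for $(3)\Rightarrow(1)$ the observation that $\sm(E)=\sm(E')$ together with orthonormality forces the two increasing length sequences in $(-1,0]$ to coincide term by term, after which reduceness of both bases gives the length-preserving computation. Your write-up is in fact slightly more explicit than the paper's about why the lengths are pinned down (unique representative in $(-1,0]$ of each class, plus the common ordering), which is exactly the point the paper asserts without elaboration.
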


\begin{proof}
The first statement implies the second one by Lemma \ref{mapsred}, and the second one implies trivially the third one.

We show that that that (3) implies (1). Since $\varphi$ maps an orthonormal basis $\bb=(b_1,\dots,b_n)$ of $E$ to an orthonormal basis $\varphi(\bb)$ of $E'$, the $K$-linear map $\varphi$ is an isomorphism. As $\sm(E)=\sm(E')$, the two sequences of lengths
$$
-1<\v{b_1}\leq \dots \leq \v{b_n}\leq 0,\quad -1<\v{\varphi(b_1)}'\leq \dots \leq \v{\varphi(b_n)}'\leq 0
$$
coincide. Therefore, for any $x\in E$ with $x=\sum_{i=1}^na_ib_i$, we have
$$
\v{x}=\max_{1\leq i\leq n}\{\v{a_i b_i}\}=\max_{1\leq i\leq n}\{\v{a_i \varphi(b_i)}'\}=\V{\sum_{i=1}^na_i\varphi(b_i)}'=\v{\varphi(x)}',
$$ 
so that $\varphi$ preserves lengths.
\end{proof}

\begin{theorem}\label{autgl}
For $r\in \R$ it holds $\mathrm{Aut}( \kk^n(r))=\gl_n(A_\infty)$. 
\end{theorem}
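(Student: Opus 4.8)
The plan is to show that $\mathrm{Aut}(\kk^n(r))$ consists exactly of those $K$-linear automorphisms of $K^n$ whose matrix lies in $\gl_n(A_\infty)$. First I would reduce to the case $r=0$: for any real $s$, the identity map on $K^n$ is an isometry $\kk^n(r)\to\kk^n(r+s)$ after twisting, so conjugation by this identification shows $\mathrm{Aut}(\kk^n(r))=\mathrm{Aut}(\kk^n)$ as subgroups of $\gl_n(K)$; more directly, twisting all lengths by the same constant $r$ does not change which linear maps preserve lengths, so without loss of generality $r=0$. In $\kk^n=\kk^n(0)$ the norm of a vector $x=(x_1,\dots,x_n)\in K^n$ is $\v{x}=\max_i|x_i|=\max_i(-v_\infty(x_i))$, and the standard basis $e_1,\dots,e_n$ is an orthonormal basis (each $\v{e_i}=0$).

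Next I would use Lemma \ref{equivMap}: since $\sm(\kk^n)=\sm(\kk^n)$ trivially, a $K$-linear map $\varphi:K^n\to K^n$ is an isometry if and only if it sends the fixed orthonormal basis $(e_1,\dots,e_n)$ to an orthonormal basis of $\kk^n$. So the task becomes: characterize the matrices $M\in\gl_n(K)$ whose columns $\varphi(e_1),\dots,\varphi(e_n)$ form an orthonormal basis, i.e. a reduced basis all of whose vectors have length in $(-1,0]$. Since every column $\varphi(e_j)$ has entries in $K$, having $\v{\varphi(e_j)}\le 0$ forces every entry of $M$ to have $|{\cdot}|\le 0$, hence to lie in $A_\infty$; thus $M\in M_n(A_\infty)$. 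Then I would observe that $M\in\gl_n(K)$ with entries in $A_\infty$ is invertible over $A_\infty$ exactly when $\det M\in U_\infty=A_\infty^*$, i.e. when $M\in\gl_n(A_\infty)$, and it remains to match this condition with ``the columns form a reduced basis with all lengths $>-1$.''

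The key computation is the reduceness criterion. Applying Theorem \ref{SS} with the single class $\rho=0+\Z$ and $r=0$: the columns $\varphi(e_1),\dots,\varphi(e_n)$, all of length $\le 0$ but possibly some of length $<0$, form a reduced basis iff, after scaling each column of length $<0$ is irrelevant — actually I must be careful, orthonormality requires each length to be exactly in $(-1,0]$, which for a vector in $A_\infty^n\setminus\m_\infty A_\infty^n$ just means the vector is nonzero mod $\m_\infty$. So reduceness of $(\varphi(e_j))_j$ (all of length $0$, say, after noting length $\le 0$ already, and we may pick $m_b=0$) is equivalent by Theorem \ref{SS} to the reductions $\red_0(\varphi(e_j))$ being $k$-linearly independent in $V_0=A_\infty^n/\m_\infty A_\infty^n\cong k^n$. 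But $\red_0(\varphi(e_j))$ is exactly the $j$-th column of $M$ reduced mod $\m_\infty$, i.e. the column of $\bar M\in M_n(k)$, where $\bar M=\zc(M)$ is the entrywise zero-coefficient. So the columns form an orthonormal basis iff $\bar M\in\gl_n(k)$, i.e. $\det\bar M\ne 0$; and since reduction mod $\m_\infty$ is a ring homomorphism $A_\infty\to k$, $\overline{\det M}=\det\bar M$, so this says $\det M\in U_\infty$, i.e. $M\in\gl_n(A_\infty)$. Conversely, any $M\in\gl_n(A_\infty)$ has columns in $A_\infty^n$ with nonzero reductions that are linearly independent over $k$, hence form an orthonormal basis, so $\varphi$ is an isometry by Lemma \ref{equivMap}.

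The main obstacle, and the place to be most careful, is the bookkeeping of lengths in the orthonormality condition: one must check that the columns of $M\in\gl_n(A_\infty)$ indeed all have length in $(-1,0]$ (they have length $\le 0$ since entries lie in $A_\infty$, and length $>-1$ since a column cannot be divisible by $t^{-1}$ in $A_\infty^n$ without the reduced matrix $\bar M$ having a zero column, contradicting $\det\bar M\ne 0$), and conversely that an isometry's image of the standard basis, being an orthonormal basis, cannot produce entries outside $A_\infty$. Once the identification $\red_0=\zc$ (entrywise) and the multiplicativity $\overline{\det M}=\det\bar M$ are in place, the equivalence $M\in\gl_n(A_\infty)\iff\det\bar M\ne 0\iff$ columns reduce to a $k$-basis is immediate, and Lemma \ref{equivMap} closes the argument in both directions.
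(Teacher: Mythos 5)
Your proof is correct and follows essentially the same route as the paper's: reduce to $r=0$, invoke Lemma \ref{equivMap} to translate the isometry condition into orthonormality of the image of the standard basis, observe that this forces entries in $A_\infty$, and then use Theorem \ref{SS} to identify reduceness with invertibility of the matrix modulo $\m_\infty$, i.e.\ $\det M\in U_\infty$. The only cosmetic difference is in handling general $r$: you note directly that adding a constant to all lengths does not change which maps preserve them, while the paper instead rescales the orthonormal basis by $t^{-\lceil r\rceil}$; both are fine, and your careful check that no column of $M\in\gl_n(A_\infty)$ can lie in $\m_\infty A_\infty^n$ fills in a step the paper leaves implicit.
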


\begin{proof}
Let us first take $r=0$. Recall that $\kk^n=(K^n,\v{~})$ with $\v{(a_1,\dots,a_n)}=\max_{1\leq i\leq n}\{|a_i|\}$. Let $(e_1,\dots, e_n)$ be the standard basis of $K^n$, which is an orthonormal basis of $\kk^n$. By Lemma \ref{equivMap}, for $T\in \gl_n(K)$ the map $T:K^n\rightarrow K^n$ is an isometry if and only if $(e_1T,\dots, e_nT)$ is an orthonormal basis. In particular, the rows of $T$ have length $0$, so that $T\in \gl_n(K)\cap A_\infty^{n\times n}$. By Theorem \ref{SS}, the rows of $T$ are reduced if and only if they  are linearly independent $\bmod\ \m_\infty^n$. Clearly, this holds if and only if $\det T \notin \m_\infty$. Thus, $T$ is an isometry if and only if $T\in\gl_n(A_\infty)$.

The general case follows from the same argument, having in mind that an orthonormal basis of $\kk^n(r)$ is $t^{-\lceil r\rceil}\bb$, where $\bb$ is the standard basis of $K^n$.
\end{proof}

\begin{definition}\label{oldgroup}
Let $n=m_1+\cdots+m_\kappa$ be a partition of a positive integer $n$ into a sum of positive integers. Let $T$ be an $n\times n$ matrix with entries in $A_\infty$. The partition of $n$ determines a decomposition of $T$ into blocks:
$$
T=(T_{ij}),\quad T_{ij}\in A_\infty^{m_i\times m_j},\ 1\le i,j\le \kappa.
$$  

The \emph{orthonormal group} $O(m_1,\dots,m_\kappa,A_\infty)$ is the subgroup of $\gl_n(A_\infty)$ formed by all $T\in A_\infty^{n\times n}$, which satisfy the following two conditions:
\begin{enumerate}
\item $T_{ii}\in\gl_{m_i}(A_\infty)$, for all $1\le i\le \kappa$.
\item $T_{ij}\in \m_\infty^{m_i\times m_j}$, for all $j> i$.
\end{enumerate}
\end{definition}

\begin{theorem}\label{helpi}
Let $-1<r_1<\dots < r_\kappa\leq 0$ be a sequence of real numbers. Then, for $m_1,\dots,m_\kappa\in\Z_{>0}$ it holds
$$
\mathrm{Aut}( \bot_{i=1}^\kappa \kk^{m_i}(r_i))=O(m_1,\dots,m_\kappa,A_\infty).
$$
\end{theorem}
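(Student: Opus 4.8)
The plan is to combine the description of $\mathrm{Aut}(\kk^n(r))$ from Theorem \ref{autgl} with the criterion for being an isometry in terms of orthonormal bases (Lemma \ref{equivMap}) and the reduceness criterion of Theorem \ref{SS}. Write $E=\bot_{i=1}^\kappa\kk^{m_i}(r_i)$ and $n=m_1+\cdots+m_\kappa$. Since $-1<r_1<\cdots<r_\kappa\le 0$, the standard basis vectors, suitably grouped, already form an orthonormal basis $\bb=(b_1,\dots,b_n)$ of $E$: the first $m_1$ vectors have length $r_1$, the next $m_2$ have length $r_2$, and so on. By Lemma \ref{equivMap} (using $\sm(E)=\sm(E)$ trivially), a $K$-linear map $T:E\to E$ is an isometry if and only if $T\bb$ is again an orthonormal basis of $E$, i.e. if and only if the rows of $T$ (with respect to $\bb$) form a reduced family whose lengths are again $r_1\le\cdots\le r_n$ with the same multiplicities.

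First I would extract the entrywise size conditions. The $i$-th block of rows of $T$ must consist of vectors of length $r_i$; writing $T=(T_{ij})$ in the block decomposition attached to the partition, the vector $b_iT$ has length $\max_j(|{\text{entries of }T_{ij}}|+r_j)$, and requiring this to equal $r_i$ forces, since $r_1<\cdots<r_\kappa$: the entries of $T_{ij}$ lie in $A_\infty$ for $j\ge i$ (so that the contribution $|{\cdot}|+r_j\le r_j\le r_i$ never exceeds $r_i$ only when $j\ge i$... more carefully) — the clean way is: entries of $T_{ij}$ must have degree $\le r_i-r_j$; for $j>i$ this forces $T_{ij}\in\m_\infty^{m_i\times m_j}$ since $r_i-r_j<0$; for $j=i$ it forces $T_{ii}\in A_\infty^{m_i\times m_i}$; for $j<i$ it only forces the entries to lie in $A_\infty$, which holds automatically as soon as we know $T$ has entries in $A_\infty$ (which follows because $T^{-1}$ must satisfy the symmetric constraints and $\det$ arguments will pin it down). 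So at this stage I obtain that any isometry lies in $A_\infty^{n\times n}$ with $T_{ij}\in\m_\infty^{m_i\times m_j}$ for $j>i$, which is conditions (1)–(2) of Definition \ref{oldgroup} except for the invertibility of the diagonal blocks.

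Next I would handle reduceness, which pins down the diagonal blocks. By Lemma \ref{pieces}, the family $T\bb$ is reduced if and only if each piece $(T\bb)_\rho$ is reduced, and the pieces are exactly the $\kappa$ blocks of rows (since the $r_i$ are in distinct classes mod $\Z$ — here I use $-1<r_1<\cdots<r_\kappa\le 0$ so they are even distinct reals in $(-1,0]$). For the $i$-th block, Theorem \ref{SS} says reduceness is equivalent to the images $\red_{r_i}(b_iT\text{'s rows})$ being $k$-linearly independent in $V_{r_i}$. Because $T_{ij}\in\m_\infty$ for $j>i$ and $r_i-r_j<0$ for $j<i$ makes those contributions vanish in $\red_{r_i}$ as well, the reduction of the $i$-th block of rows depends only on $\zc(T_{ii})$, i.e. on $T_{ii}\bmod\m_\infty$. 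Hence reduceness of that block is equivalent to $T_{ii}\bmod\m_\infty$ being invertible over $k$, i.e. $T_{ii}\in\gl_{m_i}(A_\infty)$. This gives the inclusion $\mathrm{Aut}(E)\subseteq O(m_1,\dots,m_\kappa,A_\infty)$.

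Finally I would prove the reverse inclusion and that the right-hand side consists of isometries: if $T\in O(m_1,\dots,m_\kappa,A_\infty)$, then $T$ is a block upper-triangular-modulo-$\m_\infty$ matrix with invertible diagonal blocks, so $\det T\in U_\infty$ (it is a unit times a product of $\det T_{ii}$, each a unit), whence $T\in\gl_n(A_\infty)$ and in particular $T$ is a $K$-linear automorphism of $E$. Running the two computations above in reverse — each block of rows of $T$ has length $r_i$ by the size conditions, and its reduction in $V_{r_i}$ is governed by $\zc(T_{ii})$, which is invertible — shows via Theorem \ref{SS} and Lemma \ref{pieces} that $T\bb$ is an orthonormal basis of $E$, so $T\in\mathrm{Aut}(E)$ by Lemma \ref{equivMap}. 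The main obstacle, and the one point deserving care, is the bookkeeping of degrees in the block product $b_iT$: one must check that the off-diagonal blocks below the diagonal ($j<i$) never cause trouble — their entries automatically lie in $A_\infty$, their contribution $|{\cdot}|+r_j$ to the length is $\le r_i$ with equality impossible, and they vanish under $\red_{r_i}$ — so that the length and the reduction of the $i$-th row block are controlled purely by the diagonal block $T_{ii}$, exactly as in the $\kappa=1$ case treated in Theorem \ref{autgl}.
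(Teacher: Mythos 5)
Your proposal is correct and follows essentially the same route as the paper: identify $\mathrm{Aut}(E)$ via Lemma \ref{equivMap} with the matrices whose rows form an orthonormal basis, use the explicit norm to force $T_{ij}\in A_\infty^{m_i\times m_j}$ for $j\le i$ and $T_{ij}\in\m_\infty^{m_i\times m_j}$ for $j>i$, and use Theorem \ref{SS} to translate reduceness of each length-$r_i$ block into invertibility of $T_{ii}$ modulo $\m_\infty$ (the paper packages exactly this as its ``Claim''). Your brief detour suggesting that the $j<i$ blocks need a $T^{-1}$/determinant argument is unnecessary — as you yourself note later, $r_i-r_j\in(0,1)$ already forces those entries into $A_\infty$ directly — but this does not affect the correctness of the argument.
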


\begin{proof}
For $1 \le i\le \kappa$, let $n_i=m_1+\cdots+m_i$ and $n=n_\kappa$.
Let $E= \bot_{i=1}^\kappa \kk^{m_i}(r_i)$ and denote by $\v{~}$ the norm on $E$; that is, 
\begin{align}\label{standynorm}
\v{(a_1,\dots, a_n)}=\max\left\{|a_j|+r_{i_j}\mid 1\le j\le n,\ n_{i_j}<j\le n_{i_j+1}\right\}.
\end{align}

Let $e_1,\dots,e_n$ be the standard basis of $K^n$, which is an orthonormal basis of $E$. By Lemma \ref{equivMap}, $\mathrm{Aut}(E)$ consists of the matrices $T\in\gl_n(K)$ whose rows form an orthonormal basis of $E$. 
Let us show that this property characterizes the matrices in $O(m_1,\dots,m_\kappa,A_\infty)$. To this end, we will use the following:\medskip

\noindent{\textbf{Claim}: }
Let $i\in\{1,\dots, \kappa\}$ and $b_1,\dots,b_{m_i}\in E$. It holds $\v{b_1}=\cdots=\v{b_{m_i}}=r_i$ and $\red_{r_i}(b_1),\dots,\red_{r_i}(b_{m_i})$ are $k$-linearly independent if and only if the following two conditions are satisfied:
\begin{enumerate}
\item $b_{l}=(b_{1,l},\dots,b_{n,l})\in A^{n_i}_\infty\times\m^{n-n_i}_\infty$, for all $1\leq l\leq m_i$.\label{impro1}
\item $Q:=(b_{j,l}\mid 1\leq l \leq  m_i,\,n_{i-1}< j \leq n_i )\in \gl_{m_i}(A_\infty)$.
\end{enumerate}\medskip

The statement of the theorem follows immediately from the claim. In fact, for any $T\in\gl_n(K)$, Theorem \ref{SS} shows that the rows of $T$ form an orthonormal basis of $E$ if and only if the $\kappa$ subfamilies of the set of rows determined by the partition $n=m_1+\cdots+m_\kappa$ satisfy the condition of the claim. By the claim this is equivalent to $T\in O(m_1,\dots,m_\kappa,A_\infty)$.

We have to prove the claim. By (\ref{standynorm}), $\v{b_l}\leq r_i$, for $1\leq l\leq m_i$, is equivalent to item (\ref{impro1}) of the claim, since $-1<r_1< \dots< r_\kappa\leq 0$. 

Note that $b_{j,l}e_j\in E_{<r_i}$, for all $1\leq j\leq n_{i-1}$ (because $|b_{j,l}|\leq 0$, $\v{e_j}<r_i$) and for all $n_i<j\leq n$ (because $|b_{j,l}|\leq -1$, $\v{e_j}<r_i+1$). Thus,
$$
b_l=\sum_{j=1}^nb_{j,l}e_j\in \sum_{j=n_{i-1}+1}^{n_i}\zc(b_{j,l}) e_j+E_{<r_i},\quad 1\le l\le m_i.
$$ 

Clearly, $\red_{r_i}(b_1),\dots,\red_{r_i}(b_{m_i})$ are $k$-linearly independent if and only if the matrix $(\zc(b_{j,l })_{1\leq l \leq  m_i,\,n_{i-1}< j \leq n_i })$ belongs to $\gl_{m_i}(k)$. This is equivalent to $Q\in \gl_{m_i}(A_\infty)$ and $\v{b_l}= r_i$ for $1\leq l\leq m_i$. This ends the proof of the claim. 
\end{proof}
Since every normed space $(E,\vv)$ is isometric to some $\bot_{i=1}^\kappa \kk^{m_i}(r_i)$ (Lemma \ref{cbisiso}), Theorem \ref{helpi} reveals the general structure of $\aut(E,\vv)$.

Let $\bb=(b_1,\dots,b_n)$ and $\bb'=(b'_1,\dots,b'_n)$ be two bases of $E$. The \emph{transition matrix} from $\bb$ to $\bb'$ is the unique matrix $T=T(\bb\rightarrow \bb')\in \gl_n(K)$ such that
$$T(b'_1\dots b'_n)^\tp=(b_1\dots b_n)^\tp.$$ 

Thus, if $(a_1,\dots, a_n)$ are the coordinates of a vector $u$ in $E$ with respect to the basis $\bb$, then $(a_1\dots a_n)T$ is the coordinate vector of $u$ with respect to the basis $\bb'$.

\begin{lemma}\label{transition} 
Let $\bb'$ be an orthonormal basis of $E$ and let $m_1,\dots,m_\kappa$ be the multiplicities of the lengths of the vectors of $\bb'$. Then, a basis $\bb$ of $E$ is orthonormal if and only if the transition matrix from $\bb$ to $\bb'$ belongs to $O(m_1,\dots,m_\kappa,A_\infty)$.  
\end{lemma}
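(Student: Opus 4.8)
The plan is to transport the whole statement to the standard normed space $\bot_{i=1}^\kappa\kk^{m_i}(s_i)$ by means of the coordinate isomorphism attached to $\bb'$; there it becomes exactly the computation of the isometry group already carried out in Theorem~\ref{helpi}. Write $s_1<\cdots<s_\kappa$ for the distinct lengths occurring among the vectors of $\bb'$, so that $m_1,\dots,m_\kappa$ are their multiplicities; since $\bb'$ is orthonormal, its vectors are already ordered by increasing length and $-1<s_1<\cdots<s_\kappa\le 0$. By Lemma~\ref{cbisiso} the coordinate map $c_{\bb'}\colon E\to \kk(\v{b'_1})\perp\cdots\perp\kk(\v{b'_n})$ is an isometry carrying $\bb'$ to the standard basis of $K^n$, and after grouping the factors of equal length this target is $E_0:=\bot_{i=1}^\kappa\kk^{m_i}(s_i)$.

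First I would observe that a basis $\bb=(b_1,\dots,b_n)$ of $E$ is orthonormal if and only if its image $c_{\bb'}(\bb)$ is an orthonormal basis of $E_0$: indeed $c_{\bb'}$ is length-preserving, and by Lemma~\ref{mapsred} both $c_{\bb'}$ and its inverse send reduced sets to reduced sets, so reduceness and the length bounds $-1<\cdots\le 0$ transfer in both directions. Next I would unwind the transition matrix: from $T(b'_1\cdots b'_n)^\tp=(b_1\cdots b_n)^\tp$ one reads off $b_i=\sum_j T_{ij}b'_j$, so the $\bb'$-coordinate vector of $b_i$ is the $i$-th row of $T$ and hence $c_{\bb'}(\bb)=(e_1T,\dots,e_nT)$, where $(e_1,\dots,e_n)$ is the standard (orthonormal) basis of $E_0$. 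Thus $\bb$ is orthonormal if and only if $T$ maps the standard basis of $E_0$ to an orthonormal basis of $E_0$, which by Lemma~\ref{equivMap} (applied with $E=E'=E_0$, whose successive minima trivially agree) holds if and only if $T\in\aut(E_0)$; and Theorem~\ref{helpi} identifies $\aut(E_0)$ with $O(m_1,\dots,m_\kappa,A_\infty)$. Concatenating these equivalences proves the lemma.

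The argument is almost entirely bookkeeping, and the one place I would take care is the row/column convention: the point is that the \emph{rows} of the transition matrix $T(\bb\to\bb')$ are precisely the coordinate vectors of $\bb$ in the basis $\bb'$, so that ``$\bb$ orthonormal'' really translates into ``the rows of $T$ form an orthonormal basis of $E_0$'', which is exactly the condition that Lemma~\ref{equivMap} and Theorem~\ref{helpi} characterize as membership in $O(m_1,\dots,m_\kappa,A_\infty)$. One should also make sure the block decomposition in Definition~\ref{oldgroup} is taken with respect to the same ordering $s_1<\cdots<s_\kappa$ used to normalize $\bb'$, but no genuine difficulty arises there.
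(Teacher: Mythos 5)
Your proof is correct and follows essentially the same route as the paper: both transport the problem through the coordinate isometry $c_{\bb'}$ to the standard space $\bot_{i=1}^\kappa\kk^{m_i}(r_i)$ and then invoke Lemma~\ref{equivMap} together with Theorem~\ref{helpi}. The paper phrases this via the commutative triangle $c_{\bb'}=T\circ c_{\bb}$ and the equivalence ``$c_{\bb}$ is an isometry iff $\bb$ is orthonormal,'' whereas you track the image $c_{\bb'}(\bb)$ as the rows of $T$ directly, but the content is identical.
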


\begin{proof}
Let $E':= \bot_{i=1}^\kappa \kk^{m_i}(r_i)$, where $r_1,\dots, r_\kappa$ are the pairwise different lengths of the vectors in $\bb'$. The transition matrix $T$ from $\bb$ to $\bb'$ determines a $K$-isomorphism $T:E'\rightarrow E'$ fitting into the following commutative diagram:
$$\begin{tikzcd}E \arrow{rd}[swap]{c_{\bb'}} \arrow{r}{c_{\bb}}  &E'\arrow{d}{T} \\                           & E' \end{tikzcd}$$

By Lemma \ref{cbisiso}, $c_{\bb'}$ is an isometry. Hence, $T$ is an isometry if and only if $c_{\bb}$ is an isometry. By Theorem \ref{helpi}, $T$ is an isometry if and only if $T\in O(m_1,\dots,m_\kappa,A_\infty)$. By Lemma \ref{equivMap}, $c_\bb$ is an isometry if and only if $\bb$ is an orthonormal basis. 
\end{proof}

\subsection{Determinant and orthogonal defect}\label{detod}

\begin{definition}[Volume]
Let $\bb$ be a basis of a normed space $E$. 
We define the \emph{volume} of $\bb$ as $\vol(\bb):=\sum_{b\in\bb}\v{b}$.

We define the \emph{volume} of $E$ as the volume of any orthonormal basis of $E$. 
The \emph{volume} of a lattice $L$ is defined to be the volume of a reduced basis of $L$.
We use the notation $\vol(E)$ and $\vol(L)$, respectively.
\end{definition}

\begin {definition}[Determinant]
Let $\bb$ be a basis of a normed space $E$. We define the \emph{determinant} $d(\bb)$ of $\bb$ to be the fractional ideal of $A$ generated by the determinant of the transition matrix from $\bb$ to an orthonormal basis of $E$.

The \emph{determinant} $d(L)$ of a lattice $L$ is defined to be the determinant of any basis of $L$. 
\end {definition}

By Lemma \ref{transition} the definition of the determinant is independent of the choice of the orthonormal basis of $E$.

For $h\in K$, we set $|hA|:=|h|$ in order to extend the degree function $|~|$ to fractional ideals of $A$.

\begin{lemma}[Hadamard's inequality]
Let $\bb$ be a basis of $E$. Then,
$$
|d(\bb)|\le\vol(\bb)-\vol(E).
$$
\end{lemma}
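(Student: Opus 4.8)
The plan is to reduce everything to the case of an orthonormal basis and then to exploit the block structure of the transition matrix provided by Lemma \ref{transition}. First I would fix an orthonormal basis $\bb' = (b'_1,\dots,b'_n)$ of $E$, ordered by increasing length, and recall that $\vol(E) = \vol(\bb') = \sum_{i=1}^n \v{b'_i}$. Let $T = T(\bb \to \bb')$ be the transition matrix, so that $d(\bb) = \det(T)\, A$ and hence $|d(\bb)| = |\det T| = v_\infty$-degree of $\det T$. The claim to prove is therefore
$$
|\det T| \le \vol(\bb) - \vol(E) = \sum_{i=1}^n \big(\v{b_i} - \v{b'_i}\big).
$$

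The key step is to estimate $|\det T|$ row by row. Writing $T = (t_{ij})$, the relation $(b'_1\dots b'_n)^\tp = T^{-1}\cdot(\text{something})$ — more precisely, unwinding the definition, each $b_i = \sum_j t_{ij} b'_j$ — shows that $\v{b_i} = \max_j \{ |t_{ij}| + \v{b'_j} \}$ because $\bb'$ is reduced. In particular $|t_{ij}| \le \v{b_i} - \v{b'_j}$ for every $j$, and since the $b'_j$ are ordered by increasing length, $|t_{ij}| \le \v{b_i} - \v{b'_1} \le \cdots$; the useful bound is $|t_{ij}| \le \v{b_i} - \v{b'_j}$. Now expand the determinant: $\det T = \sum_{\sigma \in S_n} \operatorname{sgn}(\sigma) \prod_{i=1}^n t_{i\sigma(i)}$. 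By the ultrametric inequality for the degree function,
$$
|\det T| \le \max_{\sigma \in S_n} \sum_{i=1}^n |t_{i\sigma(i)}| \le \max_{\sigma \in S_n} \sum_{i=1}^n \big(\v{b_i} - \v{b'_{\sigma(i)}}\big) = \sum_{i=1}^n \v{b_i} - \sum_{j=1}^n \v{b'_j},
$$
since $\sigma$ merely permutes the indices in the second sum. This is exactly $\vol(\bb) - \vol(E)$.

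The only genuine subtlety is the bookkeeping convention for the transition matrix: the definition in the excerpt gives $T(b'_1\dots b'_n)^\tp = (b_1\dots b_n)^\tp$, so $b_i = \sum_j t_{ij} b'_j$ and the row-wise estimate above is the correct one; one must be careful not to transpose this by accident. A second point worth a sentence is that the identity $\v{b_i} = \max_j\{|t_{ij}| + \v{b'_j}\}$ uses that $\bb'$ is a \emph{reduced} (indeed orthonormal) basis, which is precisely Definition \ref{defred1} applied to the expansion of $b_i$ in $\bb'$; this is where reducedness of the target basis enters. Apart from these conventions the argument is the standard ultrametric Hadamard estimate — the permutation expansion of the determinant combined with the non-archimedean triangle inequality $|x+y| \le \max\{|x|,|y|\}$ — and there is no real obstacle beyond getting the indices straight.
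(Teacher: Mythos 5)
Your proof is correct and follows essentially the same route as the paper: both expand $\det T$ over permutations, bound each entry by $|t_{ij}|\le\v{b_i}-\v{b'_j}$ using reducedness of the orthonormal basis $\bb'$, and apply the ultrametric inequality so that every permutation summand has degree at most $\vol(\bb)-\vol(E)$. Your remarks on the transition-matrix convention and on where reducedness of $\bb'$ is used are accurate and match the paper's argument.
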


\begin{proof}Let $\bb=(b_1,\dots,b_n)$ and let $\bb'=(b'_1,\dots,b'_n)$ be an orthonormal basis of $E$. Let $T=(t_{i,j})$ be the transition matrix from $\bb$ to $\bb'$. Since $\bb'$ is reduced, for every $1\le i,j\le n$, we have
$$
\v{t_{j,i}b'_i}\le \max_{1\le k\le n}\{\v{t_{j,k}b'_k}\}=\v{b_j}.
$$
Hence, every summand of $\det T$, corresponding to a permutation $\tau$ of the set $\{1,\dots,n\}$, has degree: 
\begin{align*}
|t_{1,\tau(1)}\cdots t_{n,\tau(n)}|=&\,
|t_{1,\tau(1)}|+\cdots +|t_{n,\tau(n)}|\\\le &\, 
\v{b_1}-\v{b'_{\tau(1)}}+\cdots +\v{b_n}-\v{b'_{\tau(n)}}\\=&\,\vol(\bb)-\vol(E). 
\end{align*}
Thus, $|\det T|\le\vol(\bb)-\vol(E)$.
\end{proof}

\begin{definition}[Orthogonal defect]
The difference $$\od(\bb):=\vol(\bb)-\vol(E)-|d(\bb)|\ge0$$ is called the \emph{orthogonal defect} of $\bb$.
\end{definition}
If $\bb$ is orthonormal, then $\vol(B)=\vol(E)$ and $|d(\bb)|=0$, so that $\od(\bb)=0$.
\begin{lemma}\label{OD} 
Let $\bb=(b_1,\dots,b_n)$ be a basis of $E$. Then, for any element $x=\sum_{i=1}^na_ib_i\in E$, we have
\begin{equation}\label{od}
\v{a_ib_i}\le \v{x}+\od(\bb), \text{ for all }\,1\le i\le n.
\end{equation}
\end{lemma}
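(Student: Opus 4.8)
The plan is to reduce the bound for an arbitrary nonzero $x=\sum_{i=1}^n a_ib_i$ to the two structures already developed: the transition matrix $T$ from $\bb$ to an orthonormal basis $\bb'$, and Hadamard's inequality relating $|d(\bb)|=|\det T|$ to $\vol(\bb)-\vol(E)$. Fix $x\neq 0$ and fix an index $i$ with $a_i\neq 0$ (if $a_i=0$ the inequality in (\ref{od}) reads $-\infty\le \v{x}+\od(\bb)$ and is vacuous, so there is nothing to prove). First I would express things in coordinates with respect to the orthonormal basis $\bb'$: writing $c_{\bb'}(x)=(y_1,\dots,y_n)$ and $c_{\bb'}(b_j)=j$-th row of $T$, we have $\v{x}'=\v{x}=\max_{1\le j\le n}\{\v{y_jb'_j}'\}$ because $\bb'$ is reduced, and the vector $(a_1,\dots,a_n)$ of coordinates of $x$ in $\bb$ is obtained from $(y_1,\dots,y_n)$ by multiplying by $T^{-1}$ (equivalently, $(a_1\cdots a_n)T$ gives the $\bb'$-coordinates, so $(a_1\cdots a_n)=(y_1\cdots y_n)T^{-1}$).

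Next I would control $a_i$. Since $a_i$ is the $i$-th entry of $(y_1\cdots y_n)T^{-1}$, by Cramer's rule $a_i$ is $\det T$ divided into a sum of products, each of which is $\pm y_j$ times a cofactor of $T$; more concretely, $a_i=(\det T)^{-1}\sum_{j} y_j\, (\operatorname{adj} T)_{j,i}$, where $(\operatorname{adj}T)_{j,i}$ is (up to sign) the $(i,j)$ minor of $T$. Bounding the degree of each $(n-1)\times(n-1)$ minor of $T$ exactly as in the proof of Hadamard's inequality — each summand of that minor corresponding to a bijection $\tau$ from $\{1,\dots,n\}\setminus\{j\}$ to $\{1,\dots,n\}\setminus\{i\}$ has degree at most $\sum_{l\neq j}(\v{b_l}-\v{b'_{\tau(l)}})=(\vol(\bb)-\v{b_j})-(\vol(E)-\v{b'_i})$ — I get
$$
|a_i|\le \max_{j:\,y_j\neq 0}\Big\{|y_j| + \big(\vol(\bb)-\v{b_j}\big)-\big(\vol(E)-\v{b'_i}\big)\Big\} - |\det T|.
$$
Therefore $\v{a_ib_i}=|a_i|+\v{b_i}$... wait — careful: $\v{b_i}$ versus $\v{b'_i}$. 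This is the step I expect to be the main obstacle: matching the index $i$ of the $\bb$-basis with an index of the $\bb'$-basis. The clean way around it: add $\v{b_i}$ to both sides after using $|\det T|=|d(\bb)|=\vol(\bb)-\vol(E)-\od(\bb)$, and use $|y_j|+\v{b'_j}'=\v{y_jb'_j}'\le \v{x}$ together with $\v{b_i}-\v{b'_i}\le \max_{1\le k\le n}\{|t_{i,k}|\}$... instead it is cleaner to observe directly that the $(i,j)$-minor bound combined with $|\det T|=\vol(\bb)-\vol(E)-\od(\bb)$ and $|y_j|\le \v{x}-\v{b'_j}'$ yields
$$
|a_i|+\v{b_i}\le \big(\v{x}-\v{b'_j}'\big)+\big(\vol(\bb)-\v{b_j}\big)-\big(\vol(E)-\v{b'_i}'\big)-\big(\vol(\bb)-\vol(E)-\od(\bb)\big)+\v{b_i}
$$
for the maximizing $j$; and here I must still absorb the stray terms $\v{b_i}-\v{b_j}$ and $\v{b'_i}'-\v{b'_j}'$.

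To close that gap cleanly I would instead avoid Cramer's rule on $T$ and argue symmetrically: apply the Hadamard-style minor estimate to the matrix obtained from $T$ by replacing its $i$-th column by $c_{\bb'}(x)^{\tp}=(y_1,\dots,y_n)^{\tp}$; call it $T^{(i)}$. On one hand, by Cramer's rule $\det T^{(i)} = a_i\cdot(\text{the $i$-th coordinate in the $x$-expansion})$... more precisely, since $(a_1\cdots a_n)T=(y_1\cdots y_n)$, replacing column $i$ of $T$ by the column vector of $y$'s and keeping the other columns gives $\det T^{(i)} = a_i \det T$, because only the $i$-th coordinate of $(a_1,\dots,a_n)$ survives. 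Wait, that is not literally Cramer since $x$ is being expanded in $\bb$, not in the columns — let me restate: Cramer's rule for the system $(a_1\cdots a_n)T=(y_1\cdots y_n)$ in the unknowns $a_l$ gives $a_i=\det T^{(i)}/\det T$ where $T^{(i)}$ is $T$ with its $i$-th \emph{row} replaced by $(y_1,\dots,y_n)$. Now estimate $|\det T^{(i)}|$ by expanding over permutations $\tau$ exactly as in Hadamard's proof: the factor from row $i$ contributes $|y_{\tau(i)}|$, and since $\v{y_{\tau(i)}e_{\tau(i)}}'\le\v{x}$ we have $|y_{\tau(i)}|\le\v{x}-\v{b'_{\tau(i)}}'$; every other row $j\neq i$ contributes $|t_{j,\tau(j)}|\le \v{b_j}-\v{b'_{\tau(j)}}'$ as in the Hadamard lemma. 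Summing, each permutation-summand of $\det T^{(i)}$ has degree at most $\v{x}+\sum_{j\neq i}\v{b_j}-\sum_{l=1}^n\v{b'_l}'=\v{x}+\vol(\bb)-\v{b_i}-\vol(E)$. Hence $|\det T^{(i)}|\le \v{x}+\vol(\bb)-\v{b_i}-\vol(E)$, so
$$
\v{a_ib_i}=|a_i|+\v{b_i}=|\det T^{(i)}|-|\det T|+\v{b_i}\le \v{x}+\vol(\bb)-\vol(E)-|d(\bb)|=\v{x}+\od(\bb),
$$
which is exactly (\ref{od}). The only genuine work is the degree estimate on the permutation-expansion of $\det T^{(i)}$, which is a routine repetition of the Hadamard computation with one row swapped out; everything else is bookkeeping. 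I do not foresee a conceptual obstacle once the substitution $a_i=\det T^{(i)}/\det T$ and the row-wise degree bounds are set up correctly.
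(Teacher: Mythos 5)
Your final argument is correct and is essentially the paper's own proof: Cramer's rule applied to $(a_1\cdots a_n)T=(y_1\cdots y_n)$ gives $a_i=\det T^{(i)}/\det T$ with $T^{(i)}$ the transition matrix from $(b_1,\dots,b_{i-1},x,b_{i+1},\dots,b_n)$ to $\bb'$, and your permutation-expansion degree bound on $|\det T^{(i)}|$ is exactly Hadamard's inequality applied to that modified basis, which is how the paper phrases it. The earlier column-replacement detour is harmless since you correct it before concluding.
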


\begin{proof}Let $\bb'=(b'_1,\dots,b'_n)$ be an orthonormal basis of $E$, and $T$ the transition matrix from $ \bb$ to $\bb'$. We have $x=\sum_{i=1}^nc_ib'_i$, for
$(a_1\dots a_n)T=(c_1\dots c_n)$.

If $a_i=0$ the inequality (\ref{od}) is obvious. Suppose $a_i\ne0$. By Cramer's rule, we have $a_i=\det T' /\det T $, where $T'$ is the transition matrix from the basis $b_1, \dots,b_{i-1},x,b_{i+1},\dots,b_n$ to $\bb'$. By Hadamard's inequality, we get
$$
|\det T'|\le \sum_{j\ne i}\v{b_j}+\v{x}-\vol(E).
$$
Hence,
\begin{align*}
\v{a_ib_i}=&\,|a_i|+\v{b_i}=|\det T'|-|\det T|+\v{b_i}\\\le &\,\v{x}+\vol(\bb)-\vol(E)-|\det T|=\v{x}+\od(\bb).
\end{align*}
\end{proof}

\begin{theorem}\label{reducecriteria}
A basis $\bb$ is reduced if and only if $\od(\bb)=0$.

In this case, $|d(\bb)|=\sum_{b\in\bb}\lceil \v{b}\rceil$.
\end{theorem}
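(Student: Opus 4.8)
The plan is to establish the two directions of the equivalence, together with the extra formula for $|d(\bb)|$, by combining the orthogonal defect inequality (Lemma~\ref{OD}) with the characterization of reduced bases in Definition~\ref{defred1}. For the ``if'' direction, assume $\od(\bb)=0$. Then Lemma~\ref{OD} immediately yields $\v{a_ib_i}\le\v{x}$ for all $i$, where $x=\sum_i a_ib_i$; combined with the ultrametric inequality $\v{x}\le\max_i\{\v{a_ib_i}\}$, this forces $\v{x}=\max_i\{\v{a_ib_i}\}$, which is precisely condition~(\ref{defred}). Hence $\bb$ is reduced.

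For the converse, suppose $\bb=(b_1,\dots,b_n)$ is reduced, with lengths ordered $r_1\le\cdots\le r_n$. By Lemma~\ref{cbisiso}, the lattice $L=\gen{\bb}_A$ is isometric to $\oo(r_1)\perp\cdots\perp\oo(r_n)$, and since all lattices in $E$ share the multiset of successive minima (Lemma~\ref{multi}), the orthonormal basis $\bb'$ of $E$ has lengths $s_1\le\cdots\le s_n$ with $s_i=r_i-\lceil r_i\rceil\in(-1,0]$, equivalently $r_i=s_i+\lceil r_i\rceil$. Then $\vol(\bb)-\vol(E)=\sum_i(r_i-s_i)=\sum_i\lceil r_i\rceil=\sum_{b\in\bb}\lceil\v{b}\rceil$. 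It therefore suffices to show $|d(\bb)|=\sum_i\lceil r_i\rceil$, since then $\od(\bb)=\vol(\bb)-\vol(E)-|d(\bb)|=0$ and the displayed formula for $|d(\bb)|$ follows simultaneously.

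To compute $|d(\bb)|$, let $T$ be the transition matrix from $\bb$ to the orthonormal basis $\bb'$; we must show $|\det T|=\sum_i\lceil r_i\rceil$. The key point is that $t^{-\lceil r_i\rceil}b_i$ has length $s_i$, so $(t^{-\lceil r_1\rceil}b_1,\dots,t^{-\lceil r_n\rceil}b_n)$ is itself an orthonormal basis of $E$ (it is reduced by Lemma~\ref{product}(2), and its lengths lie in $(-1,0]$). Letting $D=\di(t^{\lceil r_1\rceil},\dots,t^{\lceil r_n\rceil})$, the transition matrix from this scaled basis to $\bb'$ is $D^{-1}T$, which by Theorem~\ref{helpi} (applied via Lemma~\ref{transition}) lies in an orthonormal group $O(m_1,\dots,m_\kappa,A_\infty)\subset\gl_n(A_\infty)$; in particular $\det(D^{-1}T)\in U_\infty$, so $|\det(D^{-1}T)|=0$. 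Hence $|\det T|=|\det D|=\sum_i\lceil r_i\rceil$, as desired.

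The main obstacle is the bookkeeping in the converse direction: one must be careful that the orthonormal basis of $E$ has lengths congruent to those of $\bb$ modulo $\Z$ and lying in $(-1,0]$ — this is where Lemma~\ref{multi} and the definition of orthonormal basis are essential — and that rescaling $b_i$ by $t^{-\lceil r_i\rceil}$ indeed produces an orthonormal basis whose transition matrix to $\bb'$ is a unit-determinant matrix over $A_\infty$. Once the identification $|d(\bb)|=\sum_i\lceil r_i\rceil$ is in place, everything collapses cleanly. An alternative to the last paragraph avoids Theorem~\ref{helpi}: since $\bb'$ is reduced, Hadamard's inequality applied to both $T$ and $T^{-1}$ gives $|\det T|\le\vol(\bb)-\vol(E)$ and $|\det T^{-1}|\le\vol(\bb')-\vol(E')$ where $E'$ is $E$ with basis $\bb$ playing the orthonormal role after twisting; combining $|\det T|+|\det T^{-1}|=0$ with both bounds pins down $|\det T|=\vol(\bb)-\vol(E)$ exactly. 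Either route works; I would present the cleaner scaling argument.
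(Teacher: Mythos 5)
Your proof is correct and follows essentially the same route as the paper's: Lemma~\ref{OD} for the ``if'' direction, and for the converse the rescaling $b_i\mapsto t^{-\lceil\v{b_i}\rceil}b_i$ to an orthonormal basis, whose transition matrix to $\bb'$ has unit determinant over $A_\infty$, giving $|d(\bb)|=\sum_{b\in\bb}\lceil\v{b}\rceil$ and hence $\od(\bb)=0$. The only difference is that you spell out explicitly (via Lemma~\ref{transition}) the fact that an orthonormal basis has $\od=0$ and $|d|=0$, which the paper records just before the theorem.
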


\begin{proof}
If $\od(\bb)=0$, the lemma above shows that $\bb$ is reduced.

Suppose the basis $\bb$ is reduced. Let $m_i=-\lceil\v{b_i}\rceil\in\Z$, so that the basis $\bb'=(t^{m_1}b_1,\dots,t^{m_n}b_n)$ is orthonormal. If we take $m=\sum_{i=1}^nm_i$ then, clearly
$$
\vol(\bb')=m+\vol(\bb),\quad 0=|d(\bb')|=m+|d(\bb)|.
$$
Therefore, $\od(\bb)=\od(\bb')=0$, and $|d(\bb)|=-m$.
\end{proof}

\section{Reduction algorithm}\label{reductionalgorithm}
A reduction algorithm transforms any family of nonzero vectors in a normed space into a reduced one, still generating the same $A$-module.

In the literature there are several reduction algorithms for particular normed spaces \cite{MS,Len,Schoe,Gath}. 
In this section, our goal is to describe such a reduction algorithm for arbitrary real-valued normed spaces.

For the reader's commodity we assume that the initial family of nonzero vectors is a basis of the normed space. The reduction algorithm is based on an iterated performance of a reduction step.

\begin{definition}[Reduction step]\label{redstep}
Let $\bb$ be a basis of a normed space $(E,\v{\ })$. A reduction step is a replacement of some $b\in \bb$ by $\tilde{b}=b+\alpha$, for some $A$-linear combination $\alpha$ of $\bb\setminus\{b\}$ such that $\v{\tilde{b}}<\v{b}$.
\end{definition}
Clearly, $\left(\bb\setminus\{b\}\right)\cup\{\tilde{b}\}$ is still a basis of the lattice $L=\langle\bb\rangle_A$. Any reduction step keeps invariant the value $|d(\bb)|$ and decreases the value $\vol(\bb)=\sum_{b\in\bb}\v{b}$ strictly. Since $O\!D(\bb)=\vol(\bb)-\vol(E)-|d(\bb)|$ is bounded by $0$ from below, after a finite number of reduction steps we obtain a reduced basis of $L$ by Theorem \ref{reducecriteria} and Corollary \ref{discreteintervall}.

In practice, we work out this problem by using coordinates with respect to an orthonormal basis of $E$. We have then an explicit isometry between $E$ and the normed space $\perp_{i=1}^n\mathcal{K}(r_i)$, where $-1<r_1\le r_2\le\cdots \le r_n\le0$ are the lengths of the given orthonormal basis of $E$. Hence, we may assume that $E=\perp_{i=1}^n\mathcal{K}(r_i)$.

The initial basis $\bb$ is given by the rows of some $T\in\gl_n(K)$, and the reduction algorithm finds $R\in\gl_n(A)$ such that the rows of $RT$ are a reduced basis $\widetilde{\bb}$. The matrix $R=T(\widetilde{\bb}\rightarrow \bb)$ is obtained as a product, $R=R_m\cdot R_{m-1}\cdots R_1$, where each $R_i$ represents the concatenation of several reduction steps. 

\subsection{The case $\#\sig(E)=1$}\label{easycase}
Let $E=\kk^n(r)$ for some $-1<r\le0$, with norm: 
$$
\v{(a_1,\dots, a_n)}=\max_{1\leq i\leq n}\{|a_i|\}+r.
$$
Since a basis of $E$ is reduced if and only if it is reduced as a basis of $\kk^n$, we could assume that $r=0$. Although there exist several descriptions of a reduction algorithm for this particular normed space \cite{Len,MS}, we review it in the case $r\neq 0$, in regard to its generalization to arbitrary normed spaces. 

The standard basis $(e_1,\dots,e_n)$ of $K^n$ is an orthonormal basis of $E$. A vector $(a_1,\dots,a_n)=\sum_{i=1}^na_ie_i$ belongs to $E_{\le r}$ if and only if $|a_i|\le0$ for all $i$. Hence, Corollary \ref{basisV} shows that $(\red_r(e_1),\dots,\red_r(e_n))$ is a $k$-basis of $V_r=E_{\leq r}/E_{<r}$, and the choice of this basis determines a $k$-linear isomorphism:
$$
V_r\longrightarrow k^n,\quad \red_r(a_1,\dots,a_n)\ \mapsto\ \big(\zc(a_1),\dots,  \zc(a_n)\big).
$$
Therefore, Theorem \ref{SS} provides a comfortable criterion to decide whether a basis of $E$ is reduced or not.
\begin{corollary}\label{nicecri}
A basis $(b_1,\dots,b_n)$ of $ E$ is reduced if and only if the matrix
$$
\left(\zc\left(t^{-\lceil\v{b_i}\rceil}b_{i,j}\right)\right)_{1\leq i,j\leq n}\in k^{n\times n}
$$
has rank $n$, where $b_i=(b_{i1},\dots, b_{in})$ for $1\leq i\leq n$.
\end{corollary}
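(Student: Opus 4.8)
The plan is to derive Corollary~\ref{nicecri} directly from Theorem~\ref{SS} specialized to the normed space $E=\kk^n(r)$ with a single signature class $\rho=r+\Z$. First I would observe that since $\#\sig(E)=1$, the partition of any basis $\bb=(b_1,\dots,b_n)$ into the pieces $\bb_\rho$ is trivial: the whole basis is the single nonempty piece $\bb_\rho$, with the one real representative $r\in\rho$ already fixed. For each $b_i$ one has $\v{b_i}=r-m_{b_i}$ with $m_{b_i}=\lceil r\rceil-\lceil\v{b_i}\rceil$; but the key point is only that $t^{m_{b_i}}b_i$ lies in $E_{\le r}$ and, after the normalization, the relevant scaling factor is $t^{-\lceil\v{b_i}\rceil}$ (using $\lceil r\rceil=0$ since $-1<r\le 0$). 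So the criterion from Theorem~\ref{SS} reads: $\bb$ is reduced if and only if the vectors $\red_r(t^{-\lceil\v{b_i}\rceil}b_i)$, $1\le i\le n$, are $k$-linearly independent in $V_r$.

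Next I would invoke the explicit identification of $V_r$ with $k^n$ set up just before the corollary: the $k$-isomorphism $V_r\to k^n$ sending $\red_r(a_1,\dots,a_n)\mapsto(\zc(a_1),\dots,\zc(a_n))$, which is well defined precisely because $(\red_r(e_1),\dots,\red_r(e_n))$ is a $k$-basis of $V_r$ by Corollary~\ref{basisV}. Writing $b_i=(b_{i1},\dots,b_{in})$, the vector $t^{-\lceil\v{b_i}\rceil}b_i$ has coordinates $t^{-\lceil\v{b_i}\rceil}b_{ij}$, each lying in $A_\infty$ since $\v{t^{-\lceil\v{b_i}\rceil}b_i}=\v{b_i}-\lceil\v{b_i}\rceil\le 0$ implies every entry has nonpositive degree. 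Applying the isomorphism, $\red_r(t^{-\lceil\v{b_i}\rceil}b_i)$ corresponds to the row vector $\big(\zc(t^{-\lceil\v{b_i}\rceil}b_{i1}),\dots,\zc(t^{-\lceil\v{b_i}\rceil}b_{in})\big)\in k^n$. Linear independence of these $n$ vectors in $V_r$ is therefore equivalent to the $n\times n$ matrix $\big(\zc(t^{-\lceil\v{b_i}\rceil}b_{ij})\big)_{1\le i,j\le n}$ having rank $n$, which is exactly the assertion.

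The only genuine content beyond bookkeeping is checking that $t^{-\lceil\v{b_i}\rceil}b_i$ really does land in $E_{\le r}$ so that $\red_r$ may be applied, and that it is this scaling — rather than the $t^{m_b}b$ of Theorem~\ref{SS}'s statement — that produces the matrix written in the corollary; these differ by a unit power of $t$ (namely $t^{m_b}b$ versus $t^{-\lceil\v{b}\rceil}b$ differ by $t^{\lceil r\rceil}=1$), so they give the same reduction class and hence the same rank condition. I expect the main (minor) obstacle to be presenting this matching of normalizations cleanly, since Theorem~\ref{SS} is phrased with $m_b\in\Z$ defined by $\v{b}=r-m_b$ while the corollary uses $\lceil\v{b}\rceil$; once one notes $-1<r\le 0$ forces $\lceil r\rceil=0$ and hence $m_b=-\lceil\v{b}\rceil$, everything lines up.

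\begin{proof}
Since $\#\sig(E)=1$, there is a single class $\rho\in\R/\Z$ with $\bb_\rho\ne\emptyset$, namely $\rho=r+\Z$, and $\bb_\rho=\bb$. Choose $r$ itself as the real representative of $\rho$. For $b\in\bb$, write $\v{b}=r-m_b$ with $m_b\in\Z$; as $-1<r\le 0$ we have $\lceil r\rceil=0$, hence $m_b=r-\v{b}$ and $\lceil\v{b}\rceil=\lceil r-m_b\rceil=\lceil r\rceil-m_b=-m_b$, so $m_b=-\lceil\v{b}\rceil$. By Theorem~\ref{SS}, $\bb$ is reduced if and only if the family $\{\red_r(t^{m_b}b)\mid b\in\bb\}=\{\red_r(t^{-\lceil\v{b}\rceil}b)\mid b\in\bb\}\subset V_r$ is $k$-linearly independent.

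Write $b_i=(b_{i1},\dots,b_{in})$ for $1\le i\le n$. Since $\v{t^{-\lceil\v{b_i}\rceil}b_i}=\v{b_i}-\lceil\v{b_i}\rceil\le 0\le r$... we need $\le r$, and indeed $\v{b_i}-\lceil\v{b_i}\rceil>-1$ while $r>-1$; more directly, $t^{-\lceil\v{b_i}\rceil}b_i\in E_{\le r}$ because its length is $\le 0$ and equals $r$ exactly when $\v{b_i}\in\Z+r$, which always holds here. In any case each coordinate $t^{-\lceil\v{b_i}\rceil}b_{ij}\in A_\infty$, since it has nonpositive degree. Under the $k$-isomorphism
$$
V_r\longrightarrow k^n,\qquad \red_r(a_1,\dots,a_n)\ \mapsto\ \big(\zc(a_1),\dots,\zc(a_n)\big),
$$
valid by Corollary~\ref{basisV}, the element $\red_r(t^{-\lceil\v{b_i}\rceil}b_i)$ maps to $\big(\zc(t^{-\lceil\v{b_i}\rceil}b_{i1}),\dots,\zc(t^{-\lceil\v{b_i}\rceil}b_{in})\big)\in k^n$. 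Therefore the family $\{\red_r(t^{-\lceil\v{b_i}\rceil}b_i)\mid 1\le i\le n\}$ is $k$-linearly independent in $V_r$ if and only if the $n\times n$ matrix $\big(\zc(t^{-\lceil\v{b_i}\rceil}b_{ij})\big)_{1\le i,j\le n}$ has rank $n$ over $k$. Combining with the previous paragraph yields the claim.
\end{proof}
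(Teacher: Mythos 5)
Your proof is correct and follows exactly the route the paper intends: the corollary is stated there as an immediate consequence of Theorem \ref{SS} together with the explicit isomorphism $V_r\to k^n$ set up in the preceding paragraph, and you supply precisely those details (the identification $m_b=-\lceil\v{b}\rceil$ coming from $\lceil r\rceil=0$, and the translation of $k$-linear independence in $V_r$ into the rank condition). The only blemish is the garbled inequality ``$\le 0\le r$'' mid-proof (one has $r\le 0$), but you immediately give the correct justification that $\v{t^{-\lceil\v{b_i}\rceil}b_i}=r$, so nothing is actually missing.
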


\begin{example}\label{exone}
\rm{Let $K=\Q(t)$ and $E=\kk^2$. We consider $\bb=(b_1,b_2)$ with
$$
b_1=\left( 2t+1  , 1 \right),\quad b_2=( t^7+2, 2t^6).
$$
Clearly, $\v{b_1}=1$ and $\v{b_2}=7$. We consider
$$
M=\left(\begin{array}{cc} \zc\big(\frac{2t+1}{t}\big) &  \zc\big(\frac{1}{t}\big)\\ \zc\big(\frac{t^7+2}{t^7}\big)&   \zc\big(\frac{2}{t}\big)\end{array}\right)  =\left(\begin{array}{cc}2 & 0 \\1 & 0\end{array}\right)\in \Q^{2\times 2}.
$$
Since $ \mathrm{rank}(M)<2$, Corollary \ref{nicecri} shows that the basis $\bb$ is not reduced. }
\end{example}

Let us describe a concrete procedure to perform the reduction steps.

We order by increasing length the vectors $b_1,\dots, b_n$ of the input basis $\bb$. For $1\leq i\leq n$, let $b_i=(b_{i1},\dots, b_{in})$. We transform the  matrix
$$
M=\left(\zc(t^{-\lceil\v{b_i}\rceil}b_{i,j})\right)_{1\leq i,j\leq n}\in k^{n\times n},
$$
into row echelon form, $M'=PM$, with $P=(p_{i,j})$ belonging to the set $\mathrm{LT}_n(k)$ of lower triangular matrices with diagonal entries equal to $1$, up to a permutation of its rows. For commodity of the reader, we discuss only the case where $P$ is already a lower triangular matrix.

The rows of $P$ which correspond to the zero-rows of $M'$ give us non-trivial expressions of the zero vector in $k^n$ as $k$-linear combinations of the rows of $M$. This corresponds to non-trivial expressions of the zero vector in $V_r$ as $k$-linear combinations of $\red_r(t^{-\lceil\v{b_1}\rceil}b_1),\dots,$ $ \red_r(t^{-\lceil\v{b_n}\rceil}b_n)$.

Let $m=\mathrm{rank}(M)$. Let $P_1,\dots,P_n$ be the rows of $P$, and consider the lower triangular matrix $P'$ with rows $P'_1,\dots,P'_n$ defined by
$$
P'_j=\begin{cases}
e_{j},&\text{ if } j\leq m,\\
P_j=(p_{j,1}\cdots p_{j,j-1}\ p_{j,j}=1\ 0\cdots 0), &\text{ if } j> m.
\end{cases}
$$
For $1\leq j\leq m$ we take $\tilde{b}_{j}=b_{j}$ while for $m<j\leq n$ we consider
\begin{align}\label{refreducstep}
\tilde{b}_{j}=\sum_{i<j}p_{j,i}t^{\lceil\v{b_{j}}\rceil-\lceil \v{b_i} \rceil}b_i+b_{j}.
\end{align}
The family $\tilde{\bb}=(\tilde{b}_{1},\dots,\tilde{b}_{n})$ is a basis of the lattice $\gen{\bb}_A$, and the transition matrix $R=T(\widetilde{\bb}\rightarrow \bb)$ is given by
\begin{align}\label{DefR}
R&=\dia(t^{\lceil\v{b_{1}}\rceil},\dots,t^{\lceil\v{b_{n}}\rceil})\cdot P'\cdot \dia(t^{-\lceil\v{b_1}\rceil},\dots,t^{-\lceil\v{b_n}\rceil}).
\end{align}
Note that $R$ is a lower triangular matrix with diagonal entries equal to $1$ and it belongs to $ \gl_n(A)$ thanks to our assumption $\v{b_1}\leq\cdots\leq\v{b_n}$.

By construction, $\red_r(t^{-\lceil\v{b_j}\rceil}\tilde{b}_j)=0$, so that $\v{\tilde{b}_{j}}<\v{b_{j}}$ and (\ref{refreducstep}) is a reduction step. Thus, this procedure performs $n-\mathrm{rank}(M)$ reduction steps at once.

\begin{example}
\rm{We consider Example \ref{exone} again. For the matrices
$$
P=\left(\begin{array}{cc}1 & 0 \\-\frac{1}{2} & 1\end{array}\right)\text{ and }M'=\left(\begin{array}{cc}2 & 0 \\0 & 0\end{array}\right)
$$
it holds $PM=M'$ and $M'$ is in row echelon form. Then, the matrix 
\begin{align*}
R=\dia(t,t^7)\cdot P\cdot \dia(t^{-1},t^{-7})=\left(\begin{array}{cc}1 & 0 \\-\frac{t^6}{2} & 1\end{array}\right)\in\gl_2(\Q[t])
\end{align*}
realizes a reduction step $(\tilde{b}_1\ \tilde{b}_2)^\tp=R\cdot (b_1\ b_2)^\tp$. We get
$$
\tilde{b}_1=b_1,\quad  \tilde{b}_2= \frac{-t^6}{2} b_1+b_2=\left(-\frac{t^6}{2}+2 ,\frac{3t^6}{2}\right).
$$
Since $\v{\tilde{b}_2}=6$, we obtain 
$$
\left(\zc(t^{-\lceil \v{\tilde{b}_i}\rceil }\tilde{b}_{i,j})\right)_{1\leq i,j\leq 2}=\left(\begin{array}{cc} \zc\big(\frac{2t+1}{t}\big) &  \zc\big(\frac{1}{t}\big)\\ \zc\big(-\frac{1}{2}+\frac{2}{t^6}\big)&   \zc(\frac{3}{2})\end{array}\right)  =\left(\begin{array}{cc}2 & 0 \\-\frac{1}{2} & \frac{3}{2}\end{array}\right).
$$
Since this matrix has rank $2$, the basis $(\tilde{b}_1,\tilde{b}_2)$ is reduced by Corollary \ref{nicecri}. }
\end{example}

\subsubsection*{The algorithm}
The initial basis is given by the rows $T_1,\dots,T_n$ of a matrix $T\in \gl_n(K)$. We may always assume that $T$ has polynomial entries. In fact, for $1\le i\le n$, let $g_i\in A$ be the least common multiple of the denominators of the entries in the $i$-th column of $T$, and denote $s_i=r_i-|g_i|$. The isometry
$$
\perp_{i=1}^n\mathcal{K}(r_i)\,\longrightarrow\, \perp_{i=1}^n\mathcal{K}(s_i),\quad(a_1,\dots,a_n)\mapsto (a_1g_1,\dots,a_ng_n) 
$$  
sends the lattice generated by the rows of $T$ to the lattice generated by the rows of $T\op{diag}(g_1,\dots,g_n)$, which has polynomial entries.

\begin{algorithm}\caption{: Basis reduction for $E=\kk^n(r)$}\label{Algo0}\begin{algorithmic}[1]

\REQUIRE $T\in \gl_n(K)\cap A^{n\times n}$.\ENSURE Reduced basis of the lattice generated by the rows of $T$.\\[0.25 cm]\STATE  $s\leftarrow 1$\WHILE{$s< n$}\label{for}\STATE Sort rows of $T$ increasingly ordered w.r.t. $\v{~}$\STATE $M\leftarrow (\zc(t^{-\lceil\v{T_i}\rceil}t_{i,j}))_{1\leq i,j\leq n}\in k^{n\times n}$\STATE Compute $P=(p_{i,j})\in \mathrm{LT}_n(k)$ s.t. $M':=PM$ is in row echelon form\STATE $s\leftarrow \mathrm{rank}(M')$ \IF{$s<n$}\FOR{$i=s+1,\dots,n$}\label{loop}\STATE  $u_i\leftarrow \max\{1\leq j\leq n\mid p_{i,j}\neq 0\}$\STATE $T_{{u_i}}\leftarrow T_{{u_i}}+\sum_{j=1}^{u_i-1}t^{\lceil \v{T_{u_i}}\rceil-\lceil\v{T_{j}}\rceil}\cdot  p_{i,j} T_{j}$\ENDFOR\ENDIF\ENDWHILE\STATE \textbf{return} $T$

\end{algorithmic}\end{algorithm}

\subsection{The general case}

Let $E=\bot_{l=1}^\kappa \kk^{m_l}(r_l)$ for some $-1<r_1<\cdots<r_\kappa\leq 0$. 
For all $1\le l\le \kappa$, denote $n_k=m_1+\cdots+m_l$, and let $n=n_\kappa=\dim E$.

The standard basis $(e_1,\dots,e_n)$ of $K^n$ is an orthonormal basis of $E$. By Corollary \ref{basisV}, the vectors $\left(\red_{r_l}(e_j)\right)_{n_{l-1}<j\le n_l}$ are a basis of $V_{r_l}$ for each $1\le l\le \kappa$. The choice of this basis yields a $k$-linear isomorphism: 
$$
V_{r_l}\longrightarrow k^{m_l},\quad
\red_{r_l}(t^{-\lceil\v{b}\rceil}b) \ \mapsto\ (\zc(t^{-\lceil\v{b}\rceil}a_j))_{n_{l-1}<j\le n_l},
$$
where $b=(a_1,\dots,a_n)\in K^n$ has length $\v{b}\equiv r_l\md{\Z}$.

Therefore, we can reinterpret Theorem \ref{SS} as follows:
\begin{corollary}\label{gencri}
Let $\bb=(b_1,\dots,b_n)$ be a basis of $E$ ordered by increasing length, with $b_i=(b_{i,1}, \dots, b_{i,n})\in K^n$ for all $i$.
The basis $\bb$ is reduced if and only if for all $1\le l\le\kappa$ the following matrix  
has rank $m_{l}$:
$$
M_{r_l}:=\left(\zc(t^{-\lceil\v{b_i}\rceil}b_{i,j})\right)_{i\in I_\bb(r_l),n_{l-1}< j\le n_{l}},
$$
where $I_\bb(r_l)=\{1\leq i\leq n\mid \v{b_i}\equiv r_l \bmod\Z\}$.
\end{corollary}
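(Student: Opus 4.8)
The plan is to read the statement off Theorem \ref{SS} after transporting everything through the $k$-linear isomorphisms $V_{r_l}\cong k^{m_l}$ set up in the paragraph preceding the corollary. First I would record the relevant special features of $E=\bot_{l=1}^\kappa\kk^{m_l}(r_l)$: since $-1<r_1<\cdots<r_\kappa\le 0$, the classes $r_1+\Z,\dots,r_\kappa+\Z$ are pairwise distinct, and the standard basis is orthonormal, hence reduced, so Proposition \ref{basic} gives $\sig(E)=\{r_1+\Z,\dots,r_\kappa+\Z\}$. Thus in the partition $\bb=\bigcup_\rho\bb_\rho$ of Theorem \ref{SS} the only classes that occur are $\rho=r_l+\Z$ for $1\le l\le\kappa$, with $\bb_\rho=\{b_i\mid i\in I_\bb(r_l)\}$; for this class I would choose the representative $r:=r_l$.

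Next I would establish two bookkeeping identifications. For $b=b_i$ with $\v{b_i}\equiv r_l\bmod\Z$, writing $\v{b_i}=r_l+n$ with $n\in\Z$ and using $\lceil r_l\rceil=0$ (because $r_l\in(-1,0]$), one gets $\lceil\v{b_i}\rceil=n$; hence the integer $m_b$ of Theorem \ref{SS}, defined by $\v{b}=r-m_b$, equals $-\lceil\v{b_i}\rceil$, so $t^{m_b}b_i=t^{-\lceil\v{b_i}\rceil}b_i$, which has length exactly $r_l$. Applying the isomorphism $V_{r_l}\to k^{m_l}$, $\red_{r_l}(t^{-\lceil\v{b}\rceil}b)\mapsto(\zc(t^{-\lceil\v{b}\rceil}a_j))_{n_{l-1}<j\le n_l}$, one sees that $\red_{r_l}(t^{m_b}b_i)$ corresponds precisely to the $i$-th row of $M_{r_l}$. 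Consequently, for each $l$ the family $\{\red_{r_l}(t^{m_b}b)\mid b\in\bb_{r_l+\Z}\}$ is $k$-linearly independent in $V_{r_l}$ if and only if the rows of $M_{r_l}$ are $k$-linearly independent, i.e. if and only if $\mathrm{rank}\,M_{r_l}=\#I_\bb(r_l)$ (the number of its rows); this holds vacuously when $\bb_{r_l+\Z}=\emptyset$.

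The one point requiring a little care — and the step I expect to be the main obstacle — is passing from "$\mathrm{rank}\,M_{r_l}=\#I_\bb(r_l)$ for all $l$" to the statement's condition "$\mathrm{rank}\,M_{r_l}=m_l$ for all $l$", since for a not-yet-reduced basis $\#I_\bb(r_l)$ need not equal $m_l$ and $M_{r_l}$ need not be square. Here I would use that $I_\bb(r_1),\dots,I_\bb(r_\kappa)$ partition $\{1,\dots,n\}$, so $\sum_l\#I_\bb(r_l)=n=\sum_l m_l$, together with $\mathrm{rank}\,M_{r_l}\le\min\{\#I_\bb(r_l),m_l\}$: if every $\mathrm{rank}\,M_{r_l}=\#I_\bb(r_l)$ then $\#I_\bb(r_l)\le m_l$ for each $l$, and summing forces $\#I_\bb(r_l)=m_l$, whence $\mathrm{rank}\,M_{r_l}=m_l$; conversely $\mathrm{rank}\,M_{r_l}=m_l$ forces $\#I_\bb(r_l)\ge m_l$, and summing again forces equality, so each $M_{r_l}$ is square of full rank. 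Combining this equivalence with the translation above and with Theorem \ref{SS} yields the corollary; everything besides this bookkeeping is a routine unwinding of definitions.
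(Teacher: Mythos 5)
Your proof is correct and follows the same route the paper intends: the corollary is obtained by transporting Theorem \ref{SS} through the coordinate isomorphisms $V_{r_l}\cong k^{m_l}$, with the identification $t^{m_b}b=t^{-\lceil\v{b}\rceil}b$. The counting argument you give to reconcile the condition $\mathrm{rank}\,M_{r_l}=\#I_\bb(r_l)$ coming from Theorem \ref{SS} with the statement's condition $\mathrm{rank}\,M_{r_l}=m_l$ is exactly the detail the paper leaves implicit, and you handle it correctly.
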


\subsubsection*{The algorithm}
The initial basis $\bb$ is given by the rows $T_1,\dots,T_n$ of a matrix $T\in \gl_n(K)$. As argued for Algorithm \ref{Algo0}, we may always assume that $T$ has polynomial entries.

We split the basis $\bb$ of $E$ into subsets $\bb_r=\{b\in\bb\mid \v{b}\equiv r \bmod\Z\}$ for any $r\in \{r_1,\dots,r_\kappa\}$, and apply for each of these subsets reduction steps as we did in  Algorithm \ref{Algo0}. Unfortunately, the length of a reduced vector $b+\alpha$ may not lie in the same class as $\v{b}$ modulo $\Z$. Therefore, it may happen that the subsets $\bb_r$ change after any reduction step. 

Recall that $\mathrm{LT}_n(k)$ is the set of all $P\in \gl_n(k)$ which are lower triangular with $1$ at the diagonal, up to row permutation.

\begin{algorithm} \caption{Basis reduction for $E=\bot_{l=1}^\kappa \kk^{m_l}(r_l)$}\label{Algo1}\begin{algorithmic}[1]

\REQUIRE $T\in\gl_n(K)\cap A^{n\times n}$.		
\ENSURE Reduced basis of the lattice generated by the rows of $T$.\\[0.25 cm]
\STATE   $\mathrm{vals}\leftarrow [r_1,\dots,r_\kappa]$
\STATE  $\iota\leftarrow 1$
\WHILE{$\iota\leq \#\mathrm{vals}$}\label{for}
\STATE $\bb\mathrm{vals}\leftarrow[  \v{T_1},\dots,\v{T_n}]$
\STATE Sort $\bb\mathrm{vals}$ increasingly ordered and apply changes to the rows of $T$
\STATE Determine $1\leq l\leq \kappa$ with $\mathrm{vals}[\iota]\equiv r_l\bmod \Z$
\STATE Determine all $1\leq e_1,\dots,e_f\leq n$ with $\bb \mathrm{vals}[e_i]\equiv \mathrm{vals}[l]\bmod \Z$
\STATE $M\leftarrow (\zc(t^{-\lceil\bb\mathrm{vals}[e_i]\rceil }t_{e_i,j}))_{1\leq i\leq f,n_{l-1}< j\leq n_l}\in k^{f\times m_l}$ 
\STATE Compute $P=(p_{i,j})\in \mathrm{LT}_f(k)$ s.t. $M':=PM$ is in row echelon form
\STATE $s\leftarrow \mathrm{rank}(M')$\IF{$s=f$}\label{remm1}\IF{$f<m_l$ and $\mathrm{vals}[\iota] \notin \{\mathrm{vals}[s] \mid s> \iota\}$}\label{remm2}
\STATE $\mathrm{Append}(\mathrm{vals},\mathrm{vals}[\iota] )$
\ENDIF\ELSE
\FOR{$i=s+1,\dots,f$}\label{loop}
\STATE  $u_i\leftarrow \max\{1\leq j\leq f\mid p_{i,j}\neq 0\}$\label{strange}
\STATE $T_{e_{u_i}}\leftarrow T_{e_{u_i}}+\sum_{j=1}^{u_i-1}t^{\lceil\bb\mathrm{vals}[e_{u_i}] \rceil-\lceil \bb\mathrm{vals}[e_j]\rceil} p_{i,j} T_{e_j}$\label{line21}
\STATE $\bb\mathrm{vals}[e_{u_i}]\leftarrow \v{T_{e_{u_i}}}$\IF{$\bb\mathrm{vals}[e_{u_i}]-\lceil\bb\mathrm{vals}[e_{u_i}]\rceil\notin \{\mathrm{vals}[s]\mid s> \iota\}$}\label{remm3}
\STATE $\mathrm{Append}(\mathrm{vals},\bb\mathrm{vals}[e_{u_i}] -\lceil\bb\mathrm{vals}[e_{u_i}]\rceil)$
\ENDIF\ENDFOR\ENDIF
\STATE $\iota\leftarrow \iota+1$
\ENDWHILE\STATE \textbf{return} $T$

\end{algorithmic}\end{algorithm}
\newpage
Let us add some comments to clarify some parts of the algorithm.

\noindent{\bf Steps 12-14. }If no reduction step can be a applied but the number of vectors in $\bb$ of length $r_l \bmod \Z$ is lower than $m_l$, we have not found enough vectors in the set $\bb_{r_l}$. Later, there will occur (after several reduction steps) new vectors with length $r_l$ modulo $\Z$. Therefore, we must reconsider the value $r_l=\mathrm{vals}[\iota]$ afterwards.

\noindent{\bf Steps 20-22. }If the length $r$ of the reduced vector does not coincide with the length of the original vector modulo $\Z$. Then, we have to reconsider the class $r$ mod $\Z$ later.

\begin{remark}
By Proposition \ref{basic}, for a reduced basis $\bb=(b_1,\dots,b_n)$ the values $\v{b_i}$, $1\leq i\leq n$, are the successive minima of $L$. Moreover, for a real number $r$, Proposition \ref{basic} shows that the $k$-vector space $L_{\leq r}$ admits the basis
$$
\{b_it^{j_i}\mid 1\leq i\leq n,\quad 0\leq j_i\leq \lfloor r-\v{b_i}\rfloor\}.
$$
 Hence, Algorithm \ref{Algo1} can also be adapted to compute these objects.
\end{remark}

Let us illustrate the algorithm with an example.

\begin{example}\label{ex3}

\rm{Let $K=\F_3(t)$ be the rational function field over $\F_3$, the finite field of three elements. We consider the normed space $$E=\kk(-1/2)\,\bot\,\,\kk(-1/3)\,\bot\,\, \kk(-1/4).$$ We have $r_1=-1/2$, $r_2=-1/3$, and $r_3=-1/4$ with multiplicities $m_l=1$ for $1\leq l\leq 3$. Consider the following basis $\bb=(b_1,b_2,b_3)$ of $E$:
$$
b_1=\left(t^2 ,t^2+1,0\right),\quad b_2=\left(t(t^2+1),t,t^4+1\right),\quad b_3=\left(0,t^4(t+1),t^4\right).
$$
The norm on $E$ is given by $\v{(a_1,a_2,a_3)}=\max\{|a_1|-1/2,|a_2|-1/3,|a_3|-1/4\}$; hence, $\v{b_1}=5/3$, $\v{b_2}=15/4$ and $\v{b_3}=14/3$.  

The basis $\bb$ is not reduced, as $\bb$ contains no vector of length in $r_1+\Z$. We apply a reduction step focussing our attention on the set $\bb_{r_2}=\{b_1,b_3\}$. We consider 
$$
M_{r_2}=\left(\begin{array}{c}\zc\big(t^{-\lceil 5/3\rceil}(t^2+1)\big )  \\\zc\big(t^{-\lceil 14/3\rceil}t^4(t+1)\big)\end{array}\right)=\left(\begin{array}{c}1 \\1 \end{array}\right)\in \F_3^{2\times 1 }
$$
and transform $M_{r_2}$ into row echelon form, $P M_{r_2}=M'$, with
$$
P=\left(\begin{array}{cc}1  & 0  \\2 &  1\end{array}\right),\quad M'=\left(\begin{array}{c}1  \\0 \end{array}\right).
$$
We perform the reduction step $(\tilde{b}_1\  \tilde{b}_3)^\tp=R\cdot (b_1 \ b_3)^\tp$ with the transition matrix $R$  defined as in (\ref{DefR}):
$$
R=\dia(t^2,t^5)\cdot P\cdot \dia(t^{-2},t^{-5})=\left(\begin{array}{cc}1 & 0   \\2t^3  & 1\end{array}\right)\in\gl_2(\F_3[t]).
$$
We obtain $\tilde{b}_1=b_1$ and 
$\widetilde{b}_3=2t^3\cdot b_1+b_3=\left(2t^5 ,t^3(t+2) ,t^4\right)$, with $\v{\tilde{b}_3}=7/2$. Note that $\tilde{b}_3$ and $b_3$ do not have the same length modulo $\Z$.

The basis $(\tilde{b}_1,b_2,\tilde{b}_3)$ is reduced, since $\v{\tilde{b}_1}, \v{b_2}$, $\v{\tilde{b}_3}$ are different modulo $\Z$. }
\end{example}

\subsection{Complexity}
We are interested in the complexity of Algorithms \ref{Algo0} and \ref{Algo1}. All estimations are expressed in the number of necessary operations in $k$. Recall that $\sig(E)$ denotes the set of different lengths modulo $\Z$ of all nonzero vectors in the normed space $(E,\vv)$.
\begin{lemma}\label{loopnumber} 
Let $\bb$ be a basis of an $n$-dimensional normed space $E$. The number of reduction steps to transform $\bb$ into a reduced basis is bounded by 
$$
\#\sig(E)\cdot \lfloor O\!D(\bb)\rfloor +(\#\sig(E)-1)n.
$$
\end{lemma}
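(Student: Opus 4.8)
The plan is to track how the orthogonal defect $O\!D$ decreases over the course of the algorithm, separating the effect of reduction steps proper from the bookkeeping overhead caused by vectors migrating between the residue classes $r_1,\dots,r_\kappa$ modulo $\Z$. Recall from Section~\ref{reductionalgorithm} that each reduction step keeps $|d(\bb)|$ invariant and decreases $\vol(\bb)$, hence decreases $O\!D(\bb)=\vol(\bb)-\vol(E)-|d(\bb)|$; the quantity $O\!D$ is a nonnegative integer by Theorem~\ref{reducecriteria} applied to the orthonormal normalizations, so it can drop at most $\lfloor O\!D(\bb)\rfloor$ times by a full unit. The subtlety is that an individual reduction step of the form \eqref{refreducstep} only guarantees $\v{\tilde b_j}<\v{b_j}$, i.e. a drop of at least $1$ in that coordinate's contribution to $\vol(\bb)$ \emph{relative to the class currently being processed}; when the class $r_\ell=\mathrm{vals}[\iota]$ is exhausted (Steps~11--14 or the $\mathrm{for}$-loop of Steps~16--22) the index $\iota$ advances, and the new class may again require work.

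First I would observe that during a single pass over a fixed class $\rho=\mathrm{vals}[\iota]$ (one execution of the outer \texttt{while} body of Algorithm~\ref{Algo1}), the batch of reduction steps \eqref{refreducstep} applied to the vectors of current length $\equiv\rho$ is performed all at once, so it counts as the $n-\mathrm{rank}(M)\le n$ reduction steps bundled there; crucially each such batch strictly lowers $\lfloor O\!D\rfloor$ by at least $1$, because after the batch the corresponding residue matrix $M_\rho$ has full row rank on the vectors that remain in that class (Corollary~\ref{gencri}), so that \emph{further} progress on $\rho$ can only come from vectors that migrate in later. Thus the number of \emph{productive} passes — those that actually execute a reduction step — is at most $\lfloor O\!D(\bb)\rfloor$, and each contributes at most $n$ reduction steps, giving the term $\#\sig(E)\cdot\lfloor O\!D(\bb)\rfloor$ once we account (generously) for the fact that a productive pass might be spread over all $\#\sig(E)$ classes. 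The remaining passes are \emph{unproductive}: they either find $s=f$ with $f=m_\ell$ (the class is already reduced) and merely advance $\iota$, or they hit Steps~12--14 and re-append $\mathrm{vals}[\iota]$ for later. The number of re-appended entries is bounded because the total number of vectors is $n$ and each can enter a new class at most once per ``productive'' event elsewhere; carefully, the list $\mathrm{vals}$ grows by at most $(\#\sig(E)-1)$ entries per unit drop of $O\!D$ distributed across the classes, yielding the additive $(\#\sig(E)-1)n$.

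The main obstacle, and the step I would spend the most care on, is the amortized counting of how many times a vector can change its residue class and thereby force a class to be reprocessed: I need to argue that each reduction step \eqref{refreducstep} can cause at most one vector to acquire a length in a class $r_\ell\neq\rho$ not already queued, and that the append in Steps~20--22 is therefore charged to a genuine $O\!D$-decreasing step. Combining this ``at most one new class-entry per reduction step'' bound with the at-most-$\lfloor O\!D(\bb)\rfloor$ productive passes, each of size $\le n$, and noting that a vector landing in an already-present future class costs nothing, I would conclude that the total number of reduction steps is at most $\#\sig(E)\cdot\lfloor O\!D(\bb)\rfloor$ from the productive passes plus at most $(\#\sig(E)-1)n$ from the at most $n$ migrations into each of the $\#\sig(E)-1$ other classes, which is exactly the claimed bound.
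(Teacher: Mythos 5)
Your proposal has two genuine gaps. First, the claim that each batch of reduction steps ``strictly lowers $\lfloor O\!D\rfloor$ by at least $1$'' is unjustified and in general false: a reduction step only guarantees $\v{\tilde b}<\v{b}$, and since lengths take values in $r_1+\Z,\dots,r_\kappa+\Z$, the drop can be a small fraction (e.g.\ from $15/4$ to $11/3$), so $O\!D$ may decrease by less than $1$ in a pass and $\lfloor O\!D\rfloor$ need not decrease at all. Your count of at most $\lfloor O\!D(\bb)\rfloor$ ``productive passes'' therefore does not go through. Second, even granting that count, your own accounting gives at most $n$ reduction steps per productive pass, hence $n\cdot\lfloor O\!D(\bb)\rfloor$ steps in total, not $\#\sig(E)\cdot\lfloor O\!D(\bb)\rfloor$; the remark that one should ``account generously for the fact that a productive pass might be spread over all $\#\sig(E)$ classes'' does not convert the factor $n$ into a factor $\#\sig(E)$. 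The amortized analysis of class migrations is likewise only sketched and is not what produces the additive term $(\#\sig(E)-1)n$.

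The missing idea is a per-vector pigeonhole argument, which is what the paper uses and which makes the pass/migration bookkeeping unnecessary. Follow a single vector through its successive modifications $b_i\to b_i^{(1)}\to\cdots\to b_i^{(R_i)}=\widetilde b_i$ and set $D_i=\v{b_i}-\v{\widetilde b_i}$, so that $O\!D(\bb)=\sum_i D_i$ (the reduced basis has orthogonal defect $0$ and $|d(\bb)|$ is invariant under reduction steps). Among any $\kappa+1$ consecutive lengths $\v{b_i^{(j)}}$, with $\kappa=\#\sig(E)$, two must be congruent modulo $\Z$, and since the sequence is strictly decreasing those two differ by at least $1$; hence every $\kappa$ consecutive reduction steps applied to the same vector lower its length by at least $1$, giving $R_i\le\kappa\lfloor D_i\rfloor+\kappa-1$. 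Summing over $i$ and using $\sum_i\lfloor D_i\rfloor\le\lfloor O\!D(\bb)\rfloor$ yields the stated bound, independently of the order in which Algorithm \ref{Algo1} visits the classes.
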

\begin{proof}
Let $\bb=(b_1,\dots,b_n)$ and let $\widetilde{\bb}=(\widetilde{b}_1,\dots,\widetilde{b}_n)$ be a reduced basis obtained from $\bb$. Each vector $b_i$ is changed by several reduction steps until we obtain the vector $\widetilde{b}_i\in \widetilde{\bb}$. Let us denote by $R_i$ the number of these reduction steps; that is
$$
b_i\rightarrow b_i^{(1)}\rightarrow\dots \rightarrow b_i^{(R_i)}=\widetilde{b}_i.
$$
If we denote $D_i:=\v{b_i}-\v{\widetilde{b}_i}$, then $O\!D(\bb)=D_1+\cdots +D_n$.

Let $\kappa:=\#\sig(E)$. If we apply $\kappa$ consecutive reduction steps to any vector $b\in \langle \bb\rangle_A$:
\begin{align}\label{veclengtho}
b=b^{(0)}\rightarrow b^{(1)}\rightarrow \cdots \rightarrow b^{(\kappa)}
\end{align}
then, $\v{b}-\v{b^{(\kappa)}}\geq 1$. In fact, since the lengths of all nonzero vectors in $E$ have only $\kappa$ possibilities modulo $\Z$, among the $\kappa+1$ vectors in (\ref{veclengtho}) there must be a coincidence. If $0\leq j< l\leq \kappa$ satisfy $\v{b^{(l)}}\equiv \v{b^{(j)}} \bmod \Z$ then:
$$
 \v{b}-\v{b^{(\kappa)}}\geq  \v{b^{(j)}}-\v{b^{(l)}}\geq 1.
$$
This argument shows that $R_i\leq \lfloor D_i \rfloor\kappa+\kappa-1$. Therefore, the total number of reduction steps is $R_1+\cdots+R_n\leq  \lfloor O\!D(\bb) \rfloor\kappa+(\kappa-1)n$.
\end{proof}

We introduce heights of rational functions in order to measure the complexity of the reduction algorithms.
\begin{definition}\label{matrixhight}
For $g=f/h\in K$, with coprime polynomials $f,h\in A$, we define the \emph{height} of $g$ by
$$
h(g):=\max\{|f|,|h|\}.
$$
The \emph{height} of a matrix $T=(t_{i,j})\in K^{n\times m}$ is defined to be
$$
h(T):=\max\{h(t_{i,j})\mid 1\leq i\leq n,\quad 1\leq j\leq m\}.
$$
\end{definition}
The next lemma presents some properties of the height, which will be useful for the complexity analyses of subsequent algorithms.

\begin{lemma}\label{hightinversemat}
Let $T,T'\in K^{n\times n}$. 
\begin{enumerate}
\item $h(T\cdot T')\leq h(T)+h(T').$
\item If $T$ is invertible, then
$ |\det T|$, $|\det T^{-1}|$, $h(T^{-1})\leq nh(T)$.
\end{enumerate}
\end{lemma}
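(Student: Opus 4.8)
The plan is to prove the two items separately, both by direct computation with the definition of the height. For item (1), I would write $T=(t_{i,j})$ and $T'=(t'_{i,j})$, so that the $(i,j)$-entry of $T\cdot T'$ is $\sum_{l=1}^n t_{i,l}t'_{l,j}$. The key observation is that the height behaves like a (non-archimedean-flavored) absolute value on sums and products of rational functions: if $g=f/h$ and $g'=f'/h'$ are written in lowest terms, then $gg'=ff'/(hh')$ — possibly not in lowest terms after cancellation — and hence $h(gg')\le h(g)+h(g')$; similarly, bringing $g+g'$ over the common denominator $hh'$ gives $(fh'+f'h)/(hh')$, whose numerator and denominator both have degree at most $h(g)+h(g')$, so $h(g+g')\le h(g)+h(g')$ (and of course $h(g+g')\le\max\{h(g),h(g')\}$ is false in general, so one really does need the additive bound here). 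Iterating the sum bound over the $n$ terms of the dot product costs nothing extra in the additive world since one may absorb the ``$+\log n$'' — in fact there is no such term: a sum of $n$ rational functions over a common denominator of degree $\le \max h(\cdot)$ has numerator of degree $\le \max h(\cdot)$ as well, so $h\bigl(\sum_{l}t_{i,l}t'_{l,j}\bigr)\le h(T)+h(T')$. Taking the maximum over $i,j$ yields item (1).

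For item (2), I would use Cramer's rule. The determinant $\det T$ is a sum of $n!$ signed products of $n$ entries each; by the computation above (the product bound, then the sum bound over a common denominator), $|\det T|\le n\cdot h(T)$, and since $\det T\in K^*$ we also get $|\det T^{-1}|=-|\det T|\le n\,h(T)$ (note $|\det T|$ could be negative, so the bound $|\det T^{-1}|\le n h(T)$ is the one that matters, and it holds because $|\det T^{-1}| = -|\det T|$ and $|\det T|\geq -nh(T)$... actually one needs $|\det T|\geq$ something; here $|\det T^{-1}| \le n h(T^{-1})$ would follow from the same bound applied to $T^{-1}$, but that is circular, so instead bound $|\det T^{-1}|$ directly: $\det T^{-1} = 1/\det T$, and writing $\det T = p/q$ in lowest terms with $\max\{|p|,|q|\}\le nh(T)$ gives $|\det T^{-1}| = |q|-|p|\le nh(T)$). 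Finally each entry of $T^{-1}$ is, up to sign, an $(n-1)\times(n-1)$ minor of $T$ divided by $\det T$; the minor has $|\,\cdot\,|\le (n-1)h(T)$ as a polynomial combination, and writing it as a single fraction over the denominators appearing in $T$ one sees its height is $\le (n-1)h(T)$; dividing by $\det T$ (height $\le nh(T)$) and applying item (1)'s product bound for the reciprocal gives $h(T^{-1})\le h(\text{minor})+h(1/\det T)\le (n-1)h(T)+nh(T)$ — which is too weak. So the bookkeeping must be done more carefully: I would instead observe that every entry of $T^{-1}$ can be written with a common denominator equal to $\det T$ times the product of the denominators of $T$, and then reduce; the honest way is to clear denominators in $T$ first, writing $T = \frac1{d}\,T_0$ with $T_0$ a polynomial matrix and $d\in A$ of degree $\le nh(T)$... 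This denominator-clearing normalization is the real content.

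The main obstacle, then, is not any deep idea but getting the constants right in item (2): a naive application of Cramer's rule with the crude product/sum bounds overshoots. The clean route is to first normalize $T$ by multiplying by the lcm of all its entry-denominators, reducing to the case $T\in A^{n\times n}$ with $h(T)=\max_{i,j}|t_{i,j}|$ the degree; for such a $T$ one has $|\det T|\le n\,h(T)$ directly, each cofactor has degree $\le (n-1)h(T)$, and $h(T^{-1})\le |\det T| \cdot(\text{not needed})$ — rather, $T^{-1}$ has entries $(\text{cofactor})/\det T$, whose height is $\max\{(n-1)h(T),\,|\det T|\}\le nh(T)$. Then one checks that the normalization does not increase any of the three quantities $|\det T|$, $|\det T^{-1}|$, $h(T^{-1})$ beyond the stated bound (multiplying $T$ by a scalar $c\in K^*$ multiplies $\det T$ by $c^n$ and leaves $T^{-1}\cdot c$ — so $h(T^{-1})$ transforms, and one picks the normalization that makes the inequality tightest). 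I expect this scalar-tracking to be the only subtle point; everything else is the two elementary facts $h(g+g')\le h(g)+h(g')$ and $h(gg')\le h(g)+h(g')$ applied repeatedly.
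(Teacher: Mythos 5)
The decisive step of your item (1) is wrong: you assert that a sum of $n$ rational functions, each of height at most $H$, can be put over a common denominator of degree at most $H$. The common denominator is the least common multiple of the $n$ individual denominators; each of those has degree at most $H$, but their lcm can have degree as large as $nH$. Concretely, for $T=\bigl(\begin{smallmatrix}1/t & 1/(t+1)\\ 1&1\end{smallmatrix}\bigr)$ and $T'=\bigl(\begin{smallmatrix}1&1\\1&1\end{smallmatrix}\bigr)$ one has $h(T)=1$, $h(T')=0$, while $(T\cdot T')_{1,1}=1/t+1/(t+1)=(2t+1)/(t^{2}+t)$ has height $2$. So the inequality of item (1) itself fails for general matrices over $K$; your two (correct) elementary bounds $h(g+g')\le h(g)+h(g')$ and $h(gg')\le h(g)+h(g')$ only yield $h(T\cdot T')\le n\bigl(h(T)+h(T')\bigr)$. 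Item (2) has the same disease, and the ``scalar-tracking'' you defer cannot be completed: the lcm $d$ of all entry-denominators may have degree up to $n^{2}h(T)$, so normalizing to $dT\in A^{n\times n}$ and back inflates every bound by extra factors of $n$. Indeed the statement is false as well: for $T=\bigl(\begin{smallmatrix}1/t & 1/(t+2)\\ 1/(t+3)&1/(t+1)\end{smallmatrix}\bigr)$ over $\mathbb{Q}(t)$ one computes $\det T=(4t+6)/\bigl(t(t+1)(t+2)(t+3)\bigr)$, whence $|\det T^{-1}|=3>2=nh(T)$, and likewise $h(T^{-1})=3$.

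For comparison, the paper's own proof (item (1) is declared obvious; item (2) is a direct Cramer's-rule computation) stumbles at the same spot: it passes from $\pm|t_{1,\sigma(1)}\cdots t_{n,\sigma(n)}|\le nh(T)$ to $\pm|\det T|\le nh(T)$, but the minus sign does not survive summation, since the degree of a sum of rational functions can drop below the degrees of its terms; and it identifies $h(a/b)$ with $\max\{|a|,|b|\}$ for rational $a,b$, which is only valid for coprime polynomials. Both your argument and the paper's become correct precisely when the matrices have polynomial entries --- there your cofactor computation for item (2) is clean and gives $|\det T|,\ |\det T^{-1}|,\ h(T^{-1})\le nh(T)$. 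In general one must either add that hypothesis or settle for the weaker bounds carrying extra powers of $n$, which is all the paper actually needs, since the lemma is only invoked inside $O(\cdot)$ estimates that absorb polynomial factors of $n$.
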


\begin{proof}
The first statement is obvious. Suppose that $T$ is invertible. For any permutation $\sigma$ of $\{1,2,\dots,n\}$ we have 
$$
\pm |t_{1,\sigma(1)}\cdots t_{n,\sigma(n)}|=\pm\sum_{i=1}^n|t_{i,\sigma(i)}|\leq \sum_{i=1}^nh(t_{i,\sigma(i)})\leq nh(T).
$$
This shows that $\pm |\det(T) |\leq nh(T)$; thus, $|\det(T^{-1})|=-|\det(T) |\leq nh(T)$.

Denote by $T_{i,j}$ the matrix which arises from deleting the $i$-th row and the $j$-th column in $T$. The entries $s_{i,j}$ of $T^{-1}$ may be computed as
$$
s_{i,j}=(-1)^{i+j}\frac{\det (T_{j,i})}{\det (T)}.
$$
Hence, $h(s_{i,j})=\max\{|\det (T_{j,i})|,|\det (T)|\}\leq nh(T)$.
\end{proof}

\begin{lemma}\label{odbbound1}
Let $\bb$ and $\bb'$ be bases of the $n$-dimensional normed space $E$ and let $\bb'$ be orthonormal. Denote by $T$ the transition matrix from $\bb$ to $\bb'$. Then, $O\!D(\bb)< n(2h(T)+1)$.
\end{lemma}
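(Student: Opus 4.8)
The plan is to bound each of the three terms in the definition $O\!D(\bb)=\vol(\bb)-\vol(E)-|d(\bb)|$ separately, using the transition matrix $T$ from $\bb$ to the orthonormal basis $\bb'$. First I would control $\vol(\bb)$. Since $\bb'$ is orthonormal we have $-1<\v{b'_j}\le 0$ for all $j$, and if $b_i=\sum_j t_{i,j}b'_j$ then reduceness of $\bb'$ gives $\v{b_i}=\max_j\{\v{t_{i,j}b'_j}\}\le \max_j\{|t_{i,j}|\}\le \max_j h(t_{i,j})\le h(T)$; here I use $|t_{i,j}|\le h(t_{i,j})$, which is immediate from Definition \ref{matrixhight} (the degree of a quotient of coprime polynomials is at most the max of the two degrees). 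Summing over $i$ yields $\vol(\bb)=\sum_i\v{b_i}\le nh(T)$.

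Next I would bound the remaining two terms from below. For $\vol(E)$: since it is the volume of an orthonormal basis, each length lies in $(-1,0]$, so $\vol(E)=\sum_j\v{b'_j}>-n$, i.e. $-\vol(E)<n$. For $|d(\bb)|$: by definition $d(\bb)$ is the fractional ideal generated by $\det T$, so $|d(\bb)|=|\det T|$; and by Lemma \ref{hightinversemat}(2) applied to $T^{-1}$ (which is the transition matrix in the other direction and has height $\le nh(T)$), we get $|\det T|=-|\det T^{-1}|\ge -n\,h(T^{-1})\ge -n\cdot nh(T)$. Actually more simply: $|\det T|\ge -nh(T)$ directly, since the argument in the proof of Lemma \ref{hightinversemat}(2) shows $-|\det T|=|\det T^{-1}|$ wait — I should just invoke $|\det T^{-1}|\le n h(T^{-1})\le n\cdot n h(T)=n^2h(T)$ and $|\det T|=-|\det T^{-1}|\ge -n^2 h(T)$. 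Either way $-|d(\bb)|$ is bounded above by a quantity linear in $n$ and $h(T)$.

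Assembling: $O\!D(\bb)=\vol(\bb)-\vol(E)-|d(\bb)|\le nh(T)+n+ n^2h(T)$, which is already a bound but too weak compared to the claimed $n(2h(T)+1)$. So I would instead get a sharper lower bound on $|d(\bb)|$: note $|d(\bb)|=|\det T|\ge -\sum_i\max_j h(t_{i,j})\ge -n h(T)$ is false in general, so the efficient route is to use Hadamard's inequality, which gives $|d(\bb)|=|\det T|\le\vol(\bb)-\vol(E)$, hence $O\!D(\bb)\ge0$ (already known) — that is the wrong direction. The right tool is: each summand of $\det T$ over a permutation $\tau$ has degree $\sum_i|t_{i,\tau(i)}|\ge -\sum_i h(t_{i,\tau(i)})\ge -nh(T)$, so $|\det T|\ge -nh(T)$, giving $-|d(\bb)|\le nh(T)$. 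Combined with $\vol(\bb)\le nh(T)$ and $-\vol(E)<n$ we obtain $O\!D(\bb)< nh(T)+n+nh(T)=n(2h(T)+1)$, as claimed.

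\textbf{Main obstacle.} The only subtle point is getting the correct \emph{lower} bound $|\det T|\ge -nh(T)$: one cannot bound a determinant's degree below by naively summing, since cancellation between the $n!$ terms could in principle drop the degree far below any single term's degree — but it cannot raise it, and in the non-archimedean degree setting each individual term has degree $\ge -nh(T)$, so the maximum (hence $|\det T|$, which equals the degree of the leading surviving term) is $\ge$ the degree of whichever term dominates, which is at least $-nh(T)$ only if some term of degree $\ge -nh(T)$ does not cancel. The clean fix is to argue on $T^{-1}$ instead: $|d(\bb)|=|\det T|=-|\det T^{-1}|$, and $|\det T^{-1}|\le n\,h(T^{-1})$ would give $|d(\bb)|\ge -nh(T^{-1})$, which is too weak. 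So one really does need the direct estimate, and the honest justification is that $|\det T|\ge\min_\tau(\deg\text{ of term }\tau)$ is \emph{not} valid; rather one uses $|\det T| = -|d(\bb)^{-1}|$ where $d(\bb)^{-1}$ is generated by $\det T^{-1}=$ (transition matrix from $\bb'$ to $\bb$), whose entries are $A$-linear — indeed $\bb$ generates a lattice containing no denominators relative to... This is the point requiring care; the resolution is that $T^{-1}$ expresses the orthonormal $b'_j$ in terms of $b_i$, its degree-$h$ is $\le nh(T)$ by Lemma \ref{hightinversemat}(2), and $|d(\bb)|=|\det T|=-|\det(T^{-1})|\ge -nh(T^{-1})\ge -n\cdot nh(T)$ is still too lossy, so ultimately the tight constant $2$ forces using $\vol(\bb)\le nh(T)$ together with the Hadamard relation $|d(\bb)|\le \vol(\bb)-\vol(E)$ rearranged as $O\!D(\bb)=\vol(\bb)-\vol(E)-|d(\bb)|$ and bounding $|d(\bb)|\ge \vol(\bb)-\vol(E)-O\!D(\bb)$ circularly — so the genuine argument must bound $|\det T|$ from below by $-nh(T)$ via the entrywise bound $|t_{i,j}|\ge -h(t_{i,j})\ge -h(T)$ and the fact that in the ultrametric degree, $|\det T|\ge \min_i|t_{i,i}|+\cdots$; I expect the author handles this by the permutation-sum estimate $|\det T|\ge -nh(T)$ stated as the dual of the proof of Lemma \ref{hightinversemat}(2), and that is the step I would write most carefully.
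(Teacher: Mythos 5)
Your decomposition is exactly the paper's: write $O\!D(\bb)=\vol(\bb)-\vol(E)-|d(\bb)|$, bound $\vol(\bb)\le nh(T)$ via $\v{b_i}=\max_j\{|t_{i,j}|+\v{b'_j}\}\le\max_j\{|t_{i,j}|\}\le h(T)$, and bound $-\vol(E)<n$ from orthonormality. Those two steps are correct and identical to the paper's. The third ingredient, $-|d(\bb)|=-|\det T|\le nh(T)$, is where your write-up breaks down: the ``Main obstacle'' paragraph cycles through several candidate justifications, retracts each of them, and ends without establishing the inequality. As written, that is a genuine gap — the conclusion $O\!D(\bb)<n(2h(T)+1)$ needs precisely this bound, and you do not prove it. In the paper this step is a one-line citation of Lemma \ref{hightinversemat}(2), which asserts $|\det T|,\ |\det T^{-1}|\le nh(T)$ for invertible $T$; so the intended proof of the present lemma is simply: quote that lemma to get $-|\det T|=|\det T^{-1}|\le nh(T)$ and add the three estimates.

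That said, your suspicion about the delicate point is well placed. The termwise permutation estimate $\bigl|\,t_{1,\tau(1)}\cdots t_{n,\tau(n)}\bigr|\ge -nh(T)$ gives a \emph{lower} bound on each summand of $\det T$, but in the ultrametric degree a lower bound on summands does not pass to the sum (cancellation can only lower the degree), so it yields only the upper bound $|\det T|\le nh(T)$ and not the lower one; this is exactly how the cited Lemma \ref{hightinversemat}(2) argues, so the difficulty you ran into is inherited from there rather than resolved by it. The routes you tried (clearing denominators columnwise, or passing to $T^{-1}$ and using $h(T^{-1})\le nh(T)$) correctly give only $|\det T|\ge -n^2h(T)$, which is too weak for the stated constant. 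So the honest summary is: your proof matches the paper's except for the determinant bound, which you must either take from Lemma \ref{hightinversemat}(2) as the paper does, or accept the weaker $n^2$-type estimate with a correspondingly weaker conclusion.
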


\begin{proof}
Let $\bb=(b_1,\dots,b_n)$ and $\bb'=(b'_1,\dots,b'_n)$. By definition,
\begin{align}\label{odinlem}
O\!D(\bb)=\sum_{i=1}^n\v{b_i}-\vol(E)-|\det (T)|.
\end{align}
With $T=(t_{i,j})$ we obtain, for $1\leq i\leq n$:
$$
\v{b_i}=\max_{1\leq j\leq n}\{|t_{i,j}|+\v{b'_j}\}\leq \max_{1\leq j\leq n}\{|t_{i,j}|\}\leq h(T).
$$
Hence, $\sum_{i=1}^n\v{b_i}\leq nh(T)$. 
On the other hand, $\vol(E)=\sum_{i=1}^n\v{b_i'}>-n$, since $-1<\v{b'_i}\leq 0$ for all $i$, as $\bb'$ is orthonormal.
Finally, $-|\det(T)|\leq nh(T)$ by item (2) from Lemma \ref{hightinversemat}. Therefore, from (\ref{odinlem}) we deduce $O\!D(\bb)<nh(T)+n+nh(T)= n(2h(T)+1)$.
\end{proof}

\begin{lemma}\label{compl}
Let $\bb'$ be an orthonormal basis of an $n$-dimensional normed space $(E,\v{~})$ and let $\bb$ be a basis of $E$ such that the transition matrix $T=T(\bb\to\bb')$ has polynomial entries. Then, Algorithm \ref{Algo1} takes at most
$$
O(\#\sig(E)( n^4 \cdot h(T)+ n^3 \cdot h(T)^2))
$$
arithmetic operations in $k$ to transform $\bb$ into a reduced basis.
\end{lemma}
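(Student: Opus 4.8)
The plan is to bound the running time of Algorithm~\ref{Algo1} by combining three estimates: (a) a uniform bound on the degrees of all polynomials manipulated during the execution, (b) the cost of a single pass through the \textbf{while} loop expressed in terms of those degrees, and (c) the bound on the number of reduction steps furnished by Lemma~\ref{loopnumber}. Throughout, write $\kappa:=\#\sig(E)$ and $h:=h(T)$.

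First I would establish degree control. Since $\bb'$ is orthonormal, the computation in the proof of Lemma~\ref{odbbound1} gives $\v{b_i}\le h$ for each row $b_i$ of the initial basis; as $\v{(a_1,\dots,a_n)}=\max_j\{|a_j|+\v{b'_j}\}$ with $\v{b'_j}>-1$, this forces every polynomial entry to have degree $\le h$. A reduction step replaces a single row $b$ by $b+\alpha$ with $\v{b+\alpha}<\v{b}$ and leaves the other rows unchanged, so the lengths of the current rows never increase during the algorithm; they stay $\le h$, and hence all polynomial entries keep degree $\le h$. In particular the monomial exponents $\lceil\cdots\rceil-\lceil\cdots\rceil$ occurring in line~\ref{line21} have absolute value $O(h)$, so every intermediate polynomial the algorithm produces has degree $O(h)$.

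Next I would account for the cost of one pass through the \textbf{while} loop, assuming degrees $O(h)$: sorting the rows and recomputing their lengths, forming the matrix $M$ and reading off its zero coefficients cost $O(n^2)$ operations in $k$; bringing $M\in k^{f\times m_l}$ to row echelon form costs $O(n^3)$; and one reduction step (a single run of the body of the \textbf{for} loop in line~\ref{loop}) rewrites one row as a $k$-linear combination of $O(n)$ monomial-shifted rows of degree $O(h)$, hence costs $O(n^2h)$. By Lemma~\ref{loopnumber} together with Lemma~\ref{odbbound1} the total number $R$ of reduction steps over the whole run satisfies $R=O(\kappa\,n\,h)$, so the reduction steps contribute $O(\kappa\,n^3\,h^2)$ in total, and each of the (at most) $W$ passes contributes the $O(n^3)$ administrative overhead.

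It remains to bound $W$, and this is where I expect the real work to lie. Each pass either performs at least one reduction step — at most $R$ of these — or performs none, in which case $\iota$ is advanced and the current entry of $\mathrm{vals}$ is either discarded or re-appended (lines~\ref{remm2}--\ref{remm3}). I would argue, using the termination statement proved right after Definition~\ref{redstep}, that a fixed length class cannot be re-appended twice without the set of current rows in that class being altered in between — otherwise the matrix $T$ and the whole configuration would recur and the loop would never terminate — and since each reduction step alters a single row, the number of such alterations, hence of reduction-free passes, is $O(R+\kappa)$. This gives $W=O(R+\kappa)=O(\kappa\,n\,h)$, whence the overhead contributes $O(\kappa\,n^4\,h)$, and adding the two contributions yields the claimed $O(\#\sig(E)(n^4h(T)+n^3h(T)^2))$. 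The main obstacle is exactly this last bookkeeping: converting the qualitative termination argument into the quantitative estimate $W=O(R+\kappa)$ by tracking how the multiset of lengths modulo $\Z$ of the current rows evolves under reduction steps and re-examinations.
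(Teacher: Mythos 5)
Your overall strategy is the same as the paper's: control the degrees of all entries (reduction steps never increase row lengths, hence never increase $h(T)$), bound the number of reduction steps by $R=O(\kappa\, n\, h)$ via Lemmas \ref{loopnumber} and \ref{odbbound1}, charge $O(n^2h)$ per reduction step for a total of $O(\kappa\, n^3 h^2)$, and then account separately for the echelon-form overhead. The first three of these are correct and match the paper.

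The gap is in the overhead accounting, exactly where you flag "the main obstacle". Your claim that the number $W$ of passes through the \textbf{while} loop is $O(R+\kappa)$ is not justified, and the supporting argument is invalid: a class $\rho$ with $f<m_l$ independent vectors can be examined and re-appended in lines 12--14 arbitrarily many times \emph{without the rows of class $\rho$ ever changing}, provided reduction steps keep occurring in \emph{other} classes in between -- the configuration of class $\rho$ recurs, but the global configuration does not, so no contradiction with termination arises. The bound one actually gets from the schedule invariant (each class occurs at most once among the entries of $\mathrm{vals}$ not yet processed) is that between two consecutive reduction steps there are at most $O(\kappa)$ passes, i.e.\ $W=O(\kappa R)$. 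Charging $O(n^3)$ per pass, as you do, then yields $O(\kappa^2 n^4 h)$ for the overhead, which exceeds the claimed bound by a factor of $\kappa$. The paper closes this by a finer per-pass cost: within one such window of $\le\kappa$ reduction-free passes the echelon forms are computed for matrices $M_{r_l}\in k^{f_l\times m_l}$ with $\sum_l f_l\le n$ and $\sum_l m_l=n$, so their \emph{combined} cost is at most that of a single $n\times n$ echelon form, namely $O(n^3)$; multiplying by the $O(R)=O(\kappa n h)$ windows gives the stated $O(\kappa\, n^4 h)$. You should replace the estimate $W=O(R+\kappa)$ by this "one sweep over all classes costs $O(n^3)$ in total and must produce a reduction step or terminate" argument.
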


\begin{proof}
By any reduction step in Algorithm \ref{Algo1} the value $O\!D(\bb)$ is decreased strictly. If $\kappa=\#\sig(E)$, according to Lemma \ref{loopnumber} and Theorem \ref{reducecriteria}, the set $\bb$ is reduced after at most $ \lfloor O\!D(\bb)\rfloor\kappa+(\kappa-1)n$ steps. 

Clearly, the runtime of the algorithm is dominated by the transformation of matrices into row echelon form and the realization of reduction steps.

At first we analyze the complexity of the transformation of matrices into row echelon form. Denote by $r_1,\dots,r_\kappa$ the different lengths of vectors in $\bb'$ and $m_1,\dots,m_\kappa$ its multiplicities, so that $n=m_1+\cdots +m_\kappa$. Suppose, that after $i-1$ steps in Algorithm \ref{Algo1} we have transformed the basis $\bb$ into $\bb_i=(b_{i_1},\dots,b_{i_n})$. We can split $\bb_i$ into disjoint subsets
$$
\bb_i=\bb_{r_1}\cup\cdots\cup\bb_{r_\kappa},
$$
where $\bb_{r_k}:=\{b\in\bb_i\mid \v{b}\equiv r_k\bmod \Z\}$, for $1\leq k\leq \kappa$. Assume $\bb_i$ is not reduced. By Corollary \ref{gencri}, for at least one $r_k$, the matrix $M_{r_k}\in k^{\# I_{\bb_i}(r_k)\times m_k}$ has not full rank.
 
In the worst case, we have to transform all matrices $M_{r_1},\dots,M_{r_\kappa}$ into row echelon form until we detect at least one reduction step (i.e. one zero row). The cost for transforming all $M_{r_j}$, $1\leq j\leq \kappa$, into row echelon form is less than or equal to the cost of transforming one $n\times n$ matrix over $k$ into row echelon form (which is equal to $O(n^3)$ operations in $k$ \cite{H.C.}). Hence, the cost of all transformations of matrices into row echelon form along Algorithm \ref{Algo1} is bounded by $O((O\!D(\bb)\kappa+(\kappa-1)n)\cdot n^3)$ operations in $k$. According to Lemma \ref{odbbound1} the last complexity bound can be estimated by $O(\kappa n^4 h(T))$.

Additionally, we compute $A$-linear combinations of the rows of $T$ (line $18$ of Algorithm \ref{Algo1}), where the coefficients are of the form $\alpha t^m$ with $\alpha \in k$ and a nonnegative integer $m$. After any reduction step the degree of the entries in $T$ is less or equal than before; that is, at any level the value of $h(T)$ is not increased. Since the multiplication of a polynomial by a $t$-power is just a shift of the exponents, we can consider the latter $A$-linear combinations of rows of $T$ as $k$-linear combinations. 

The cost of any reduction step applied to the rows of $T$ is $O(n^2h(T))$ operations in $k$. Thus, the total cost of performing all reduction steps of Algorithm \ref{Algo1} is $O(\kappa n^3h(T)^2)$. this ends the proof of the lemma.
\end{proof}

\begin{remark}\label{denominators}
If the transition matrix $T(\bb\to\bb')$ does not belong to $A^{n\times n}$, we must add the cost of finding the $\lcm$ of the entries of each column and the cost of multiplying by them to get rid of denominators. The total cost of the reduction algorithm is then
$O\big(\#\sig(E)\cdot n^4 \cdot  h(T)^2\big)$ operations in $k$.
\end{remark}

In Subsection \ref{semisection} we will present an optimized version of the reduction algorithm (cf. Lemma \ref{reductionspeziell}). 

If $\#\sig(E)=1$, Algorithm \ref{Algo1} coincides with Algorithm \ref{Algo0}. Hence, the complexity bounds for the latter follow immediately from Lemma \ref{compl}.

\begin{corollary}\label{copleximegaeasy}
For $r\in\R$, let $\bb=(b_1,\dots,b_n)$ be a basis of the normed space $E=\kk^n(r)$ such that $T=(b_1 \dots b_n)^\tp$ belongs to $A^{n\times n}$. Algorithm \ref{Algo0} takes $O(n^4  h(T)+ n^3  h(T)^2)$ arithmetic operations in $k$ to transform $\bb$ into a reduced basis. \end{corollary}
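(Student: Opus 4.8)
The plan is to deduce this corollary directly from Lemma~\ref{compl}, specialized to a normed space with a single signature class; most of the work is simply recording that the hypotheses of that lemma are satisfied and that one may normalize $r$.

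First I would normalize $r$ into the interval $(-1,0]$ (equivalently, take $r=0$). This is legitimate because replacing the $\kk^n(r)$-norm by the $\kk^n(r-m)$-norm, for an integer $m$, merely subtracts the constant $m$ from every length and therefore does not affect the defining identity (\ref{defred}) of reducedness: a basis is reduced in $\kk^n(r)$ exactly when it is reduced in $\kk^n(r-m)$. Moreover, when $-1<r\le0$ the rounded lengths satisfy $\lceil\v{T_i}\rceil=\max_j|t_{i,j}|\in\Z$, so the reduction matrix $M$ built by the algorithm is exactly the one in Corollary~\ref{nicecri}, and the algorithm runs as analysed in Lemma~\ref{compl}.

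Next I would check the hypotheses of Lemma~\ref{compl} for $E=\kk^n(r)$ with $-1<r\le0$. Every nonzero vector of $E$ has length in $r+\Z$, so $\sig(E)=\v{E}/\Z=\{r+\Z\}$ and $\#\sig(E)=1$. The standard basis $(e_1,\dots,e_n)$ of $K^n$ has lengths all equal to $r\in(-1,0]$, hence is an orthonormal basis of $E$; and by hypothesis the transition matrix from $\bb$ to $(e_1,\dots,e_n)$ is precisely $T=(b_1\,\dots\,b_n)^\tp\in A^{n\times n}$, which has polynomial entries. So Lemma~\ref{compl} applies. As observed just before the corollary, when $\#\sig(E)=1$ Algorithm~\ref{Algo1} coincides with Algorithm~\ref{Algo0}; hence the estimate $O(\#\sig(E)(n^4 h(T)+n^3 h(T)^2))$ of Lemma~\ref{compl} holds for Algorithm~\ref{Algo0}, and substituting $\#\sig(E)=1$ gives the asserted bound $O(n^4 h(T)+n^3 h(T)^2)$.

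I expect no genuine obstacle here: the corollary is an immediate specialization of Lemma~\ref{compl}. The only point requiring a modicum of care is the normalization step --- one must observe both that shifting $r$ by an integer leaves the output of the algorithm reduced (because reducedness only involves the shift-invariant identity (\ref{defred})) and that, after the shift, the rounding in the algorithm produces integer exponents, so that the reduction matrix $M$ and the row-echelon step behave exactly as in the complexity analysis of Lemma~\ref{compl}.
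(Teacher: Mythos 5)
Your proposal is correct and follows exactly the paper's route: the corollary is obtained by specializing Lemma~\ref{compl} to the case $\#\sig(E)=1$, where Algorithm~\ref{Algo1} coincides with Algorithm~\ref{Algo0}. Your extra normalization of $r$ into $(-1,0]$ (so that the standard basis is orthonormal and the transition matrix stays polynomial) is a point the paper leaves implicit, and it is handled correctly.
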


In practice, the runtime of Algorithm \ref{Algo0} (and Algorithm \ref{Algo1}) is dominated by the realization of the reduction steps. The reason for this is that $h(T)\geq n$ in most of the cases. Under this assumption, the complexity of Algorithm \ref{Algo0} is equal to $O(n^3 h(T)^2)$ operations in $k$. In this context, our reduction algorithm is one magnitude better than the reduction algorithms described in \cite{Len,Gath} and its complexity coincides with the one in \cite{MS}. 

\subsection{Classes of lattices and semi-reduceness}\label{secapp}

In the sequel denote by $E$ an $n$-dimensional $K$-vector space. We consider a norm $\v{~}$ on $E$ and a lattice $L$ in $(E,\v{~})$. Our aim is to construct a \emph{semi-reduced basis} (cf. Definition \ref{semredd}) $\bb$ of $L$, which ``nearly" behaves as a reduced one. 

To this end, we shall consider instead an integer-valued lattice $(L,\vv') $, which almost coincides with $(L,\v{~})$. For instance, for the computation of the vector spaces $(L,\v{~})_{\leq r}$, for $r\in \Z$, it is sufficient to determine a reduced basis $\bb$ of the lattice $(L,\vv')$. Moreover, a reduced basis $\bb$ of $(L,\vv')$ can be used as a precomputation for the reduction algorithm in order to determine a reduced basis of $(L,\v{~})$. In this way, the reduction algorithm can be accelerated.

\begin{definition}\label{latticespacedef}
We define the \emph{norm space} $\NM(E)$ of $E$ as the set of all norms $\vv$ on $E$ such that $(E,\vv)$ becomes a normed space. The \emph{space of lattices} of $E$ is defined to be
$$
\LS(E):=\{(L,\v{~}) \text{ a lattice in }(E,\v{~})\mid \v{~}\in \NM(E) \}.
$$
\end{definition}\label{defequiv}
We introduce an equivalence class on $\NM(E)$.
\begin{definition}\label{equrel}
We say that two norms $\v{~}$ and $\v{~}'$ in $\NM(E)$ are \emph{equivalent}, and we write $\v{~}\sim \v{~}'$, if $\lceil\v{z}\rceil=\lceil\v{z}'\rceil$, for all $z\in E$.

In this case,  we write $(E,\v{~})\sim (E,\v{~}')$ and  $(L,\v{~})\sim (L,\v{~}')$. We say too that these two normed spaces or lattices are equivalent.
\end{definition}

The following results follow easily from the definitions.

\begin{lemma}\label{optimalbasis}
\begin{enumerate}
\item The relation $\sim$ is an equivalence relation on $\NM(E)$. 
\item If $(E,\v{~})$ is a normed space, then $(E,\lceil \v{~}\rceil)$ is a normed space. 
\end{enumerate}
\end{lemma}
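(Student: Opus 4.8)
The plan is to verify each of the two assertions directly from the definitions, since both are essentially bookkeeping once one unwinds Definition \ref{equrel} and Definition \ref{norm}.

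For item (1), I would check the three axioms of an equivalence relation on $\NM(E)$. Reflexivity is immediate: $\lceil\v{z}\rceil=\lceil\v{z}\rceil$ for all $z\in E$. Symmetry follows because the defining condition $\lceil\v{z}\rceil=\lceil\v{z}'\rceil$ for all $z$ is visibly symmetric in $\v{~}$ and $\v{~}'$. Transitivity is equally trivial: if $\lceil\v{z}\rceil=\lceil\v{z}'\rceil$ and $\lceil\v{z}'\rceil=\lceil\v{z}''\rceil$ for all $z$, then $\lceil\v{z}\rceil=\lceil\v{z}''\rceil$ for all $z$. So there is really nothing to prove here beyond noting that ``equality for all $z\in E$'' inherits the equivalence-relation properties of equality on $\R$.

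For item (2), the content is to show that if $\v{~}\in\NM(E)$ then $\lceil\v{~}\rceil$ is again a norm making $(E,\lceil\v{~}\rceil)$ a normed space. Write $\v{~}'=\lceil\v{~}\rceil$, understood as $\lceil -\infty\rceil=-\infty$. I would check the three conditions of Definition \ref{norm}. For the ultrametric inequality, $\v{x+y}\le\max\{\v{x},\v{y}\}$ implies, applying the (monotone) ceiling, $\v{x+y}'\le\max\{\v{x}',\v{y}'\}$. For homogeneity, $\v{ax}=|a|+\v{x}$ with $|a|\in\Z$ gives $\lceil\,|a|+\v{x}\,\rceil=|a|+\lceil\v{x}\rceil$, i.e. $\v{ax}'=|a|+\v{x}'$; the key point, which I would state explicitly, is that $|a|\in\Z$ for $a\in R\subset\hat A_\infty$, so it passes through the ceiling. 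Condition (3), $\v{x}'=-\infty\iff x=0$, is immediate from the corresponding property of $\v{~}$ together with the convention on $-\infty$. Finally, to see $(E,\v{~}')$ is a normed space I need $\dim_k E_{\le r}'<\infty$ for a full-rank lattice; but $E_{\le r}'=\{x\mid\lceil\v{x}\rceil\le r\}=\{x\mid\v{x}\le\lfloor r\rfloor\}=E_{\le\lfloor r\rfloor}$, which is finite-dimensional since $(E,\v{~})$ is a normed space. Taking any full-rank $R$-submodule $L\subset E$ that was a lattice for $\v{~}$, the same computation shows it remains a lattice for $\v{~}'$.

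The one place that genuinely requires a word of care — the ``main obstacle,'' such as it is — is the homogeneity axiom: one must observe that $|a|$ is an \emph{integer} for every $a\in R$ (indeed for every $a\in\hat A_\infty$, by the discussion of the degree function near Definition \ref{norm}), so that $\lceil |a|+\v{x}\rceil=|a|+\lceil\v{x}\rceil$ holds exactly, not merely up to $1$. Were $R$ to contain elements of non-integer ``degree,'' item (2) would fail, so this is the crux of why the statement is true. Everything else is a routine monotonicity-of-$\lceil\,\cdot\,\rceil$ argument together with the observation $E_{\le r}'=E_{\le\lfloor r\rfloor}$.
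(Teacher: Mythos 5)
Your proof is correct and is precisely the routine verification the paper intends (it states only that the lemma ``follows easily from the definitions'' and gives no proof). You correctly isolate the one point of substance — that $|a|\in\Z$ for $a\in R$, so the ceiling commutes with adding $|a|$ in the homogeneity axiom — and the identity $E'_{\le r}=E_{\le\lfloor r\rfloor}$ cleanly handles the lattice condition.
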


Thus, in each equivalence class there is a unique integer-valued norm, defined by $z\mapsto \lceil\v{z}\rceil$ for any $\vv$ in the class. In particular, there are as many equivalence classes of norms as integer-valued norms

\begin{definition}
A basis $\bb$ of $E$ is called a \emph{semi-orthonormal} basis of $(E,\v{~})$, if it is, up to ordering, an orthonormal basis of a normed space $(E,\v{~}')$, which is equivalent to $(E,\v{~})$.
\end{definition}
Note that a semi-orthonormal basis of $(E,\v{~})$ is a semi-orthonormal basis of $(E,\v{~}')$, for all norms $\v{~}'$ in the class of $\v{~}$. In particular, an orthonormal basis is semi-orthonormal. 

\begin{lemma}\label{onbasissemiprop}
A basis $\bb$ of $(E,\v{~})$ is semi-orthonormal if and only if
\begin{align}\label{semiotisred}
\Big\lceil\V{\sum_{b\in\bb}a_bb}\Big\rceil=\max_{b\in\bb}\{|a_b|\}, \text{ for all }a_b\in K.
\end{align}
\end{lemma}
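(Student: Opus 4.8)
The plan is to prove the equivalence by unwinding the definition of semi-orthonormal through the equivalence relation on norms. First I would observe that condition (\ref{semiotisred}) is intrinsic to the \emph{equivalence class} of $\v{~}$, not to $\v{~}$ itself: if $\v{~}\sim\v{~}'$, then $\lceil\v{z}\rceil=\lceil\v{z}'\rceil$ for all $z\in E$, and in particular $\big\lceil\V{\sum_{b\in\bb}a_bb}\big\rceil=\big\lceil\V{\sum_{b\in\bb}a_bb}'\big\rceil$, while the right-hand side $\max_b\{|a_b|\}$ does not involve the norm at all. So it suffices to check (\ref{semiotisred}) for any single representative of the class.

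For the forward direction, suppose $\bb$ is semi-orthonormal; by definition there is a norm $\v{~}'\sim\v{~}$ such that $\bb$, after reordering, is an orthonormal basis of $(E,\v{~}')$. Then $-1<\v{b_1}'\le\cdots\le\v{b_n}'\le 0$ and $\bb$ is in particular reduced for $\v{~}'$, so for all $a_b\in K$ we have $\V{\sum_{b\in\bb}a_bb}'=\max_b\{\v{a_bb}'\}=\max_b\{|a_b|+\v{b}'\}$. Since each $\v{b}'\in(-1,0]$, a short computation shows $\big\lceil\max_b\{|a_b|+\v{b}'\}\big\rceil=\max_b\{|a_b|\}$ (ceiling distributes over the max, and $\lceil m+r\rceil=m$ for $m\in\Z$, $r\in(-1,0]$). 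Combining with the class-invariance of the ceiling gives (\ref{semiotisred}).

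For the converse, assume (\ref{semiotisred}) holds. The natural move is to define a new length function on $E$ by declaring $\bb$ to be orthonormal with suitable lengths in $(-1,0]$: there is a subtlety in that (\ref{semiotisred}) only controls $\lceil\v{b}\rceil$ via the single-vector case $a_b=1$, $a_{b'}=0$ for $b'\ne b$, which yields $\lceil\v{b}\rceil=0$, i.e. $\v{b}\in(-1,0]$ for every $b\in\bb$. So in fact the lengths $r_b:=\v{b}$ themselves already lie in $(-1,0]$, and I would set $\v{~}'$ to be the norm for which $\bb$ is an orthonormal basis with $\v{b}'=\v{b}$ — concretely, transport the norm of $\bot_{b}\kk(r_b)$ along the coordinate isomorphism $c_\bb$, using Lemma \ref{cbisiso}. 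Then $\bb$ is orthonormal for $\v{~}'$ by construction, and it remains to check $\v{~}'\sim\v{~}$, i.e. $\lceil\v{z}\rceil=\lceil\v{z}'\rceil$ for all $z\in E$. Writing $z=\sum_b a_bb$, Lemma \ref{cbisiso} (or just the definition of $\v{~}'$) gives $\v{z}'=\max_b\{|a_b|+\v{b}\}$, whose ceiling equals $\max_b\{|a_b|\}$ by the computation above; and the ceiling of $\v{z}$ equals $\max_b\{|a_b|\}$ by hypothesis (\ref{semiotisred}). Hence $\lceil\v{z}\rceil=\lceil\v{z}'\rceil$, so $\v{~}\sim\v{~}'$ and $\bb$ is semi-orthonormal.

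The main obstacle — really the only point requiring care — is the converse: one must resist the temptation to build $\v{~}'$ with lengths $\lceil\v{b}\rceil$ (all zero) and instead keep the true values $\v{b}$, while verifying that the resulting $\v{~}'$ is genuinely a norm making $(E,\v{~}')$ a normed space (this is exactly what Lemma \ref{cbisiso} guarantees, since $\bot_b\kk(r_b)$ is a normed space) and that it lands in the same equivalence class. The ceiling-distributes-over-max computation for lengths in $(-1,0]$ is routine but is used in both directions and should be stated once cleanly.
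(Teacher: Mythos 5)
Your proof is correct and follows essentially the same strategy as the paper; the forward direction is identical (reduceness for the equivalent norm plus the observation that the ceiling of $\max_b\{|a_b|+r_b\}$ with $r_b\in(-1,0]$ is $\max_b\{|a_b|\}$). The only divergence is in the converse: you transport the norm of $\bot_b\kk(\v{b})$ along $c_\bb$, keeping the true lengths $\v{b}$, whereas the paper simply takes $\v{~}'=\lceil\v{~}\rceil$, the integer-valued norm in the class (a normed space by Lemma \ref{optimalbasis}), under which every $b\in\bb$ has length $0$ and (\ref{semiotisred}) literally says that $\bb$ is reduced. Both witnesses work, so your warning that one must ``resist the temptation'' to assign the lengths $\lceil\v{b}\rceil=0$ is unfounded --- that is precisely the paper's (valid and slightly shorter) construction, since equivalence $\lceil\v{~}\rceil\sim\v{~}$ is then automatic and needs no separate verification.
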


\begin{proof}
If $\bb$ is semi-orthonormal, there exists $\vv'\in\NM(E)$ with $\vv'\sim \vv$ such that $\bb$ is an orthonormal basis of $(E,\vv')$. Hence, 
$$
\V{\sum_{b\in\bb}a_bb}'=\max_{b\in\bb}\{\v{a_bb}'\}, \text{ for all }a_b\in K.
$$
As $-1<\v{b}'\leq 0$, for all $b\in\bb$, we obtain $\lceil \v{b}'\rceil=0$ and $\lceil \max_{b\in\bb}\{\v{a_bb}'\}\rceil=\max_{b\in\bb}\{|a_b|\}$. Since $\lceil\v{z}\rceil=\lceil\v{z}'\rceil$, for all $z\in E$, the statement holds.

Conversely, if $\vv$ satisfies (\ref{semiotisred}) then $\lceil \v{b}\rceil=0$ for all $b\in \bb$, and $\bb$ is an orthonormal basis of $(E,\vv')$, where $\vv'$ is the integer-valued norm defined by: $\v{z}'=\lceil\v{z}\rceil$.
\end{proof}

\begin{theorem}\label{sameval}
Let $\v{~}, \v{~}'\in \NM(E)$. It holds $\v{~}\sim \v{~}'$ if and only if the transition matrix from a semi-orthonormal basis of $(E,\v{~})$ to a semi-orthonormal basis of $(E,\v{~}')$ belongs to $\gl_n(A_\infty)$.
\end{theorem}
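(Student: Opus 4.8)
The plan is to reduce everything to the characterization of semi-orthonormal bases in Lemma \ref{onbasissemiprop}: a basis $\bb$ of $(E,\v{~})$ is semi-orthonormal if and only if $\lceil\v{\sum_{b\in\bb}a_bb}\rceil=\max_b\{|a_b|\}$ for all $a_b\in K$. Fix a semi-orthonormal basis $\bb=(b_1,\dots,b_n)$ of $(E,\v{~})$ and a semi-orthonormal basis $\bb'=(b'_1,\dots,b'_n)$ of $(E,\v{~}')$, and let $T=(t_{i,j})=T(\bb\to\bb')$, so that $b_i=\sum_jt_{i,j}b'_j$. Throughout I will use that $A_\infty=\{x\in K\mid|x|\le0\}$ and that $|~|$ is non-archimedean, so that a change of coordinates by a matrix with all entries in $A_\infty$ cannot increase the quantity $\max_i|c_i|$ attached to a coordinate vector $(c_1,\dots,c_n)$.

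First I would prove ``$\v{~}\sim\v{~}'\Rightarrow T\in\gl_n(A_\infty)$''. Since semi-orthonormality of a basis depends only on the equivalence class of the norm, $\bb$ is also a semi-orthonormal basis of $(E,\v{~}')$. Applying Lemma \ref{onbasissemiprop} to $\bb'$ with the coefficients $(t_{i,1},\dots,t_{i,n})$ yields $\lceil\v{b_i}'\rceil=\max_j|t_{i,j}|$, while applying the same lemma to $\bb$ with all coefficients zero except a $1$ in position $i$ gives $\lceil\v{b_i}'\rceil=0$. Hence $\max_j|t_{i,j}|=0$ for every $i$, that is, $T\in A_\infty^{n\times n}$. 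Interchanging the roles of $\bb$ and $\bb'$ — using $T^{-1}=T(\bb'\to\bb)$ and that both $\bb$ and $\bb'$ are semi-orthonormal for $(E,\v{~}')$ — shows likewise that $T^{-1}\in A_\infty^{n\times n}$. Therefore $T\in\gl_n(A_\infty)$.

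For the converse I would assume $T\in\gl_n(A_\infty)$ and take an arbitrary $z\in E$. Writing $z=\sum_ic_ib_i$ with $c_i\in K$, the coordinate vector of $z$ with respect to $\bb'$ is $(c'_1,\dots,c'_n)=(c_1,\dots,c_n)T$, so $c'_j=\sum_ic_it_{i,j}$. By Lemma \ref{onbasissemiprop}, $\lceil\v{z}\rceil=\max_i|c_i|$ and $\lceil\v{z}'\rceil=\max_j|c'_j|$. Since each $t_{i,j}\in A_\infty$, the non-archimedean inequality gives $|c'_j|\le\max_i(|c_i|+|t_{i,j}|)\le\max_i|c_i|$, hence $\lceil\v{z}'\rceil\le\lceil\v{z}\rceil$; the symmetric argument with $T^{-1}\in A_\infty^{n\times n}$ and $(c_1,\dots,c_n)=(c'_1,\dots,c'_n)T^{-1}$ yields the reverse inequality. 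Thus $\lceil\v{z}\rceil=\lceil\v{z}'\rceil$ for all $z\in E$, which is precisely $\v{~}\sim\v{~}'$.

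I expect the computations to be routine once Lemma \ref{onbasissemiprop} is in hand; the only place that deserves care is the bookkeeping in the forward direction, where one must invoke the class-invariance of semi-orthonormality so that $\bb$ and $\bb'$ may be treated simultaneously as semi-orthonormal bases of $(E,\v{~}')$ before the coefficient comparison. It is also worth recording that the two-sided estimate is exactly what is needed: membership in $\gl_n(A_\infty)$ amounts to both $T$ and $T^{-1}$ having entries in $A_\infty$, and each of these two inclusions supplies one of the two inequalities comparing $\lceil\v{z}\rceil$ with $\lceil\v{z}'\rceil$.
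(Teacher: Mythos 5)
Your proof is correct and follows the same strategy as the paper: both arguments reduce the statement to coordinates via Lemma \ref{onbasissemiprop}, so that $\v{~}\sim\v{~}'$ becomes precisely the condition that $T$ preserve $\max_i|a_i|$ on coordinate vectors. The only divergence is that the paper then cites Theorem \ref{autgl} to identify the matrices with this property as $\gl_n(A_\infty)$, whereas you re-derive that identification inline via the two-sided estimate using $T$ and $T^{-1}$ (together with the class-invariance of semi-orthonormality in the forward direction), which is a valid, self-contained substitute.
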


\begin{proof}
Denote by $\bb=(b_1,\dots,b_n)$ and by $\bb'=(b'_1,\dots,b'_n)$ semi-orthonormal bases of $(E,\v{~})$ and $(E,\v{~}')$, respectively. Let $T=(t_{i,j})$ be the transition matrix from $\bb$ to $\bb'$. For an arbitrary $z\in E$ we write $z=\sum_{i=1}^na_ib_i=\sum_{i=1}^na'_ib'_i$, with coefficients in $K$ such that $a'_i=\sum_{j=1}^nt_{j,i}a_j$. By (\ref{semiotisred}), the equality 
$$
\lceil\v{z}\rceil=\max_{1\leq i\leq n}\{|a_i|\}=\max_{1\leq i\leq n}\Big\{\Big|\sum_{j=1}^nt_{j,i}a_j\Big|\Big\}=\max_{1\leq i\leq n}\{|a'_i|\}=\lceil\v{z}'\rceil
$$
holds for all $z\in E$ if and only if $T\in\aut(\kk^n)$, and this group coincides with $\gl_n(A_\infty)$ by Theorem \ref{autgl}.
\end{proof}

\begin{lemma-definition}\label{smiredundred}
Let $\bb$ be a semi-orthonormal basis of $(E,\v{~})$. Then, we define $L_\infty:=\langle \bb\rangle_{A_\infty}=(E,\v{~})_{\leq 0}$. Moreover, any $A_\infty$-basis of $L_\infty$ is a semi-orthonormal basis of $(E,\v{~})$.
\end{lemma-definition}
\begin{proof}
By Lemma \ref{onbasissemiprop} it holds for $z=\sum_{i=1}^na_i b_i\in E$ with coefficients $a_i$ in $K$ that $\lceil \v{z}\rceil=\max_{1\leq i\leq n}\{|a_i|\}$. Clearly, $\v{z}\leq 0$ if and only if $|a_i|\leq 0$, for $1\leq i\leq n$; hence $L_\infty=(E,\vv)_{\leq 0}$. 

Since the transition matrix between two bases of $L_\infty$ belongs to $\gl_n(A_\infty)$, the second statement holds by Theorem \ref{sameval}. 
\end{proof}

\begin{definition}\label{semredd}
A subset $\{b_1,\dots,b_m\}$ in a normed space $(E,\v{~})$ is called \emph{semi-reduced} or \emph{weakly reduced} if 
$$
\Big\lceil\V{\sum_{i=1}^ma_ib_i}\Big\rceil=\max_{1\leq i\leq m}\{\lceil \v{a_ib_i}\rceil\},
$$
for any $a_1,\dots,a_m\in K$. Or equivalently, the subset is reduced with respect to the unique integer-valued norm equivalent to $\vv$.
\end{definition}

Clearly, any reduced set is semi-reduced. Many of the results concerning a reduced set can be adapted to semi-reduced sets. For instance, the next result follows immediately from the definitions.

\begin{lemma}\label{equivsemiot}\quad
\begin{enumerate}
\item A basis $\bb$ of a normed space $(E,\v{~})$ is semi-orthonormal if and only if $\bb$ is semi-reduced with $-1<\v{b}\leq 0$, for all $b\in \bb$. 
\item If $\bb=(b_1,\dots,b_n)$ is semi-reduced, then $(t^{-\lceil \v{b_1}\rceil}b_1,\dots,t^{-\lceil \v{b_n}\rceil}b_n)$ is semi-orthonormal.
\item If $\v{~}\sim\v{~}'$, then, any semi-reduced basis of $(L,\v{~})$ is a semi-reduced basis of $(L,\v{~}')$.
\end{enumerate}
\end{lemma}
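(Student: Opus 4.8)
The plan is to derive all three items from two facts already at hand: the characterization of semi-orthonormal bases in Lemma \ref{onbasissemiprop}, namely that $\bb$ is semi-orthonormal if and only if $\lceil\V{\sum_{b\in\bb}a_bb}\rceil=\max_b\{|a_b|\}$ for all $a_b\in K$; and the reformulation built into Definition \ref{semredd} of semi-reducedness of $\bb$ as reducedness with respect to the unique integer-valued norm in the equivalence class of $\vv$ (cf. the remark following Lemma \ref{optimalbasis}). The only computational ingredient is the elementary identity $\lceil\,|a|+r\,\rceil=|a|+\lceil r\rceil$, valid because $|a|\in\Z$ for $a\in K^*$; I would settle the degenerate cases (the zero vector, with the convention $\lceil-\infty\rceil=-\infty$, and families $(a_b)$ with some coefficients zero) at the outset, so that the three verifications can be carried out assuming everything in sight is nonzero.

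\emph{Item (1).} If $\bb$ is semi-orthonormal, then specializing the identity of Lemma \ref{onbasissemiprop} to a single nonzero coefficient gives $\lceil\v{b}\rceil=0$, i.e. $-1<\v{b}\le 0$ for all $b\in\bb$; consequently $\lceil\v{a_bb}\rceil=|a_b|$ by the identity above and using Definition \ref{norm}, so the condition of Lemma \ref{onbasissemiprop} rewrites exactly as the semi-reducedness condition $\lceil\V{\sum_{b\in\bb}a_bb}\rceil=\max_b\{\lceil\v{a_bb}\rceil\}$. Conversely, if $\bb$ is semi-reduced with $-1<\v{b}\le 0$ for all $b$, then again $\lceil\v{a_bb}\rceil=|a_b|$, so the semi-reducedness identity becomes the one in Lemma \ref{onbasissemiprop}, whence $\bb$ is semi-orthonormal.

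\emph{Item (2).} Put $b_i'=t^{-\lceil\v{b_i}\rceil}b_i$. By Definition \ref{norm}, $\v{b_i'}=\v{b_i}-\lceil\v{b_i}\rceil\in(-1,0]$, so by item (1) it suffices to check that $(b_1',\dots,b_n')$ is semi-reduced. For $a_1,\dots,a_n\in K$ write $\sum_i a_ib_i'=\sum_i a_i'b_i$ with $a_i'=a_it^{-\lceil\v{b_i}\rceil}$; applying semi-reducedness of $\bb$ to the $a_i'$ and using $\lceil\v{a_i'b_i}\rceil=\lceil\v{a_ib_i'}\rceil=|a_i|$ for $a_i\neq 0$, one obtains $\lceil\V{\sum_i a_ib_i'}\rceil=\max_i\{|a_i|\}=\max_i\{\lceil\v{a_ib_i'}\rceil\}$, which is the semi-reducedness of $(b_i')$.

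\emph{Item (3).} This is immediate: if $\vv\sim\vv'$ then $\lceil\v{z}\rceil=\lceil\v{z}'\rceil$ for all $z\in E$ by Definition \ref{equrel}, so $\vv$ and $\vv'$ determine the same integer-valued norm; hence, by the reformulation in Definition \ref{semredd}, a set is semi-reduced for $\vv$ if and only if it is semi-reduced for $\vv'$ (equivalently, substitute $\lceil\v{z}'\rceil$ for $\lceil\v{z}\rceil$ termwise in the defining identity). There is no genuinely hard step in any of this; the only point requiring care is keeping the ceiling bookkeeping consistent and treating the zero/degenerate-coefficient cases uniformly, which is precisely why I would fix those conventions before beginning.
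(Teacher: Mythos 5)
Your proof is correct and takes exactly the route the paper intends: the paper gives no proof of this lemma, asserting it "follows immediately from the definitions," and your argument is precisely the right filling-in of those details via Lemma \ref{onbasissemiprop}, the identity $\lceil |a|+r\rceil=|a|+\lceil r\rceil$ for $a\in K^*$, and the reformulation of semi-reducedness as reducedness for the integer-valued norm $\lceil\vv\rceil$.
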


The next theorem summarizes all data shared by all lattices in the equivalence class of  $(L,\vv)$.

\begin{theorem}\label{semisuccmin}
For $i\in \{1,2\}$, denote by $\bb_i=(b_{1,i},\dots,b_{n,i})$ a semi-reduced basis of the lattice $(L,\v{~}_i)$, which is ordered by increasing length. Then, the following statements are equivalent:
\begin{enumerate}
\item $\v{~}_1\sim \v{~}_2$,
\item $\lceil \v{b_{1,i}}_1\rceil=\lceil \v{b_{2,i}}_2\rceil$ for $1\leq i\leq n$,
\item $(L,\v{~}_1)_{\leq r}=(L,\v{~}_2)_{\leq r}$ for all $r\in \Z$, and\label{ie3semisucc}
\item $(E,\v{~}_1)_{\leq 0}=(E,\v{~}_2)_{\leq 0}$, with $E=\langle \bb_1\rangle_K=\langle \bb_2\rangle_K$.
\end{enumerate}
\end{theorem}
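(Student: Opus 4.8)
The plan is to prove the cycle of implications $(1)\Rightarrow(2)\Rightarrow(3)\Rightarrow(4)\Rightarrow(1)$. Two elementary facts will be used repeatedly: for $z\in E$ and $r\in\Z$ one has $\v{z}_i\le r$ if and only if $\lceil\v{z}_i\rceil\le r$, and $\lceil\v{az}_i\rceil=|a|+\lceil\v{z}_i\rceil$ for every $a\in K$ (because $\v{az}_i=|a|+\v{z}_i$ and $|a|\in\Z$). Moreover, by Lemma \ref{optimalbasis}, $\lceil\vv_i\rceil$ is the unique integer-valued norm equivalent to $\vv_i$, and by Definition \ref{semredd} a semi-reduced basis of $(L,\vv_i)$ is the same thing as a reduced basis of $(L,\lceil\vv_i\rceil)$; since $\lceil\cdot\rceil$ is monotone, the ordering of $\bb_i$ by increasing $\vv_i$-length is also an ordering by increasing $\lceil\vv_i\rceil$-length, so by Proposition \ref{basic}(2) the list $(\lceil\v{b_{1,i}}_i\rceil,\dots,\lceil\v{b_{n,i}}_i\rceil)$ is exactly the vector of successive minima of $(L,\lceil\vv_i\rceil)$.

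For $(1)\Rightarrow(2)$: if $\vv_1\sim\vv_2$ then $\lceil\vv_1\rceil=\lceil\vv_2\rceil$, so $(L,\lceil\vv_1\rceil)$ and $(L,\lceil\vv_2\rceil)$ are literally the same lattice and a fortiori have the same successive minima; by the remark above this is precisely $(2)$. For $(3)\Rightarrow(4)$: each $\bb_i$ is an $A$-basis of $L$, hence a $K$-basis of $E=L\otimes_AK$, so $E=\langle\bb_1\rangle_K=\langle\bb_2\rangle_K$; and given $z\in(E,\vv_1)_{\le 0}$, choose $a\in A\setminus\{0\}$ with $az\in L$ (possible since $L$ has full rank), so that $\v{az}_1=|a|+\v{z}_1\le|a|$ and thus $az\in(L,\vv_1)_{\le|a|}$; by $(3)$ we get $az\in(L,\vv_2)_{\le|a|}$, i.e.\ $\v{az}_2\le|a|$, whence $\v{z}_2\le 0$. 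This gives $(E,\vv_1)_{\le 0}\subseteq(E,\vv_2)_{\le 0}$, and equality follows by symmetry.

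For $(4)\Rightarrow(1)$: by Lemma \ref{equivsemiot}(2) the scaled family $\bb_i':=\bigl(t^{-\lceil\v{b_{1,i}}_i\rceil}b_{1,i},\dots,t^{-\lceil\v{b_{n,i}}_i\rceil}b_{n,i}\bigr)$ is a semi-orthonormal basis of $(E,\vv_i)$, and by Lemma-Definition \ref{smiredundred} it is an $A_\infty$-basis of $(E,\vv_i)_{\le 0}$. Hypothesis $(4)$ then says that $\bb_1'$ and $\bb_2'$ are two $A_\infty$-bases of one and the same $A_\infty$-module, so the transition matrix between them lies in $\gl_n(A_\infty)$; by Theorem \ref{sameval} this forces $\vv_1\sim\vv_2$.

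The remaining implication $(2)\Rightarrow(3)$ is the one I expect to carry the real weight. Applying Proposition \ref{basic}(3) to the reduced basis $\bb_i$ of $(L,\lceil\vv_i\rceil)$ gives, for every $r\in\Z$, an explicit $k$-basis
$$(L,\vv_i)_{\le r}=(L,\lceil\vv_i\rceil)_{\le r}=\bigl\langle\, b_{j,i}\,t^{\ell}\mid 1\le j\le n,\ 0\le \ell\le r-\lceil\v{b_{j,i}}_i\rceil\,\bigr\rangle_k,$$
so $(2)$ immediately yields $\dim_k(L,\vv_1)_{\le r}=\dim_k(L,\vv_2)_{\le r}$ for all $r\in\Z$. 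The obstacle is to promote this equality of dimensions to an equality of subspaces: the numerical data of $(2)$ only says, via Corollary \ref{classification}, that $(L,\lceil\vv_1\rceil)$ and $(L,\lceil\vv_2\rceil)$ are isometric, so one cannot avoid comparing the two semi-reduced bases directly. I would attempt an induction on $r$, starting from the smallest value for which either space is nonzero, showing that at each step the spanning sets displayed above generate the same subspace of $L$; it is exactly here that the semi-reduced (not merely numerical) nature of $\bb_1$ and $\bb_2$ must be exploited, and I anticipate this is the step requiring the most care.
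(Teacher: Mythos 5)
Your implications $(1)\Rightarrow(2)$, $(3)\Rightarrow(4)$ and $(4)\Rightarrow(1)$ are correct; the last one is exactly the paper's own argument for $(1)\Leftrightarrow(4)$ via Lemma-Definition \ref{smiredundred} and Theorem \ref{sameval}. But the gap you flag at $(2)\Rightarrow(3)$ is fatal to your cycle: that implication is not merely delicate, it is false, so no induction on $r$ can close it. Take $L=A^2\subset E=K^2$, let $\v{(x,y)}_1=\max\{|x|,|y|\}$ be the standard norm and $\v{(x,y)}_2=\max\{|x-ty|,|y|\}$ its pullback along the automorphism $(x,y)\mapsto(x-ty,y)$ of $K^2$, which maps $A^2$ onto $A^2$. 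Then $\big((1,0),(0,1)\big)$ and $\big((1,0),(t,1)\big)$ are reduced (hence semi-reduced) bases of $(L,\vv_1)$ and $(L,\vv_2)$ respectively, both with lengths $(0,0)$, so $(2)$ holds; yet $(t,1)$ lies in $(L,\vv_2)_{\leq 0}$ but not in $(L,\vv_1)_{\leq 0}=k^2$, so $(3)$ --- and likewise $(1)$ and $(4)$ --- fails. The purely numerical datum $(2)$ only pins down the isometry class of $(L,\lceil\vv\rceil)$ (Corollary \ref{classification}), exactly as you suspected, and that is strictly weaker than knowing the filtration $L_{\leq r}$.

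The paper avoids your problematic arrow by running the cycle the other way: it proves $(1)\Rightarrow(3)$ directly, using Lemma \ref{equivsemiot}(3) to observe that when $\vv_1\sim\vv_2$ the single basis $\bb_1$ is semi-reduced for \emph{both} norms, so the explicit spanning set of Proposition \ref{basic}(3) describes $(L,\vv_1)_{\leq r}$ and $(L,\vv_2)_{\leq r}$ simultaneously and the two subspaces coincide; then $(3)\Rightarrow(2)$ is your dimension count. The remaining arrow $(2)\Rightarrow(1)$ is dismissed in the paper as following ``immediately from the definitions'', and the example above shows that this is precisely the step that cannot be repaired. The honest conclusion is that $(1)$, $(3)$ and $(4)$ are equivalent and each implies $(2)$ --- all of which your correct implications, combined with the paper's $(1)\Rightarrow(3)$, establish --- but $(2)$ is strictly weaker than the rest; your instinct that the whole difficulty concentrates at $(2)\Rightarrow(3)$ was the right one.
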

\begin{proof}

$(1)\Rightarrow (3)$. One can easily see that item \textit{3} of Proposition \ref{basic} is correct for a semi-reduced basis and an integer $r$. Thus, 
$$
(L,\v{~}_1)_{\leq r}=\langle\{b_{1,i}t^{j_i}\mid 1\leq i\leq n, 0\leq j_i\leq -\lceil \v{b_{1,i}}_1\rceil+r\}\rangle_k.
$$
By Lemma \ref{equivsemiot}, the set $\bb_1$ is also a semi-reduced basis of $(L,\v{~}_2)$. Hence,
$$
(L,\v{~}_2)_{\leq r}=\langle\{b_{1,i}t^{j_i}\mid 1\leq i\leq n, 0\leq j_i\leq -\lceil \v{b_{1,i}}_2\rceil+r\}\rangle_k.
$$
Since $\lceil \v{z}_1\rceil=\lceil \v{z}_2\rceil$ holds for all $z\in E$, we get $(L,\v{~}_1)_{\leq r}=(L,\v{~}_2)_{\leq r}$.

$(3)\Rightarrow (2)$. Let $r_1\leq \cdots\leq r_n$; $s_1\leq \cdots\leq s_n$, with $r_i=\lceil \v{b_{1,i}}\rceil$, $s_i=\lceil \v{b_{2,i}}\rceil$. Assume that $r_1=s_1,\dots,r_i=s_i$, but $r_{i+1}<s_{i+1}$. Then, Proposition \ref{basic} shows that $\dim_k (L,\v{~}_1)_{\leq  s_{i+1}-1}\neq \dim_k (L,\v{~}_2)_{\leq  s_{i+1}-1}$, wich contradicts (3).

$(2)\Rightarrow (1)$. This implication follows immediately from the definitions.

Finally let us show that $(1)\Leftrightarrow (4)$. By Theorem \ref{sameval}, (1) is equivalent to the fact that the transition matrices between semi-orthonormal bases of the two normed spaces belong to $\gl_n(A_\infty)$. By Lemma-Definition  \ref{smiredundred}, this condition is equivalent to $(E,\v{~}_1)_{\le0}=(E,\v{~}_2)_{\le0}$.
\end{proof}

As we have seen in the proof of the last theorem it is sufficient to compute a semi-reduced basis of $(L,\vv)$ in order to determine a basis of $(L,\vv)_{\leq r}$, for $r\in \Z$.

\subsection{Computation of (semi-) reduced bases}\label{semisection}

Let $\bb'$ be an orthonormal basis of $(E,\v{~})$ and $L$ be a lattice in $(E,\vv)$. In section \ref{reductionalgorithm} we already described an algorithm (cf. Algorithm \ref{Algo1}), which computes a reduced basis of $L$. According to Lemma \ref{compl} the runtime of the computation of a reduced basis of $L$ is minimal if $E\cong \kk(r)^n$, i.e. $\#\sig(E)=1$.

The computation of a semi-reduced basis amounts to the computation of a reduced basis of a normed space in this favourable situation. In fact, by Lemmas \ref{optimalbasis} and \ref{equivsemiot}, a reduced basis of $(E,\lceil\vv\rceil)$ is a semi-reduced basis of $(E,\v{~})$ and since $(E,\lceil\vv\rceil)$ is an integer-valued normed space, it is isometric to $\kk^n$.

We may use this idea to describe an optimized version of Algorithm \ref{Algo1}. Clearly, $\bb'$ is an orthonormal basis of $(E,\lceil \vv\rceil)$ too; hence, we may consider $\bb$ as a basis of $(E,\lceil \vv\rceil)$ and call Algorithm \ref{Algo0} for $T=T(\bb\to\bb')$. This results in a semi-reduced basis $\bb_{\mathrm{semi}}$ of $(L,\vv)$. We will see that transforming $\bb_{\mathrm{semi}}$ into a reduced basis $\bb_\mathrm{red}$ of $(L,\vv)$ by Algorithm \ref{Algo1} can be realized at minimal cost. We summarize the results by the following pseudocode:


\newpage

\begin{algorithm}\caption{: Basis reduction}\label{Algo2}\begin{algorithmic}[1]

\REQUIRE $\bb'$ orthonormal basis of a normed space $(E,\v{~})$ and $\bb$ a basis of $E$. \ENSURE Reduced basis of the lattice $L=\langle \bb \rangle_A$.\\[0.25 cm]\STATE  $T_{\mathrm{semi}}\leftarrow$Algorithm \ref{Algo0}$(T(\bb\to\bb'))$\STATE $T_{\mathrm{red}}\leftarrow$Algorithm \ref{Algo1}$(T_{\mathrm{semi}})$\STATE \textbf{return} $T_{\mathrm{red}}$, transition matrix from a reduced basis of $L$ to $\bb'$

\end{algorithmic}\end{algorithm}

\begin{lemma}\label{shortodb}
Let $\bb$ be a semi-reduced basis of a lattice $(L,\v{~})$. Then, the orthogonal defect of $\bb$ satisfies $\od(\bb)<n$.
\end{lemma}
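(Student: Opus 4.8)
The plan is to reduce the claim to the already-established fact (Theorem \ref{reducecriteria}) that a \emph{reduced} basis has zero orthogonal defect, by passing to the unique integer-valued norm equivalent to $\v{~}$. First I would invoke Definition \ref{semredd}: since $\bb$ is semi-reduced in $(L,\v{~})$, it is a reduced basis of the lattice $(L,\lceil\v{~}\rceil)$. Applying Theorem \ref{reducecriteria} to this reduced basis gives $\od_{\lceil\v{~}\rceil}(\bb)=0$, i.e.
$$
\vol_{\lceil\v{~}\rceil}(\bb)=\vol(E,\lceil\v{~}\rceil)+|d(\bb)|,
$$
where the determinant $|d(\bb)|$ is the same for both norms, as it depends only on the $A$-module transition matrix between $\bb$ and any (semi-)orthonormal basis, and a semi-orthonormal basis for $\v{~}$ is orthonormal for $\lceil\v{~}\rceil$ (Lemma \ref{equivsemiot}, item 1, and the definition of semi-orthonormal).

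Next I would compare the two volumes term by term. For each $b\in\bb$ we have $\lceil\v{b}\rceil-1<\v{b}\le\lceil\v{b}\rceil$, hence
$$
\vol_{\lceil\v{~}\rceil}(\bb)-n<\vol(\bb)\le\vol_{\lceil\v{~}\rceil}(\bb).
$$
Similarly, fixing a semi-orthonormal basis $\bb'$ of $(E,\v{~})$ (which exists and satisfies $-1<\v{b'}\le 0$ for all $b'\in\bb'$, while $\lceil\v{b'}\rceil=0$), we get $\vol(E)=\vol(\bb')>-n$ and $\vol(E,\lceil\v{~}\rceil)=0$, so
$$
0\ge\vol(E)>-n=\vol(E,\lceil\v{~}\rceil)-n.
$$
Combining: $\od(\bb)=\vol(\bb)-\vol(E)-|d(\bb)|< \vol_{\lceil\v{~}\rceil}(\bb)-(\vol(E,\lceil\v{~}\rceil)-n)-|d(\bb)|=\od_{\lceil\v{~}\rceil}(\bb)+n=n$. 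Nonnegativity of $\od(\bb)$ is Hadamard's inequality, so $0\le\od(\bb)<n$.

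The only mild subtlety — and the step I would be most careful about — is confirming that $|d(\bb)|$ genuinely coincides for $(E,\v{~})$ and $(E,\lceil\v{~}\rceil)$. This is where I would spell out that a semi-orthonormal basis $\bb'$ of $(E,\v{~})$ is literally an orthonormal basis of $(E,\lceil\v{~}\rceil)$ (Definition of semi-orthonormal together with Lemma \ref{onbasissemiprop}), so the transition matrix $T(\bb\to\bb')$ used to compute $d(\bb)$ is the same matrix in both settings, and by Lemma \ref{transition}/the remark following the determinant definition the ideal $d(\bb)$ is independent of which orthonormal basis of $(E,\lceil\v{~}\rceil)$ one picks. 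Everything else is a routine rearrangement of the three inequalities above.
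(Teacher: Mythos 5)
Your proof is correct, but it takes a genuinely different route from the paper's. You transfer everything to the equivalent integer-valued norm $\lceil\v{~}\rceil$: since $\bb$ is reduced for $\lceil\v{~}\rceil$ (Definition \ref{semredd}), Theorem \ref{reducecriteria} gives a vanishing defect there, and you then compare volumes and determinants of the two norms term by term. The paper instead stays inside $(L,\v{~})$: it takes a genuinely reduced basis $\bb'$ of the \emph{same} lattice $L$, so that $d(\bb)=d(\bb')=d(L)$ and $\od(\bb')=0$ automatically, and invokes Theorem \ref{semisuccmin} (both bases being semi-reduced for the same norm) to get $\lceil\v{b_i}\rceil=\lceil\v{b'_i}\rceil$, hence $\v{b_i}<\v{b'_i}+1$ and $\od(\bb)=\vol(\bb)-\vol(\bb')<n$. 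The paper's comparison thereby avoids any discussion of how $\vol(E)$ and $d(\bb)$ behave under the change of norm, which is exactly the part your argument must (and essentially does) justify. Two small slips to repair in your write-up: (i) $\vol(E)$ is by definition the volume of an \emph{orthonormal} basis, and the identity $\vol(E)=\vol(\bb')$ for a merely semi-orthonormal $\bb'$ is false in general (a semi-orthonormal basis need not be reduced for $\v{~}$, so its volume can exceed $\vol(E)$); fortunately you only use $-n<\vol(E)\le 0$, which follows directly from an orthonormal basis. (ii) For the determinant, $d(\bb)$ in $(E,\v{~})$ must be computed against an orthonormal basis of $(E,\v{~})$, not an arbitrary semi-orthonormal one; the clean statement is that an orthonormal basis of $(E,\v{~})$ is in particular semi-orthonormal, hence orthonormal for $(E,\lceil\v{~}\rceil)$, so the \emph{same} transition matrix computes both determinants and $|d(\bb)|=|d_{\lceil\v{~}\rceil}(\bb)|$ indeed holds. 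With these adjustments your argument is complete; it is slightly longer than the paper's but has the merit of making explicit that the defect changes by less than $n$ when passing between equivalent norms.
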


\begin{proof}
Let $\bb=(b_1,\dots,b_n)$ and consider a reduced basis $\bb'=(b'_1,\dots,b'_n)$ of $L$. Assume that both bases are increasingly ordered with respect to the length of their vectors. By Theorem \ref{reducecriteria}, we obtain $\od(\bb')=0$, since $\bb'$ is reduced. 
Hence, $\vol(\bb')=\vol(E)+|d(L)|$.
According to Theorem \ref{semisuccmin} we obtain $\lceil\v{b_i}\rceil=\lceil\v{b'_i}\rceil$ and therefore $\v{b_i}<\v{b'_i}+1$, for $1\leq i\leq n$. Therefore, 
$$
\od(\bb)=\vol(\bb)-\vol(E)-|d(L)|=\vol(\bb)-\vol(\bb')<n.
$$
\end{proof}

According to Lemma \ref{loopnumber}, the last lemma shows that at most $O(\#\sig(E) n)$ reduction steps are necessary to transform a semi-reduced basis of $(L,\vv)$ into a reduced one. Having in mind that $\#\sig(E)\leq n$, Corollary \ref{copleximegaeasy} and the proof of Lemma \ref{compl} yield the following complexity estimation.

\begin{lemma}\label{reductionspeziell}
Let the notation be the same as in Lemma \ref{compl}. Then, Algorithm \ref{Algo2} takes $O(n^4h(T)+n^3h(T)^2)$ arithmetic operations in $k$ to transform $\bb$ into a reduced basis. 
In particular, for $h(T)\geq n$ the complexity is equal to $O(n^3h(T)^2)$.
\end{lemma}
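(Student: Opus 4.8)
The plan is to track the complexity of the two calls inside Algorithm \ref{Algo2} separately and add them. The first call is Algorithm \ref{Algo0} on $T=T(\bb\to\bb')$, viewed as a basis of the integer-valued normed space $(E,\lceil\vv\rceil)\cong\kk^n$; since $\#\sig(E,\lceil\vv\rceil)=1$, Corollary \ref{copleximegaeasy} applies directly and gives $O(n^4h(T)+n^3h(T)^2)$ operations in $k$, producing a basis $T_{\mathrm{semi}}$ whose rows are, by Lemma \ref{equivsemiot}, a semi-reduced basis of $(L,\vv)$. The second call is Algorithm \ref{Algo1} run on $T_{\mathrm{semi}}$ with respect to the true norm $\vv$; here the point is that the input is already semi-reduced, so by Lemma \ref{shortodb} its orthogonal defect satisfies $\od(T_{\mathrm{semi}})<n$. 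First I would invoke Lemma \ref{loopnumber} with this bound on the orthogonal defect to conclude that at most $\#\sig(E)\cdot\lfloor\od\rfloor+(\#\sig(E)-1)n=O(\#\sig(E)\cdot n)$ reduction steps are performed in the second call, rather than the $O(\#\sig(E)\cdot nh(T))$ steps one gets for a generic input.

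Next I would re-run the bookkeeping in the proof of Lemma \ref{compl} with this improved step count. The runtime of Algorithm \ref{Algo1} is dominated by (i) transforming matrices over $k$ into row echelon form — one such round costs $O(n^3)$ and is charged against each reduction step (and a few extra rounds where no step is found, but these are also $O(\#\sig(E)\cdot n)$ in number), and (ii) performing the reduction steps on the rows of $T_{\mathrm{semi}}$, each costing $O(n^2h(T_{\mathrm{semi}}))$ operations in $k$, where $h(T_{\mathrm{semi}})$ does not exceed a constant times $h(T)$ because no reduction step increases the height (as noted in the proof of Lemma \ref{compl}, and since $h(T_{\mathrm{semi}})\le h(T)$ after the first call by the same argument). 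Thus the second call costs
$$
O\big(\#\sig(E)\cdot n\cdot n^3\big)+O\big(\#\sig(E)\cdot n\cdot n^2 h(T)\big)=O\big(\#\sig(E)(n^4+n^3h(T))\big).
$$
Using $\#\sig(E)\le n$ (Proposition \ref{basic}, item \textit{1}, gives $\#\sig(E)\le n$), this is $O(n^5+n^4h(T))$, which is absorbed into $O(n^4h(T)+n^3h(T)^2)$ when $h(T)\ge n$, and in general into the stated bound after one checks $n^5\le n^3h(T)^2$ fails only for $h(T)<n$ — so I would instead keep the sharper accounting and note $\#\sig(E)\cdot n^4\le n^5$ together with $\#\sig(E)\cdot n^3h(T)\le n^4h(T)$, both dominated by $n^4h(T)+n^3h(T)^2$.

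Adding the two contributions gives $O(n^4h(T)+n^3h(T)^2)$ for Algorithm \ref{Algo2}, and for $h(T)\ge n$ the term $n^3h(T)^2$ dominates $n^4h(T)$, yielding $O(n^3h(T)^2)$. The main obstacle I anticipate is not any single estimate but making the height-stability claim airtight: one must argue carefully that $h$ of the working matrix never grows, neither through the first call (Algorithm \ref{Algo0}, where the multiplier matrix $R$ in \eqref{DefR} is a conjugate of a matrix in $\mathrm{LT}_n(k)$ by diagonal $t$-powers and the degree drop at each step keeps entries polynomial of non-increasing degree) nor through the second call; this is exactly the subtle point already flagged in the proof of Lemma \ref{compl}, and the whole saving hinges on it, so I would state it explicitly as the lemma's key input rather than leaving it implicit.
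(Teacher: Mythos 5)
Your proposal matches the paper's argument exactly: Corollary \ref{copleximegaeasy} for the first call, then Lemma \ref{shortodb} combined with Lemma \ref{loopnumber} to bound the second call by $O(\#\sig(E)\cdot n)$ reduction steps, costed exactly as in the proof of Lemma \ref{compl} (row-echelon rounds at $O(n^3)$ each, reduction steps at $O(n^2h(T))$ each, with the height of the working matrix non-increasing). The one caveat you flag yourself --- that the resulting $\#\sig(E)\cdot n^4\le n^5$ term for the row-echelon work is genuinely dominated by $n^4h(T)+n^3h(T)^2$ only when $h(T)\gtrsim n$ --- is an imprecision already present in the paper's own statement and proof, so your accounting is as tight as the original.
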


If we use Remark \ref{denominators}, we get an estimation of
$O(n^4h(T)(\#\sig(E)+h(T))$ operations in $k$, if we do not assume that the input matrix has polynomial entries.

\section{Lattices in algebraic function fields}\label{RRChapter}
Let $F/k$ be an \emph{algebraic function field} of one variable over the constant field $k$ and let  $k_0$ be the \emph{full constant field}. That is, $F/K$ is a separable extension of finite degree $n$ and $k_0$ is the algebraic closure of $k$ in $F$.

We may realize an algebraic function field $F/k$ as the quotient field of the residue class ring $A[x]/(f(t,x))$, where 
$$f(t,x)=x^n+a_1(t)x^{n-1}+\dots+a_n(t)\in A[x]$$ 
is irreducible, monic and separable in $x$. Such a representation exists for every algebraic function field over a perfect constant field \cite[p. 128]{H.Stich}. We consider $\theta\in F$ with $f(t,\theta)=0$, so that $F=k(t,\theta)$. We call $A[\theta]$ the \emph{finite equation order} of $f$, and we define 
$$
C_f=\max\{\lceil\deg a_i(t)/i\rceil \mid 1\leq i\leq n\},\quad f_\infty(t^{-1},x)=t^{-nC_f}f(t,t^\cf x).
$$ 
Then, $f_\infty$ belongs to $k[t^{-1},x]\subset A_\infty[x]$ and the quotient field of the residue class ring $A_\infty[x]/(f_\infty(t^{-1},x))$ becomes another realization of the function field $F/k$. Clearly, $\theta_\infty:=\theta/t^\cf$ is a root of $f_\infty$. As $\theta_\infty$ is integral over $A_\infty$, we may consider the \emph{infinite equation order} $A_\infty[\theta_\infty]$.

A \emph{place} $P$ of $F/k$ is the maximal ideal of the valuation ring of 
 a surjective valuation $v_P:F\rightarrow \Z\cup\{\infty\}$, which vanishes on $k$.
Denote by $\mathbb{P}_F$ the set of all places of $F/k$ and let $\mathbb{P}_\infty(F)\subset\mathbb{P}_F$ be the set of all places over $P_\infty$. We denote $\mathbb{P}_0(F)=\mathbb{P}_F\setminus \mathbb{P}_\infty(F)$ the set of ``finite" places. 

A \emph{divisor} $D$ of $F/k$ is a formal finite $\Z$-linear combination of the places of $F$. The set $\dd_F$ of all divisors of $F/k$ is an abelian group. For a divisor $D=\sum_{P\in\pp_F}a_P P$, we set $v_P(D)=a_P$.
A partial ordering on $\dd_F$ is defined by: $D_1\leq D_2 $ if and only if $v_P(D_1)\leq v_P(D_2)$ for all $P\in \pp_F$. 

Every $z\in F^*$ determines a \emph{principal divisor} $(z)=\sum_{P\in\pp_F}v_P(z) P$. 

The \emph{Riemann-Roch space} of a divisor $D$ is the finite dimensional $k$-vector space
$$\ll(D)=\{a\in F^*\mid (a)\geq -D\}\cup \{0\}.$$ 
Instead of $\dim_k \ll(D)$, we write $\dim_k D$. 

In this section we will see that any divisor $D$ in $\dd_F$ induces a norm $\vv_D$ and a normed space $(F,\vv_D)$. Hence, the results for lattices become available in the context of algebraic function fields. 

The theory of lattices in function fields can be used to compute a $k$-basis of the Riemann-Roch space of a divisor $D$ and the successive minima of its induced lattice.
In \cite{Schoe} an algorithm is presented, which covers this problem in the context of a tamely ramified global function field. To this purpose, Puiseux expansions of certain function field elements must be computed. This leads to the technical problem of choosing the right precision of the expansions. Our algorithm for the computation of the successive minima of $D$ can be applied for arbitrary function fields and no series expansions are used.

Let $\oo_F=\mathrm{Cl}(A,F)$ and $\oo_{F,\infty}=\mathrm{Cl}(A_\infty,F)$ be the integral closures of $A$ and $A_\infty$ in $F$, respectively. These rings $\oo_F$ and $\oo_{F,\infty}$ are Dedekind domains. Hence, any nonzero fractional ideal of $\oo_F$ or $\oo_{F,\infty}$ has an unique decomposition into a product of nonzero prime ideals. 
The nonzero prime ideals of $\oo_F$ (respectively $\oo_{F,\infty}$) are in 1-1 correspondence with the finite (respectively infinite) places of $F/k$. Hence, a divisor $D$ admits a unique representation as a pair $(I,I_\infty)$ of fractional ideals $I$ of $\oo_F$ and $I_\infty$ of $\oo_{F,\infty}$. In particular, $I$ and $ I_\infty$ are $A$- and $A_\infty$-modules of full rank $n$, respectively.

More precisely, for a given divisor $D$, we consider a divisor
$$
D+r(t)_\infty=\sum_{Q\in\mathbb{P}_0(F)}\alpha_Q\cdot Q+\sum_{P\in\mathbb{P}_\infty(F)}(\beta_P+r\,e(P/P_\infty))\cdot P,$$
where $\alpha_Q,\beta_P, r\in\Z$ and $e(P/P_\infty)$ is the ramification index of $P$ over $P_\infty$. The ideal representation of $D+r(t)_\infty$ is given by $( I, t^{r}I_\infty)$, where $ I=\prod_{Q\in\mathbb{P}_0}\q^{-\alpha_Q}$ and $ I_\infty=\prod_{P\in\mathbb{P}_\infty}\p^{-\beta_P}$ constitute the ideal representation of $D$. The prime ideals $\q$ and $\p$ of $F$ are determined by the places $Q,P$ of $F$ through the identities $v_\q=v_Q$ and $v_\p=v_P$, respectively.

We consider on $F$ the norm:
\begin{align}\label{DivNorm}
\vv_D:F\rightarrow \{-\infty\}\cup\Q,\quad \v{z}_D=-\min_{P\in\mathbb{P}_\infty(F)}\left\{\frac{v_P(z)+v_P(D)}{e(P/P_\infty)}\right\}.
\end{align}
Clearly, any divisor $D$ induces a norm $\v{~}_D$. As our considerations are relative to a fixed divisor $D$, we write $\v{~}$ instead of $\v{~}_D$. 

\begin{theorem}\label{schoe}\label{dim}\quad
\begin{enumerate}
\item $\ll(D+r(t)_\infty)=I\cap t^{r}I_\infty=( I,\vv)_{\leq r}$.
\item $(I,\vv)$ is a lattice and $(F,\vv)$ is a normed space.
\end{enumerate}
\end{theorem}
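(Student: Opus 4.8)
The plan is to translate both assertions into statements about the valuations $v_P$ at the places of $F$, using the correspondence between fractional ideals of the Dedekind domains $\oo_F$, $\oo_{F,\infty}$ and their prime valuations.

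For item (1), fix $r$ and a nonzero $a\in F$. The condition $(a)\ge -(D+r(t)_\infty)$ means $v_P(a)\ge -v_P(D+r(t)_\infty)$ for every place $P$, which splits into the finite conditions $v_\q(a)\ge -\alpha_Q=v_\q(I)$ for all $Q\in\mathbb{P}_0(F)$ and the infinite conditions $v_\p(a)\ge -\beta_P-r\,e(P/P_\infty)$ for all $P\in\mathbb{P}_\infty(F)$. Since $t^{-1}$ is a uniformizer of $A_\infty$ at $P_\infty$ and $\p$ lies over $P_\infty$ with ramification index $e(P/P_\infty)$, one has $v_\p(t)=-e(P/P_\infty)$, hence $v_\p(t^rI_\infty)=-r\,e(P/P_\infty)-\beta_P$; so the infinite conditions read $v_\p(a)\ge v_\p(t^rI_\infty)$ for all infinite $\p$. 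Because membership of a field element in a nonzero fractional ideal of a Dedekind domain is detected by all its prime valuations, the finite conditions are equivalent to $a\in I$ and the infinite ones to $a\in t^rI_\infty$; as $0$ lies in each of the three sets, this gives $\ll(D+r(t)_\infty)=I\cap t^rI_\infty$. The second equality is Definition (\ref{DivNorm}) unwound: for $z\in F$, $\v{z}\le r$ holds iff $(v_P(z)+v_P(D))/e(P/P_\infty)\ge -r$ for every infinite $P$, i.e.\ iff $v_\p(z)\ge -\beta_P-r\,e(P/P_\infty)=v_\p(t^rI_\infty)$ for every infinite $\p$, i.e.\ iff $z\in t^rI_\infty$. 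Hence $(I,\vv)_{\le r}=\{z\in I\mid\v{z}\le r\}=I\cap t^rI_\infty$.

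For item (2), I would first check that $\vv$ is a norm in the sense of Definition \ref{norm}, and in fact a norm on the whole $K$-vector space $F$. The ultrametric inequality follows by applying $v_P(x+y)\ge\min\{v_P(x),v_P(y)\}$ at each infinite $P$ and taking the maximum over $P$; the equality $v_P(a)=e(P/P_\infty)v_\infty(a)=-e(P/P_\infty)|a|$ for $a\in A$ (indeed for $a\in K$) gives $\v{az}=|a|+\v{z}$; and nondegeneracy is clear since $v_P(z)$ is finite precisely when $z\ne 0$. Moreover $\oo_F$ is a finitely generated $A$-module, as $A$ is Noetherian and integrally closed and $F/K$ is finite separable, so the fractional ideal $I$ is a finitely generated $A$-module, of full rank $n$ because it contains $a\oo_F$ for some nonzero $a\in A$.

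It then remains to verify $\dim_k(I,\vv)_{\le r}<\infty$ for all $r\in\R$. Since the values of $\vv$ lie in $\frac1m\Z$ for $m$ the least common multiple of the ramification indices $e(P/P_\infty)$, $P\in\mathbb{P}_\infty(F)$, we have $(I,\vv)_{\le r}\subseteq(I,\vv)_{\le m_0}$ for any integer $m_0\ge r$; by item (1) this equals $\ll(D+m_0(t)_\infty)$, which is finite-dimensional over $k$ by the classical finiteness of Riemann--Roch spaces \cite{H.Stich}. Thus $(I,\vv)$ is a lattice over $A$, and taking $L=I$ as a full-rank sublattice exhibits $(F,\vv)$ as a normed space over $K$. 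I expect no real obstacle beyond keeping the sign conventions consistent among the divisor $(t)_\infty$, the prime valuations on $\oo_{F,\infty}$, and the minus sign in Definition (\ref{DivNorm}); the only external ingredient is the finiteness of $\dim_k\ll(D)$.
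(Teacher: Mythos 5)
Your proposal is correct and follows essentially the same route as the paper: item (1) is proved by unwinding the conditions $(z)\ge-(D+r(t)_\infty)$ and $\v{z}\le r$ place by place into the ideal-membership conditions, and item (2) reduces the finiteness of $\dim_k(I,\vv)_{\le r}$ to item (1) together with the classical finite-dimensionality of Riemann--Roch spaces. The only difference is that you also verify explicitly that $\vv$ is a norm and that $I$ is a finitely generated full-rank $A$-module, details the paper asserts in the surrounding text rather than inside the proof.
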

\begin{proof}
We consider the first identity of item \textit{1}. For $z\in \ll(D+r(t)_\infty)$, we obtain $(z)\geq -(D+r(t)_\infty)$ and equivalently
\begin{align*}
v_Q(z)\geq -\alpha_Q, \forall Q\in\pp_0(F),\qquad v_P(z)\geq -\beta_P-r\,e(P/P_\infty), \forall P\in\pp_\infty(F).
\end{align*}
Clearly, this is equivalent to $z\in I\cap t^{r}I_\infty$. 

In order to proof the second identity of the first item we consider $z\in ( I,\vv)_{\leq r}$. That is, $z\in I$ with $\v{z}\leq r$, which is equivalent to
$$
\min_{P\in\pp_\infty(F)}\Big\{\frac{v_P(z)+v_P(D)}{e(P/P_\infty)}\Big\}\geq -r\ \Longleftrightarrow v_P(z)+\beta_P\geq -re(P/P_\infty),\forall P\in\pp_\infty(F).
$$
This is equivalent to $z\in I\cap t^{r}I_\infty$, since $z\in I$.

We consider the second item. Regarding Definition \ref{deflat}, we have to show that $\dim_k (I,\vv)_{\leq r}<\infty$, for all $r\in \R$. This follows directly from item \textit{1}. 
\end{proof}

By the last theorem we can identify any divisor $D$ uniquely with the lattice $(I,\v{~})$. Hence, we can define the successive minima $\sm(D)$ of $D$ to be the successive minima of the corresponding lattice. We call two divisors $D_1$ and $D_2$ \emph{isometric} if they have the same successive minima, and we write then $D_1\sim D_2$. Clearly $\sim$ is an equivalence relation on the set of divisors. The class of $D$ in $\dd_F/\!\sim$ is called the \emph{isometry class} of $D$.

\begin{corollary}\label{basisRRSpace}
Let $\bb=(b_1,\dots,b_n)$ be a semi-reduced basis of $( I,\vv)$. Then,
\begin{enumerate}
\item $ I_\infty=(F,\vv)_{\leq 0}$,
\item the set $\{b_it^{j_i}\mid 1\leq i\leq n,\  0\leq j_i\leq- \lceil \v{b_i}\rceil +r\}$ is a k-basis of $\ll(D+r(t)_\infty)$,
\item $\dim_k (D+r(t)_\infty)=\sum_{\lceil \v{b_i}\rceil\leq r}(-\lceil \v{b_i}\rceil+r +1)$.
\end{enumerate}
\end{corollary}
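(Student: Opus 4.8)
The plan is to leverage Theorem \ref{schoe}, which identifies $\ll(D+r(t)_\infty)$ with $(I,\vv)_{\leq r}$, together with the ``semi-reduced'' analogue of item (\textit{3}) of Proposition \ref{basic}, which was already observed to hold in the proof of Theorem \ref{semisuccmin}. First I would prove item (\textit{1}): by Lemma \ref{equivsemiot}(\textit{2}) the family $(t^{-\lceil\v{b_1}\rceil}b_1,\dots,t^{-\lceil\v{b_n}\rceil}b_n)$ is a semi-orthonormal basis of $(F,\vv)$, so by Lemma-Definition \ref{smiredundred} the $A_\infty$-module it generates equals $(F,\vv)_{\leq 0}$. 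It remains to identify this module with $I_\infty$. For this I would use the ideal representation $(I,I_\infty)$ of $D$: by Theorem \ref{schoe}(\textit{1}) (applied in the limit, or directly by unwinding the definition of $\vv$) an element $z\in F$ lies in $I_\infty$ iff $v_P(z)+v_P(D)\geq 0$ for all $P\in\pp_\infty(F)$, which by \eqref{DivNorm} is exactly $\v{z}\leq 0$; hence $I_\infty=(F,\vv)_{\leq 0}$. Combining, $I_\infty=\langle t^{-\lceil\v{b_1}\rceil}b_1,\dots,t^{-\lceil\v{b_n}\rceil}b_n\rangle_{A_\infty}$, and in particular the $b_i$ themselves form a semi-orthonormal-type generating set scaled by the obvious powers of $t$.

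For item (\textit{2}), by Theorem \ref{schoe}(\textit{1}) we must show that $\{b_it^{j_i}\mid 1\le i\le n,\ 0\le j_i\le -\lceil\v{b_i}\rceil+r\}$ is a $k$-basis of $(I,\vv)_{\leq r}$. Since $\bb$ is semi-reduced, for $x=\sum_{i=1}^n a_ib_i\in I$ (with $a_i\in A$, as $\bb$ is an $A$-basis of $I$) we have $\lceil\v{x}\rceil=\max_i\{\lceil\v{a_ib_i}\rceil\}=\max_i\{|a_i|+\lceil\v{b_i}\rceil\}$ by Definition \ref{semredd}. Thus $\v{x}\leq r$, i.e. $\lceil\v{x}\rceil\leq r$ (as $r\in\Z$), holds iff $|a_i|\leq r-\lceil\v{b_i}\rceil$ for every $i$; when $r-\lceil\v{b_i}\rceil<0$ this forces $a_i=0$. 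The polynomials $a\in A$ with $|a|\le r-\lceil\v{b_i}\rceil$ form the $k$-span of $1,t,\dots,t^{\lfloor r-\lceil\v{b_i}\rceil\rfloor}=t^{r-\lceil\v{b_i}\rceil}$ (the floor is superfluous since the bound is an integer), exactly as in the proof of Proposition \ref{basic}. This gives the claimed spanning set, and $k$-linear independence is immediate from the $A$-linear independence of $\bb$.

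Item (\textit{3}) is then a bookkeeping consequence of (\textit{2}): the basis splits as a disjoint union over $i$ of blocks of size $\lfloor r-\lceil\v{b_i}\rceil\rfloor+1=-\lceil\v{b_i}\rceil+r+1$ whenever $-\lceil\v{b_i}\rceil+r\geq 0$, i.e. $\lceil\v{b_i}\rceil\leq r$, and the $i$-th block is empty otherwise. Summing the block sizes gives $\dim_k(D+r(t)_\infty)=\sum_{\lceil\v{b_i}\rceil\leq r}(-\lceil\v{b_i}\rceil+r+1)$.

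I do not expect a serious obstacle here; the only point needing a little care is the identification $I_\infty=(F,\vv)_{\leq 0}$ in item (\textit{1}), where one must match the ideal-theoretic description of $I_\infty$ (as $\prod_{P\in\pp_\infty}\p^{-\beta_P}$, so that $z\in I_\infty\iff v_P(z)\geq-\beta_P=v_P(D)$ for all infinite $P$) against the definition \eqref{DivNorm} of the norm; this is precisely the second-identity computation already carried out in the proof of Theorem \ref{schoe}, specialized to $r=0$, and everything else is a direct transcription of the reduced-basis arguments of Proposition \ref{basic} to the semi-reduced setting, legitimized by Definition \ref{semredd} and Lemma \ref{equivsemiot}.
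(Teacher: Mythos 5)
Your proposal is correct and takes essentially the same route as the paper: item (\textit{1}) via Lemma \ref{equivsemiot} and Lemma-Definition \ref{smiredundred}, and items (\textit{2})--(\textit{3}) via Theorem \ref{schoe} together with the semi-reduced analogue of item (\textit{3}) of Proposition \ref{basic}. The only difference is that you explicitly unwind the identification $I_\infty=(F,\vv)_{\leq 0}$ from the definition of $\vv_D$ and redo the Proposition \ref{basic} computation in the semi-reduced setting, where the paper simply cites Lemma-Definition \ref{smiredundred}, Theorem \ref{semisuccmin} and Proposition \ref{basic}; this makes your write-up slightly more self-contained but not different in substance.
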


\begin{proof}
By Theorem \ref{schoe}, $(I,\vv)$ is a lattice. Let  $m_i=-\lceil\v{b_i}\rceil$, for $1\leq i\leq n$. By Lemma \ref{equivsemiot}, the family $(t^{m_1}b_1,\dots,t^{m_n}b_n)$ is a semi-orthonormal basis of $(F,\vv)$. Hence, Lemma-Definition \ref{smiredundred} yields the first item of the theorem.

Since $\bb$ is a reduced basis of $( I,\lceil \vv\rceil)\sim ( I,\vv)$, the second item follows from Theorem \ref{semisuccmin} and Proposition \ref{basic}. The third one follows from the second one.
\end{proof}

The successive minima $\sm(D)$ determine the isometry class of a divisor $D$. For the computation of $\sm(D)$ we need a reduced basis $\bb$ of the corresponding lattice $(I,\v{~})$. According to Subsection \ref{semisection} an orthonormal basis $\bb'$ of the normed space $F$ is required. If $\mathrm{supp}(D)\cap \pp_\infty(F)=\emptyset$, algorithms  which determine a reduced basis of $(F,\v{~})$ can be found in \cite{integralPaper}, \cite{NartBase} and \cite{Stain}. These ideas can be easily generalized to arbitrary divisors $D$, for instance see \cite{phd}. We assume that a basis $\bb$ of $I$ and an orthonormal basis $\bb'$ of $(F,\v{~})$ are already available. Then, Algorithm \ref{Algo2} transforms $\bb$ into a reduced basis of $(I,\v{~})$. Since every reduced basis is in particular semi-reduced by Corollary \ref{basisRRSpace}, Algorithm \ref{Algo2} determines a basis of the Riemann-Roch space $\ll(D)= I \cap  I_\infty$ too.

In \cite[Theorem 3.2]{integralPaper} it is shown that the semi-reduced bases of $(F,\v{~})$ are characterized by the $A_\infty$-bases of the fractional ideal $I_\infty$. For a basis $\bb'$ of $I_\infty=(F,\v{~})_{\leq 0}$, which is not reduced, Algorithm \ref{Algo2} does compute a $k$-basis of $\ll(D)$ but not the successive minima of $D$. If we call in that context the simplified reduction Algorithm \ref{Algo0} for the transition matrix $T(\bb\rightarrow\bb')$ then we are in the case of Hess' algorithm described in \cite{F.H.}. Hence, Algorithm \ref{Algo2} can be considered as a refinement of Hess' algorithm in that setting.

\section{Appendix: Complexity of the computation of the successive minima}\label{complexirrspaci}

\subsection{Bases of fractional ideals}\label{ideals}
 Let $R$ be either $A$ or $A_\infty$. We denote $\oo_R=\oo_F$, $\t_R=\t$, if $R=A$, and $\oo_R=\oo_{F,\infty}$, $\t_R=\t_\infty$, if $R=A_\infty$.
We consider $\bb_{\t_R}=(1,\t_R,\dots, \t_R^{n-1})$, which is a basis of $R[\t_R]$. 

  Let $M$ and $M'$ be two free $R$-modules of rank $n$. The \emph{index} $[M:M']$ is the nonzero fractional ideal of $R$ generated by the determinant of the transition matrix from a basis of $M'$ to a basis of $M$.

In the sequel we consider canonical bases of fractional ideals in function fields. These canonical bases consist of elements having ``small" size, which is comfortable from the computational point of view. Moreover, we determine concrete bounds for the entries of the transition matrix from such a canonical basis to $\bb_{\t_R}$. 

Let $I=\prod_{\p\in\ma(\oo_R)}\p^{a_\p}$ be a nonzero fractional ideal of $\oo_R$. We define
\begin{align}\label{istern}
 I^*:=\prod_{\p\in\ma(\oo_R)}\p^{-|a_\p|}.
\end{align}
Clearly, $I^*$ is again a fractional ideal of $\oo_R$.

For $h\in K$ we set $|hR|:=|h|$ and extend the degree function $|~|$ to fractional ideals of $R$.
\begin{definition}\label{deffiofheight}
The \emph{height} of the fractional ideal $I$ of $\o1$ or $I_\infty$ of $\oi$ is defined to be the integer
$$
h(I)=|[ I^*:A[\theta]]|\quad \text{or}\quad h( I_\infty)=-|[ I^*_\infty:A_\infty[\theta_\infty]]|.
$$
Additionally, we define the \emph{absolute height} of $I$ or $I_\infty$ by
$$
H(I)=|[ I^*:\o1]|+|\dsc f|\quad \text{or}\quad H( I_\infty)=-|[ I^*_\infty:\oi]|-|\dsc f_\infty|.
$$
\end{definition}
\begin{lemma}\label{heightlamma}
Let $I$ and $I_\infty$ be as in the last definition. Then, it holds
\begin{enumerate}
\item $h(I),\ h(I_\infty),\ H(I),\ H(I_\infty)\geq 0$,
\item $h(I)\leq |[I^*:\o1]|+\frac{1}{2}|\dsc f|\leq H(I)$, 
\item $h(I_\infty)\leq -|[I^*_\infty:\oi]|-\frac{1}{2}|\dsc f_\infty|\leq H(I_\infty)$.
\end{enumerate}
\end{lemma}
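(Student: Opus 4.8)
The plan is to express both $h$ and $H$ through a single chain of module indices together with the conductor--discriminant relation, and then to deduce all three items purely by tracking degrees. The only point requiring care is that the sign of the degree function $|~|$ on a nonzero integral ideal is positive over $A$ but negative over $A_\infty$; this is precisely why the definitions of $h(I_\infty)$ and $H(I_\infty)$ carry a leading minus sign, and once it is handled, the two cases are mirror images.

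First I would record two structural facts. Since every prime exponent in \eqref{istern} equals $-|a_\p|\le 0$, the fractional ideal $I_R^*$ \emph{contains} $\oo_R$; together with $R[\t_R]\subseteq\oo_R$ this produces a chain of full-rank $R$-submodules $R[\t_R]\subseteq\oo_R\subseteq I_R^*$. Multiplicativity of the index along this chain gives $[I_R^*:R[\t_R]]=[I_R^*:\oo_R]\cdot[\oo_R:R[\t_R]]$, and all three indices are \emph{integral} ideals of $R$; equivalently, by Smith normal form over the PID $R$, a nested pair of full-rank modules $M'\subseteq M$ satisfies $|[M:M']|=\dim_k(M/M')\ge 0$ when $R=A$ and $|[M:M']|=-\dim_k(M/M')\le 0$ when $R=A_\infty$, and these dimensions are additive along the chain. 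The second fact is the conductor--discriminant formula for the equation order, $\dsc(R[\t_R])=[\oo_R:R[\t_R]]^2\,\dsc(\oo_R)$, where $\dsc(R[\t_R])$ is $(\dsc f)$ or $(\dsc f_\infty)$ and $\dsc(\oo_R)$ is the relative discriminant of the maximal order; the latter is an integral ideal of $R$ because it is generated by $\det\big(\Tr(u_iu_j)\big)$ for an $R$-basis $(u_i)$ of $\oo_R$, whose entries all lie in $R$. Taking $|~|$ yields $|\dsc f|=2\,|[\oo_F:A[\theta]]|+|\dsc(\oo_F)|$ over $A$, and $|\dsc f_\infty|=2\,|[\oo_{F,\infty}:A_\infty[\theta_\infty]]|+|\dsc(\oo_{F,\infty})|$ over $A_\infty$.

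For $R=A$ every nonzero integral ideal has degree $\ge0$, so $|[I^*:\oo_F]|$, $|[\oo_F:A[\theta]]|$, $|\dsc f|$, $|\dsc(\oo_F)|\ge0$. Hence $h(I)=|[I^*:\oo_F]|+|[\oo_F:A[\theta]]|\ge0$ and $H(I)=|[I^*:\oo_F]|+|\dsc f|\ge0$, which is item \textit{1}. Substituting $|[\oo_F:A[\theta]]|=\tfrac12\big(|\dsc f|-|\dsc(\oo_F)|\big)$ gives $h(I)=|[I^*:\oo_F]|+\tfrac12|\dsc f|-\tfrac12|\dsc(\oo_F)|$, so $h(I)\le|[I^*:\oo_F]|+\tfrac12|\dsc f|$ because $|\dsc(\oo_F)|\ge0$, and $|[I^*:\oo_F]|+\tfrac12|\dsc f|\le|[I^*:\oo_F]|+|\dsc f|=H(I)$ because $|\dsc f|\ge0$; this is item \textit{2}.

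For $R=A_\infty$ the DVR $A_\infty$ has uniformizer $t^{-1}$ and $|~|=-v_\infty$, so a nonzero integral ideal has degree $\le0$; thus $-|[I_\infty^*:\oo_{F,\infty}]|$, $-|[\oo_{F,\infty}:A_\infty[\theta_\infty]]|$, $-|\dsc f_\infty|$, $-|\dsc(\oo_{F,\infty})|\ge0$. Running the same computation with signs reversed, using $-|[\oo_{F,\infty}:A_\infty[\theta_\infty]]|=\tfrac12\big(|\dsc(\oo_{F,\infty})|-|\dsc f_\infty|\big)$, gives $h(I_\infty)\ge0$ and $H(I_\infty)\ge0$ (item \textit{1}), and
$$h(I_\infty)=-|[I_\infty^*:\oo_{F,\infty}]|-\tfrac12|\dsc f_\infty|+\tfrac12|\dsc(\oo_{F,\infty})|\;\le\;-|[I_\infty^*:\oo_{F,\infty}]|-\tfrac12|\dsc f_\infty|\;\le\;H(I_\infty),$$
the first inequality because $|\dsc(\oo_{F,\infty})|\le0$ and the second because $-|\dsc f_\infty|\ge0$; this is item \textit{3}. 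I expect no genuine obstacle: the whole content is in (i) noticing $\oo_R\subseteq I_R^*$ rather than the reverse inclusion, (ii) invoking the conductor--discriminant formula, and (iii) keeping the orientation of $|~|$ consistent between $A$ and $A_\infty$; once these are fixed, all three items fall out of two one-line degree computations.
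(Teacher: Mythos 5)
Your proposal is correct and follows essentially the same route as the paper: the chain $R[\t_R]\subseteq\oo_R\subseteq I^*$ together with multiplicativity of the index, and the relation $\dsc f=[\oo_R:R[\t_R]]^2\dsc(\oo_R)$ yielding $|[\o1:A[\theta]]|\le\tfrac12|\dsc f|$ (with the mirrored signs over $A_\infty$). The only cosmetic difference is that you carry the term $\dsc(\oo_R)$ as an explicit correction and then discard it by its sign, whereas the paper phrases the same step as a divisibility.
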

\begin{proof}
Since the exponents in the decomposition of $I^*$ and $I_\infty^*$ are nonpositive integers, we have $A[\theta]\subseteq\o1\subseteq I^*$ and $A_\infty[\theta_\infty]\subseteq\oi\subseteq I_\infty^*$. Then, by the properties of the index of modules we deduce $[ I^*:A[\theta]]=r A$ with $r\in A$ and $[ I_\infty^*:A_\infty[\theta_\infty]]=r' A_\infty$ with $r'\in A_\infty$; hence, $h(I)=|r|\geq 0$ and $h(I_\infty)=-|r'|\geq 0$. Since $|\dsc f|,\ -|\dsc f_\infty|\geq 0$, we deduce $ H(I),\ H(I_\infty)\geq 0$.

For the second statement we use the transitivity of the index 
$$[ I^*:A[\theta]]=[ I^*:\o1][ \o1:A[\theta]].
$$ 
Denote by $\bb=(b_0,\dots,b_{n-1})$ a basis of $\oo_F$ and let $\bb_\t=(1,\t,\dots,\t^{n-1})$. By \cite{Zass} it holds
$$
\dsc f=\det (\Tr(\t^{i+j}))_{0\leq i,j<n}=(\det T(\bb_\t\rightarrow \bb))^2\cdot \det (\Tr(b_ib_j))_{0\leq i,j<n}.
$$
Then, $(\det T(\bb_\t\rightarrow \bb))^2$ divides $\dsc f$ and therefore $(\dsc f )A\subset (\det T(\bb_\t\rightarrow \bb))^2A=[\o1:A[\t]]^2$. Hence, $|(\dsc f )A|=|\dsc f|\geq 2|[\o1:A[\t]]|$, and in particular $|[ I^*:A[\theta]]|=|[ I^*:\o1][ \o1:A[\theta]]|\leq |[ I^*:\o1]|+\frac{1}{2}|\dsc f|\leq H(I)$. Item \textit{3} can be shown analogously.
\end{proof}

\begin{definition}
Let $\bb$ be a basis of a fractional ideal $I$ of $\oo_R$ and $T$ the transition matrix from $\bb$ to $\bb_{\t_R}$. We call $\bb$ an \emph{Hermite basis} of $I$, if the matrix $hT$ is in Hermite normal form (HNF), for any $h\in R\setminus R^*$ such that $hT\in R^{n\times n}$.
\end{definition}

\begin{lemma}
Every ideal $I$ of $\oo_R$ admits a unique Hermite basis.
\end{lemma}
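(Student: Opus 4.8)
The statement is an instance of the existence and uniqueness of the Hermite normal form over the principal ideal domain $R$, transported to $I$ through the fixed basis $\bb_{\t_R}$ of $R[\t_R]$. The plan is to first fix the dictionary between $R$-bases of $I$ and matrices over $R$, then invoke HNF over $R$ for both halves of the claim.

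For the dictionary: both $\oo_R$ and any nonzero fractional ideal $I$ of $\oo_R$ are free $R$-modules of rank $n$, and $R[\t_R]\subseteq\oo_R$ is of finite index, so $I$ is commensurable with $R[\t_R]$ inside $F$. Hence $\mathfrak d(I):=\{h\in R\mid hI\subseteq R[\t_R]\}$ is a nonzero ideal of $R$; let $d$ be its normalized generator, and note that $R$ not being a field, $\mathfrak d(I)$ contains non-units. For an $R$-basis $\bb$ of $I$ with transition matrix $T=T(\bb\to\bb_{\t_R})\in\gl_n(K)$, one has $hT\in R^{n\times n}$ exactly when $h\in\mathfrak d(I)$, and $\mathfrak d(I)$ does not depend on $\bb$: two $R$-bases of $I$ differ by a matrix of $\gl_n(R)$, so their transition matrices to $\bb_{\t_R}$ differ by left multiplication by $\gl_n(R)$ and clear the same denominators. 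Consequently — once one checks that the ambient HNF normalization is stable under multiplication by elements of $R$ — the requirement on an Hermite basis concentrates in the single matrix $d\,T\in R^{n\times n}$: for an admissible $h=c\,d$ one has $hT=c\,(dT)$, so $dT$ being in HNF forces every $hT$ to be in HNF.

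For existence I would start from an arbitrary $R$-basis $\bb_0$ of $I$, with transition matrix $T_0$, so that $M_0:=d\,T_0\in R^{n\times n}$ is nonsingular. By the existence part of the Hermite normal form over the PID $R$, there is $U\in\gl_n(R)$ with $H:=U M_0$ in HNF. Then the $R$-basis $\bb$ of $I$ whose coordinate column vector is $U$ times that of $\bb_0$ has transition matrix $U T_0$ to $\bb_{\t_R}$, and $d\cdot U T_0=U M_0=H$ is in HNF; hence $\bb$ is an Hermite basis. For uniqueness, let $\bb,\bb'$ be Hermite bases of $I$ with transition matrices $T,T'$. Since both are $R$-bases of $I$, $T'=V T$ for a unique $V\in\gl_n(R)$, so $d\,T'=V(d\,T)$ with both $d\,T$ and $d\,T'$ in HNF; the uniqueness part of the Hermite normal form over $R$ then gives $d\,T=d\,T'$, whence $T=T'$ and $\bb=\bb'$.

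The only genuine input is the existence and uniqueness of HNF over a principal ideal domain, with a fixed system of representatives for the associate classes and for the relevant residue rings — and here one really uses the PID statement, since for $R=A$ the ring is not local (only for $R=A_\infty$, a DVR, is the simpler picture available). The step I expect to be the main obstacle is the bookkeeping around the clause ``for any $h$'' in the definition: making this precise amounts to checking that the chosen HNF normalization is compatible with scaling by elements of $R$, so that the condition on $d\,T$ indeed controls all admissible $hT$; this is what makes ``Hermite basis'' a well posed and non-vacuous notion, and it should be spelled out in line with the exact normalization used in Definition \ref{deffiofheight} and its surrounding discussion.
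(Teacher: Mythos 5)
Your argument is correct. Note that the paper itself gives no proof of this lemma (it is stated as a bare assertion), so there is nothing to compare against; your reduction to the existence and uniqueness of the Hermite normal form over the PID $R$, via the denominator ideal $\mathfrak d(I)=\{h\in R\mid hI\subseteq R[\t_R]\}$ and its generator $d$, is the standard argument and surely the intended one. The one point you rightly flag --- that the clause ``for any $h$'' is only well posed if the HNF normalization is stable under scaling by elements of $R$ --- is resolved by the normalization the paper implicitly uses (see the proof of Lemma \ref{hightideal}: $T$ triangular with $|t_{i,j}|\leq|t_{j,j}|$), which is invariant under multiplication by nonzero scalars, so the condition on all admissible $hT$ is indeed equivalent to the condition on $dT$ alone and both your existence and uniqueness steps go through.
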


Let $\bb$ be an Hermite basis of $I$ and $T=T(\bb\rightarrow \bb_{\t_R})$. The diagonal entries $d_1,\dots,d_n\in K$ of $T$ are canonical invariants of the fractional ideal $I$, which only depend on $f$, the defining polynomial of $F/k$. In particular, $[R[\t_R]:I]=(d_1\cdots d_n )R$. 

From the fact that $I$ is an ideal we deduce $d_n|\cdots |d_1$; that is, $d_i/d_{i+1}\in R$ for all $i$. We call these elements the \emph{elementary divisors} of $I$. If $I$ is contained in $R[\t_R]$, we obtain $d_1,\dots, d_n\in R$ and
$$
R[\t_R]/I\cong R/d_1R\times \cdots \times R/d_nR.
$$
For any subset $S\subset R$, we call an element $h\in S\setminus\{0\}$ minimal if $\deg h$ or $v_\infty(h)$ is minimal among all other elements in $S$, for $R=A$ or $R=A_\infty$, respectively.

\begin{lemma}\label{hightideal}
Let $\bb$ be an Hermite basis of a fractional ideal $I$ of $\oo_R$ and $(t_{i,j})=T(\bb\rightarrow \bb_{\t_R})$. For $g\in R$ minimal such that $gT\in R^{n\times n}$ it holds, 
$$
|gt_{i,j}|\leq  H(I)\quad\text{or}\quad v_\infty(gt_{i,j})\leq H(I)
$$
according to $R=A$ or $R=A_\infty$.
\end{lemma}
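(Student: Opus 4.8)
The plan is to bound the entries $gt_{i,j}$ not one at a time, but through a single invariant of a suitable integral ideal, and to estimate that invariant by $H(I)$ place by place.

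First I would observe that $\mathfrak a := gI$ is an integral ideal of the equation order $R[\t_R]$. Indeed $gT\in R^{n\times n}$ is equivalent to $gI\subseteq R[\t_R]$ (the rows of $T$ express a basis of $I$ in the power basis $\bb_{\t_R}$), so the minimal such $g$ generates the ideal $\{a\in R: aI\subseteq R[\t_R]\}$ of the PID $R$ and $\mathfrak a\subseteq R[\t_R]$; moreover $\mathfrak a$ is stable under multiplication by $\t_R$ because $\t_R\in\oo_R$ and $I$ is an $\oo_R$-module, hence a full-rank integral ideal of $R[\t_R]$. Since $\bb$ is an Hermite basis of $I$, the matrix $gT$ is the Hermite normal form of $\mathfrak a$ with respect to $\bb_{\t_R}$. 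By the structure of the HNF of an ideal recalled in Subsection~\ref{ideals} — the diagonal entries satisfy $d_n\mid\cdots\mid d_1$ and $R[\t_R]/\mathfrak a\cong\prod_i R/d_iR$ — every $d_i$ divides $d_1$, the off-diagonal entries are reduced modulo the diagonal ones, and $(d_1)=\mathfrak a\cap R=\operatorname{Ann}_R(R[\t_R]/\mathfrak a)$. Hence every entry of $gT$ has degree at most $|d_1|$ (resp.\ $v_\infty$-value at most $v_\infty(d_1)$), and it suffices to prove $|\mathfrak a\cap R|\le H(I)$, and likewise the $A_\infty$ version.

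Next I would control $g$ and the contraction $\mathfrak a\cap R$. Write $I=\prod_\p\p^{a_\p}$. The classical identity $(\dsc f_R)\,\oo_R\subseteq R[\t_R]$ — Cramer's rule applied to the trace form on $\oo_R$ expressed in $\bb_{\t_R}$, where $f_R$ is $f$ or $f_\infty$ — together with $h_0I\subseteq\oo_R$ for the minimal $h_0\in R$ with that property, gives $(\dsc f_R)h_0I\subseteq R[\t_R]$, hence $g\mid(\dsc f_R)h_0$; from $v_\p|_K=e(\p/p_0)v_{p_0}$ one computes $v_{p_0}(h_0)=\max\bigl(0,\max_{\p\mid p_0,\ a_\p<0}\lceil -a_\p/e(\p/p_0)\rceil\bigr)$. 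Using $(gI)\cap K=g\,(I\cap K)$ and $I\cap K=\prod_{p_0}p_0^{\,b_{p_0}}$ with $b_{p_0}=\max_{\p\mid p_0}\lceil a_\p/e(\p/p_0)\rceil$, one obtains for each prime $p_0$ of $R$
$$
v_{p_0}(\mathfrak a\cap R)=\max\bigl(0,\ v_{p_0}(g)+b_{p_0}\bigr)\le v_{p_0}(\dsc f_R)+\max\bigl(0,\ v_{p_0}(h_0)+b_{p_0}\bigr).
$$
An elementary case analysis — according as all $a_\p\ge 0$ for $\p\mid p_0$ or not, using $\lceil m/e\rceil\le m$ for integers $m\ge 1$, $e\ge 1$, and that a positive $\lceil a_\p/e(\p/p_0)\rceil$ and a positive $\lceil -a_{\p'}/e(\p'/p_0)\rceil$ must come from distinct $\p\ne\p'$ over $p_0$ — then gives $\max(0,v_{p_0}(h_0)+b_{p_0})\le\sum_{\p\mid p_0}|a_\p|f(\p/p_0)$. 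Summing over $p_0$ with weights $\deg p_0$, using $\deg\p=f(\p/p_0)\deg p_0$ and $\sum_{p_0}v_{p_0}(\dsc f_R)\deg p_0=|\dsc f_R|$, yields $|\mathfrak a\cap R|\le|\dsc f_R|+\sum_\p|a_\p|\deg\p=|\dsc f_R|+|[I^*:\oo_R]|=H(I)$. The case $R=A_\infty$ is identical, with $v_\infty$ in place of the degree and the sign conventions of Definition~\ref{deffiofheight}.

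The hard part is the first reduction: recognising $gI$ as an \emph{integral} ideal of $R[\t_R]$, so that the canonicity of the Hermite basis forces all of its coordinates to be governed by the single number $|\mathfrak a\cap R|$. A naive estimate through $\det(gT)$ would only bound the entries by $n|g|+|[R[\t_R]:I]|$, which is far too weak. Once the reduction is in place, the divisibility $g\mid(\dsc f_R)h_0$ and the combinatorial estimate are routine, the only delicate bookkeeping being the ceiling functions at ramified primes and the signs in the $A_\infty$ case.
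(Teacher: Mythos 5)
Your proof is correct, but it takes a genuinely different route from the one in the paper. The paper never passes through the contraction $gI\cap R$: it splits $I=I_1\cdot I_2$ into its parts with negative and positive exponents and runs two separate minimality arguments. First, $|g|\le h(I_1)$, because the generator $r$ of $[I_1:R[\t_R]]$ already satisfies $rI\subseteq rI_1\subseteq R[\t_R]$; second, each diagonal entry satisfies $|t_{j,j}|\le |g'|\le h(I_2)$, where $g'$ is minimal with $g'R[\t_R]\subseteq I$ — a dual of the bound on $g$, read off from the diagonal $t_{j,j}^{-1}$ of $T^{-1}$. The HNF reducedness $|t_{i,j}|\le|t_{j,j}|$ then gives $|gt_{i,j}|\le h(I_1)+h(I_2)$, and Lemma \ref{helpbigh} (which rests on $[\oo_R:R[\t_R]]^2$ dividing $\dsc f_R$) closes the gap to $H(I)$. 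You instead collapse all entries onto the single invariant $|gI\cap R|=|gd_1|$, which requires not only the reducedness of the off-diagonal entries but also the divisibility chain $d_n\mid\cdots\mid d_1$ (the paper's proof does not need the chain, since it bounds each $|t_{j,j}|$ individually); you then evaluate that invariant place by place via the conductor inclusion $\dsc(f_R)\,\oo_R\subseteq R[\t_R]$ and explicit ceilings at ramified primes. Your route is more self-contained — it bypasses Lemma \ref{helpbigh} and lands directly on $|\dsc f_R|+|[I^*:\oo_R]|=H(I)$ — at the cost of the local bookkeeping; the paper's version is shorter because both estimates stay at the level of module indices and never descend to individual primes. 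All the structural facts you invoke about Hermite bases of ideals (the divisibility chain, $R[\t_R]/\mathfrak a\cong\prod_i R/d_iR$, and $\mathfrak a\cap R=(d_1)$ for the constant coordinate) are indeed stated in Subsection \ref{ideals}, so your reduction is legitimate within the paper's framework.
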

In order to proof this statement we will use the following lemma.

\begin{lemma}\label{helpbigh}
For $I=\prod_{\p\in\ma(\oo_R)}\p^{a_\p}$, we write $I=I_1\cdot I_2$, where $I_1=\prod_{a_\p<0}\p^{a_\p}$ and $I_2=\prod_{a_\p>0}\p^{a_\p}$. Then, 
$$
h(I_1)+h(I_2)\leq H(I).
$$
\end{lemma}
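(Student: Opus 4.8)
The strategy is to unwind Definition~\ref{deffiofheight} and reduce the whole inequality to the one non-formal fact already used in the proof of Lemma~\ref{heightlamma}, namely the divisibility $(\det T(\bb_{\t_R}\to\bb))^2\mid \dsc f$ (resp.\ $\dsc f_\infty$) for a basis $\bb$ of $\oo_R$, which says exactly that $2\,|[\oo_R:R[\t_R]]|\le|\dsc f|$ when $R=A$, and the mirrored $v_\infty$-inequality when $R=A_\infty$. Everything else will be bookkeeping with degrees of fractional ideals and the transitivity of the index.

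First I would record the elementary identities among the starred ideals. Since $I_1=\prod_{a_\p<0}\p^{a_\p}$ has only nonpositive exponents, $I_1^*=I_1$; since $I_2=\prod_{a_\p>0}\p^{a_\p}$, $I_2^*=I_2^{-1}$; and comparing exponents prime by prime, $I^*=\prod_\p\p^{-|a_\p|}=I_1^*\cdot I_2^*$. Because all three of $I_1^*,I_2^*,I^*$ have nonpositive exponents, they contain $\oo_R$, hence contain $R[\t_R]$, and moreover $I_1^*\subseteq I^*$ since $I_2^*\supseteq\oo_R$. Now applying transitivity of the index along the chain $R[\t_R]\subseteq\oo_R\subseteq I_1^*\subseteq I^*$, exactly as in the proof of Lemma~\ref{heightlamma}, gives
$$
|[I^*:R[\t_R]]|=|[I^*:I_1^*]|+|[I_1^*:\oo_R]|+|[\oo_R:R[\t_R]]|,
$$
and similarly $|[I_1^*:R[\t_R]]|=|[I_1^*:\oo_R]|+|[\oo_R:R[\t_R]]|$ together with the analogous identity for $I_2^*$.

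The one step that is not purely formal, and which I expect to be the main obstacle, is the identity $|[I^*:I_1^*]|=|[I_2^*:\oo_R]|$; this is where the Dedekind structure of $\oo_R$ enters. I would obtain it either from invariance of the index under multiplication by the fixed fractional ideal $I_1^*$ (localize at each maximal ideal of $R$, where $I_1^*$ becomes principal and rescaling a basis does not change the transition determinant), or, equivalently, from the fact that $I_1^*$ and $I_2^*$ have disjoint support, which yields $I^*/\oo_R\cong I_1^*/\oo_R\oplus I_2^*/\oo_R$ and hence additivity of the corresponding $k$-dimensions. Granting this, combining the relations above gives $h(I_1)+h(I_2)=|[I^*:R[\t_R]]|+|[\oo_R:R[\t_R]]|$ when $R=A$, while Definition~\ref{deffiofheight} and transitivity give $H(I)=|[I^*:R[\t_R]]|-|[\oo_R:R[\t_R]]|+|\dsc f|$; subtracting, $H(I)-h(I_1)-h(I_2)=|\dsc f|-2\,|[\oo_R:R[\t_R]]|\ge 0$ by the input recalled in the first paragraph. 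The case $R=A_\infty$ runs identically after replacing the $|~|$-inequalities by the corresponding $v_\infty$-inequalities, exactly as Item~3 of Lemma~\ref{heightlamma} is deduced from Item~2, which finishes the proof.
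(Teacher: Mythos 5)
Your proof is correct and follows essentially the same route as the paper: both reduce the claim, via $I^*=I_1^*I_2^*$ and transitivity/multiplicativity of the index, to the identity $h(I_1)+h(I_2)=|[I^*:\oo_R]|+2\,|[\oo_R:R[\t_R]]|$ and then invoke $2\,|[\oo_R:R[\t_R]]|\le|\dsc f|$ from the proof of Lemma~\ref{heightlamma}. The only difference is cosmetic: you make explicit (via localization) the multiplicativity step $|[I^*:I_1^*]|=|[I_2^*:\oo_R]|$ that the paper absorbs into the unproved displayed identity $[I^*:\oo_F][\oo_F:A[\t]]^2=[I_1^*:A[\t]][I_2^*:A[\t]]$.
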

\begin{proof}
Let $R=A$, the case $R=A_\infty$ can be treated analogously. Clearly, $I^*=I_1^*\cdot I_2^*$ by definition. Then, 
\begin{align*}
[I^*:\o1][\o1:A[\t]]^2=[I_1^*:A[\t]][I_2^*:A[\t]].
\end{align*}
According to the proof of Lemma \ref{heightlamma} we have $|[\o1:A[\t]]|\leq \frac{1}{2}|\dsc f|$; hence, $h(I_1)+h(I_2)= |[I^*:\o1]|+2|[\o1:A[\t]]|\leq H(I)$.
\end{proof}

\begin{proof}[Proof of Lemma \ref{hightideal}]
We consider the case $R=A$. The case $R=A_\infty$ can be treated analogously. Let $I=I_1\cdot I_2$ with $I_1,I_2$ defined as in Lemma \ref{helpbigh}.

As $[I_1 : A[\theta]]=rA$ with $r\in A$, we deduce $|g| \leq |r| = |[I_1 : A[\theta]]|=h(I_1)$, by the minimality of $|g|$.

Since $\bb$ is an Hermite basis of $I$, the matrix $T:=T(\bb\rightarrow \bb_{\t_R})$ is triangular and the entries of the $j$-th column satisfy $|t_{i,j}|\leq |t_{j,j}|$, for $ j\leq i\leq n$. We consider the matrix $T^{-1}=T(\bb_{\t_R}\rightarrow \bb)$ which has the diagonal entries $t_{j,j}^{-1}$ for $1\leq j\leq n$. Let $g'\in A\setminus\{0\}$ be of minimal degree such that $g'A[\t]\subset I$ (or equivalently $g'T^{-1}\in A^{n\times n}$). Then, $|g't_{j,j}^{-1}|\geq 0$ and equivalently $|g'|\geq|t_{j,j}|$. 

Since  $[\o1:I_2]=r'A$ with $r'\in A$, we obtain $|g'|\leq|r'|= |[\o1:I_2]|$ by the minimality of $|g'|$. Now, $[\o1:I_2]=[\o1 : (I_2^*)^{-1}]=[I_2^* : \o1]$, so that $|g'|\leq |[I_2^* : \o1]|\leq h(I_2)$.

Finally, we deduce $|gt_{i,j}| \le h(I_1)+h(I_2) \le H(I)$, by Lemma \ref{helpbigh}. 
\end{proof}

\begin{corollary}\label{hightideal2}
Let $\bb$ be an Hermite basis of a fractional ideal $I$ of $\oo_R$. Suppose that $T=T(\bb\rightarrow \bb_{\t_R})=(f_{i,j}/h_{i,j})$ with coprime polynomials $f_{i,j},h_{i,j}\in A$, and let $g\in A\setminus\{0\}$ be of minimal degree such that $gT\in A^{n\times n}$. For $1\leq i,j\leq n$, we have
$$
|g|+\max\{|f_{i,j}|,|h_{i,j}|\}\leq 2H(I).
$$

\end{corollary}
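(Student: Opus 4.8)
The plan is to deduce the inequality from Lemma~\ref{hightideal} together with the coprimality hypothesis, without re-examining the Hermite structure of $\bb$. Fix $1\le i,j\le n$; we may assume $t_{i,j}:=f_{i,j}/h_{i,j}\ne 0$, since if $t_{i,j}=0$ we take $f_{i,j}=0$, $h_{i,j}=1$ and the left-hand side equals $|g|\le H(I)$ by Lemma~\ref{hightideal}. Let $g\in A\setminus\{0\}$ be of minimal degree with $gT\in A^{n\times n}$, as in the statement. Lemma~\ref{hightideal} gives $|gt_{i,j}|\le H(I)$, that is,
\[
|g|+|f_{i,j}|-|h_{i,j}|\le H(I).
\]

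Next I would record two elementary facts. First, $gt_{i,j}=gf_{i,j}/h_{i,j}\in A$ and $\gcd(f_{i,j},h_{i,j})=1$ force $h_{i,j}\mid g$, hence $|h_{i,j}|\le|g|$; this is the only place the coprimality hypothesis enters. Second, the proof of Lemma~\ref{hightideal} shows $|g|\le h(I_1)$, where $I=I_1I_2$ is the factorization of Lemma~\ref{helpbigh}; combining this with $h(I_2)\ge0$ (Lemma~\ref{heightlamma}) and Lemma~\ref{helpbigh} gives $|g|\le h(I_1)\le h(I_1)+h(I_2)\le H(I)$. Thus $|h_{i,j}|\le|g|\le H(I)$.

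The bound then follows by a two-case check on which of $|f_{i,j}|$, $|h_{i,j}|$ is the larger. If $|f_{i,j}|\ge|h_{i,j}|$, then, using the displayed inequality together with $|h_{i,j}|\le|g|\le H(I)$,
\[
|g|+\max\{|f_{i,j}|,|h_{i,j}|\}=|g|+|f_{i,j}|\le H(I)+|h_{i,j}|\le 2H(I).
\]
If instead $|h_{i,j}|>|f_{i,j}|$, then $|g|+\max\{|f_{i,j}|,|h_{i,j}|\}=|g|+|h_{i,j}|\le 2|g|\le 2H(I)$. This proves the stated inequality; the analogous bound for an $\oo_{F,\infty}$-ideal follows by the same argument, replacing the degree function by $v_\infty$ and ``minimal degree'' by ``minimal $v_\infty$'', exactly as in the $A_\infty$-case of Lemma~\ref{hightideal}. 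I expect no genuine obstacle here beyond noticing the divisibility $h_{i,j}\mid g$ and the fact that $|g|\le H(I)$, which, although not stated as a conclusion of Lemma~\ref{hightideal}, is visible in its proof.
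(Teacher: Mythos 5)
Your argument for the case $R=A$ is correct and complete: the divisibility $h_{i,j}\mid g$ from coprimality, the bound $|g|\le h(I_1)\le H(I)$ extracted from the proof of Lemma~\ref{hightideal} (via Lemmas~\ref{helpbigh} and~\ref{heightlamma}), and the two-case check together give the inequality. This is essentially what the paper means when it says the $R=A$ case is ``a direct consequence of Lemma~\ref{hightideal}''; you have simply made the deduction explicit, which is fine.

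The genuine gap is the last sentence, where you dispose of $R=A_\infty$ by ``the same argument, replacing the degree function by $v_\infty$.'' That substitution does not produce the corollary. For $R=A_\infty$ the conclusion is still a statement about $t$-degrees of polynomials $f_{i,j},h_{i,j},g\in A=k[t]$, with $g$ of minimal $t$-degree such that $gT\in A^{n\times n}$; but Lemma~\ref{hightideal} for $R=A_\infty$ controls $v_\infty(g't_{i,j})$ for a \emph{different} element $g'\in A_\infty$, minimal with respect to $v_\infty$, such that $g'T\in A_\infty^{n\times n}$. An upper bound on $v_\infty(g't_{i,j})$ is a \emph{lower} bound on $|g't_{i,j}|$, so it cannot by itself bound $|f_{i,j}|$ or $|h_{i,j}|$ from above, and your key step $|g|+|f_{i,j}|-|h_{i,j}|\le H(I)$ has no analogue. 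The paper's proof of this case is correspondingly longer: it uses the Hermite (triangular) structure to see that the entries of $g'T$ lie in $k[t^{-1}]$ with $t^{-1}$-degree bounded by $H(I)$ (the diagonal entries are $t^{-1}$-powers and dominate their columns), and only then converts these $t^{-1}$-degree bounds into $t$-degree bounds by writing each entry as $f_{i,j}/t^{m_{i,j}}$ with $|f_{i,j}|\le H(I)$ and $0\le m_{i,j}\le H(I)$, taking $g$ to be a suitable power of $t$. Without this extra structural input about the denominators being $t$-powers of controlled exponent, the $A_\infty$ case does not follow, so this half of the corollary remains unproved in your write-up.
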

\begin{proof}
For $R=A$ the statement is a direct consequence of Lemma \ref{hightideal}.

Let $R=A_\infty$. We consider the elementary divisors $d_1,\dots,d_n$ of $I$, which satisfy $d_i=t^{\alpha_i}$ with $\alpha_i\in \Z$ and $\alpha_1\leq \cdots\leq \alpha_n$, since $d_{i}/d_{i+1}\in A_\infty$ for all $i$. We fix $g'=t^{-\beta}$, where $\beta=\max\{\alpha_n,0\}$. Then, $g'\in A_\infty$ is minimal with $g'T\in A_\infty^{n\times n}$. Lemma \ref{hightideal} shows that $v_\infty(g't_{i,j})\leq  H(I)$, where $
(t_{i,j})=T$. Since $g'T$ is in HNF, the diagonal entries are $t^{-1}$-powers, and in particular $v_\infty(g't_{i,i})=\deg_{t^{-1}}(g't_{i,i})$ holds. Hence, the entries in $g'T$ satisfy $\deg_{t^{-1}}(g't_{i,j})\leq H(I)$ by the definition of the HNF in that context. For any $h\in k[t^{-1}]$ of $t^{-1}$-degree equal $m$ we can write $h=t^mh/t^m$ with $t^mh\in A$ and $|t^mh|\leq m$. Thus, any entry of $g'T$ can be written as $f_{i,j}/t^{m_{i,j}}$ with $f_{i,j}\in A$, $|f_{i,j}|\leq H(I)$, and $0\leq m_{i,j}\leq H(I)$. Clearly, there exists $m\in \Z$, with $m\leq H(I)$, such that $t^mf_{i,j}/t^{m_{i,j}}\in A$, for all $1\leq i,j\leq n$. We set $g:=t^m$ and $h_{i,j}:=t^{m_{i,j}}$ and obtain $|g|+\max\{|f_{i,j}|,|h_{i,j}|\}\leq 2H(I)$, for $1\leq i,j\leq n$.
\end{proof}

\subsection{complexity}
In \cite{Diem,Eisen} it is shown that the computation of a basis of a Riemann-Roch space $\ll(D)$ by Hess' algorithm is polynomially bounded in $n$ and the \emph{height} $h(D)$ of $D$ (see definition below). In the sequel we are going to give precise bounds for the complexity of the computation of the successive minima of a divisor $D$ and therefore for the computation of a basis of $\ll(D)$ by Algorithm \ref{Algo2}. We fix a divisor $D$ and denote by $(I,I_\infty)$ its ideal representation; that is, $\ll(D)=I\cap I_\infty$. We need a basis $\bb$ of the fractional ideal $I$ and a reduced basis $\bb'$ of the normed space $(F,\v{~})$. We assume that $\bb$ is a Hermite basis and that $\bb'$ is obtained by the algorithm explained in \cite{Stain}; that is, $\bb'$ is given by a triangular basis of the fractional ideal $I_\infty$, which is a reduced basis of $(F,\v{~})$. Note that in \cite{Stain} the basis $\bb'$ is constructed such that the entries of the transition matrix $T(\bb\rightarrow \bb_{\t_\infty})$ 
satisfy the bounds from Lemma \ref{hightideal}. Thus, for the complexity estimation we can assume that $\bb'$ is a Hermite basis of $I_\infty$. Note that in general a Hermite basis of $I_\infty$ is not reduced.

Denote by $T$ the transition matrix from $\bb$ to $\bb'$. Then, the rows of $T$ are given by the coordinate vectors $c_{\bb'}(b)$ for $b\in \bb$. 

Let $g_1,\dots,g_n \in A$ be nonzero polynomials of minimal degree such that $\widetilde{T}:=T\cdot \dia(g_1,\dots,g_n)\in A^{n\times n}$. Denote by $C(T)$ the cost of the computation of the transition matrix $T$. Then, the following statement follows immediately from Lemma \ref{reductionspeziell}.

\begin{lemma}\label{costrrcom}
Algorithm \ref{Algo2} needs at most
$$
C(T)+O(n^3h(\widetilde{T})(n+h(\widetilde{T})))
$$
arithmetic operations in $k$ to determine the successive minima of $D$ and a basis of $\ll(D)$.
\end{lemma}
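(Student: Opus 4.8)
The plan is to deduce this essentially directly from Lemma~\ref{reductionspeziell}; the only real work is putting the input into the polynomial form that lemma requires and keeping track of where the cost $C(T)$ enters.

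First I would compute the transition matrix $T=T(\bb\to\bb')$, which by definition costs $C(T)$ operations in $k$; here $\bb'$ may be taken orthonormal, being a reduced $A_\infty$-basis of $I_\infty=(F,\vv)_{\le 0}$, hence semi-orthonormal with all vector lengths in $(-1,0]$ (if only a reduced basis of $(F,\vv)$ is available, rescaling its vectors by suitable powers of $t$ yields an orthonormal one at negligible cost). The matrix $T$ need not have polynomial entries, so next, exactly as in the discussion preceding Algorithm~\ref{Algo0}, I would compute for $1\le i\le n$ a polynomial $g_i\in A$ of minimal degree with $\widetilde T:=T\cdot\dia(g_1,\dots,g_n)\in A^{n\times n}$; since $\deg g_i\le h(\widetilde T)$, each $g_i$ is an $\lcm$ of $n$ polynomials of degree $\le h(\widetilde T)$, so this step costs $O(n^2h(\widetilde T)^2)$ operations in $k$, which is absorbed into the final bound. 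The coordinate-wise scaling $(a_1,\dots,a_n)\mapsto(a_1g_1,\dots,a_ng_n)$ is an isometry of $(F,\vv)\cong\bot_i\kk(r_i)$ onto $\bot_i\kk(r_i-|g_i|)$ carrying the lattice $(I,\vv)$ onto the lattice $\widetilde L$ generated by the rows of $\widetilde T$; since isometries preserve lengths and the operators $L\mapsto L_{\le r}$, one has $\sm(\widetilde L)=\sm(I,\vv)=\sm(D)$, and the preimage under this isometry of a reduced basis of $\widetilde L$ is a reduced basis of $(I,\vv)$ with the same vector lengths (Lemma~\ref{mapsred} applied to the inverse isometry, or Theorem~\ref{sucmintored}).

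Thus it remains to reduce $\widetilde T$, which has polynomial entries and height $h(\widetilde T)$: by Lemma~\ref{reductionspeziell}, Algorithm~\ref{Algo2} does this in
$$
O\big(n^4h(\widetilde T)+n^3h(\widetilde T)^2\big)=O\big(n^3h(\widetilde T)(n+h(\widetilde T))\big)
$$
operations in $k$. From the resulting reduced basis $\bb=(b_1,\dots,b_n)$ of $(I,\vv)$ the successive minima $\sm(D)$ are simply the lengths $\v{b_1}\le\cdots\le\v{b_n}$ by item~(2) of Proposition~\ref{basic}, and a $k$-basis of $\ll(D)=(I,\vv)_{\le 0}$ is given by $\{b_it^{j_i}\mid 1\le i\le n,\ 0\le j_i\le-\lceil\v{b_i}\rceil\}$ by item~(3) of Proposition~\ref{basic} (equivalently Corollary~\ref{basisRRSpace}), with no further computation needed. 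Adding $C(T)$ from the first step gives the claimed total. I expect the only delicate points to be bookkeeping ones: verifying that clearing denominators stays within the stated budget, and checking that replacing $T$ by $\widetilde T$ genuinely leaves $\sm(D)$ and $\ll(D)$ recoverable — everything substantive is already contained in Lemma~\ref{reductionspeziell}.
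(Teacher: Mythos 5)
Your proposal is correct and follows the paper's route exactly: the paper gives no separate argument beyond defining $C(T)$ and the polynomial matrix $\widetilde T=T\cdot\dia(g_1,\dots,g_n)$, and then declares the bound to "follow immediately" from Lemma~\ref{reductionspeziell}, which is precisely your main step, with the successive minima and the basis of $\ll(D)$ read off via Proposition~\ref{basic} and Corollary~\ref{basisRRSpace} as you say. Your extra bookkeeping (cost of the $\lcm$'s, the isometry preserving $\sm$ and $L_{\le r}$) only makes explicit what the paper and its Remark~\ref{denominators} leave implicit; the one imprecise detail is the claim $\deg g_i\le h(\widetilde T)$ (one only gets $\deg g_i\le n\,h(T)$ in general), but this does not affect the stated complexity.
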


We are interested in a complexity estimation, which only depends on the data $n$ and $\cf$ of the defining polynomial $f$ of the function field and on the divisor $D$ (cf. Corollary \ref{complexRR}). Therefore, we estimate $h(\widetilde{T})$ and $C(T)$ in terms of $n,\cf$ and $h(D)$ (see below).

\begin{definition}[Divisor height]
For $D\in \dd_F$, we define \emph{the height} of $D$ by $h(D)=\deg  D^*$, where
$$
D^*= \sum_{P\in\pp_F}|v_P(D)|\cdot P.
$$
\end{definition}
Note that the height of a divisor is a nonnegative integer and $h(D)=0$ if and only if $D=0$.

We will now formulate some technical lemmas, which will be useful for further complexity estimations. 

\begin{lemma}\label{boundy}
Let $F/k$ be a function field with defining polynomial $f$ of degree $n$. Then, $\delta:=|\dsc f |$ and $\delta_\infty:=v_\infty(\dsc f_\infty )$ satisfy
$$
\delta,\delta_\infty\leq \delta+\delta_\infty=\cf n(n-1)=O(n^2\cf).
$$
In particular, it holds $|[\oo_F:A[\theta]]|\leq \delta$ and $-|[\oo_{F,\infty}:A_\infty[\theta_\infty]]|\leq \delta_\infty$.
\end{lemma}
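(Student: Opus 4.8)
The plan is to pin down the exact relationship between $\dsc f$ and $\dsc f_\infty$ forced by the substitution $x\mapsto t^{\cf}x$ that defines $f_\infty$, and then read off everything by applying $v_\infty$. Recall that the discriminant in $x$ of a monic polynomial $x^n+c_1x^{n-1}+\cdots+c_n$ equals $\prod_{i<j}(\al_i-\al_j)^2$ in terms of its roots $\al_1,\dots,\al_n$, hence is weighted-homogeneous of weight $2\binom n2=n(n-1)$ once $c_i$ is assigned weight $i$. Now the coefficient of $x^{n-i}$ in $f_\infty(t^{-1},x)=t^{-n\cf}f(t,t^{\cf}x)$ is $t^{-i\cf}a_i(t)$, so passing from $f$ to $f_\infty$ scales the weighted coefficients by $\lambda=t^{-\cf}$; equivalently, the roots of $f_\infty$ in $\Ks$ are $t^{-\cf}\theta_1,\dots,t^{-\cf}\theta_n$, where $\theta_1,\dots,\theta_n$ are the roots of $f(t,x)$. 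Either computation yields the clean identity
$$
\dsc f_\infty=t^{-\cf\, n(n-1)}\,\dsc f .
$$

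First I would record this identity carefully (the only place where one must be careful with the exponent: $2\binom n2=n(n-1)$). Then, applying $v_\infty$ and using that $\dsc f\in A\setminus\{0\}$ is nonzero by separability of $f$, so that $v_\infty(\dsc f)=-\deg_t(\dsc f)=-\delta$, I obtain
$$
\delta_\infty=v_\infty(\dsc f_\infty)=\cf\, n(n-1)+v_\infty(\dsc f)=\cf\, n(n-1)-\delta ,
$$
which is exactly $\delta+\delta_\infty=\cf\,n(n-1)$. For the inequalities it then suffices to observe that $\delta=\deg_t(\dsc f)\ge0$ and that $\delta_\infty=v_\infty(\dsc f_\infty)\ge0$ because $f_\infty\in k[t^{-1},x]$ forces $\dsc f_\infty\in k[t^{-1}]\subset A_\infty$; hence each of $\delta,\delta_\infty$ is bounded by their sum $\cf\,n(n-1)=O(n^2\cf)$.

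For the final two claims I would recycle the discriminant--index identity already used in the proof of Lemma \ref{heightlamma} (from \cite{Zass}): if $\bb=(b_0,\dots,b_{n-1})$ is an $A$-basis of $\oo_F$, then $\dsc f=(\det T(\bb_\t\to\bb))^2\cdot\det(\Tr(b_ib_j))_{0\le i,j<n}$, and the trace-form determinant lies in $A\setminus\{0\}$, hence has nonnegative degree; therefore $2\,|[\oo_F:A[\theta]]|=|(\det T(\bb_\t\to\bb))^2|\le|\dsc f|=\delta$, so $|[\oo_F:A[\theta]]|\le\delta/2\le\delta$. The infinite case is identical with $A,\theta,\bb_\t$ replaced by $A_\infty,\theta_\infty,\bb_{\t_\infty}$ and $|~|$ replaced by $v_\infty$: one gets $2\bigl(-|[\oo_{F,\infty}:A_\infty[\theta_\infty]]|\bigr)=v_\infty\bigl((\det T(\bb_{\t_\infty}\to\bb_\infty))^2\bigr)\le v_\infty(\dsc f_\infty)=\delta_\infty$, using that $\det(\Tr(b_ib_j))\in A_\infty\setminus\{0\}$ has $v_\infty\ge0$. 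The main (and essentially only) obstacle is establishing the monomial identity $\dsc f_\infty=t^{-\cf n(n-1)}\dsc f$ with the correct exponent; once that is in hand, the rest is bookkeeping with $v_\infty$ and the standard transitivity of discriminants, the one subtlety being the sign convention $|~|=-v_\infty$ on $A_\infty$.
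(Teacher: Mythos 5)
Your proof is correct. Note that the paper itself does not prove this lemma at all --- it simply cites \cite[Lemma 3.8]{GenCom} --- so there is no in-paper argument to compare against; what you have written is a valid self-contained replacement. The key identity $\dsc f_\infty=t^{-\cf n(n-1)}\dsc f$ is exactly right: the roots of $f_\infty$ are $t^{-\cf}\theta_1,\dots,t^{-\cf}\theta_n$, and with the paper's convention $\dsc f=\det(\Tr(\theta^{i+j}))_{0\le i,j<n}=\prod_{i<j}(\theta_i-\theta_j)^2$ (a Vandermonde square, so no sign ambiguity), scaling all roots by $t^{-\cf}$ multiplies the discriminant by $t^{-2\cf\binom n2}$. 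Applying $v_\infty$ gives $\delta+\delta_\infty=\cf n(n-1)$, and the nonnegativity of each term follows, as you say, from $\dsc f\in A\setminus\{0\}$ and $\dsc f_\infty\in k[t^{-1}]\setminus\{0\}$ (the latter because $\deg a_i\le i\cf$ forces $t^{-i\cf}a_i(t)\in k[t^{-1}]$). Your derivation of the two index bounds recycles precisely the discriminant--index factorization $\dsc f=(\det T(\bb_\t\to\bb))^2\cdot\det(\Tr(b_ib_j))$ that the paper already deploys in the proof of Lemma \ref{heightlamma}, together with the observation that the trace-form determinant of a basis of the integral closure lies in $R\setminus\{0\}$; this even yields the slightly stronger bounds $|[\oo_F:A[\theta]]|\le\delta/2$ and $-|[\oo_{F,\infty}:A_\infty[\theta_\infty]]|\le\delta_\infty/2$, consistent with what Lemma \ref{heightlamma} uses.
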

\begin{proof}
See \cite[Lemma 3.8]{GenCom}.
\end{proof}

\begin{lemma}\label{hightbound}\quad
\begin{enumerate}
\item $h( I)+h( I_\infty)\leq H( I)+H( I_\infty) =O(h(D)+n^2\cf).$
\item $h(\widetilde{T})=O(nh(D)+n^3\cf)$.
\end{enumerate}
\end{lemma}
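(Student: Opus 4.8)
The plan is to prove both items by unwinding the definitions of the heights $h(\cdot)$, $H(\cdot)$ and relating the relevant indices to the divisor height $h(D)$ and the discriminant bounds from Lemma \ref{boundy}.

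\textbf{Item (1).} First I would observe that the inequality $h(I)+h(I_\infty)\le H(I)+H(I_\infty)$ is immediate from Lemma \ref{heightlamma} (items 2 and 3). So the content is the estimate $H(I)+H(I_\infty)=O(h(D)+n^2\cf)$. Recall $H(I)=|[I^*:\oo_F]|+|\dsc f|$ and $H(I_\infty)=-|[I^*_\infty:\oo_{F,\infty}]|-|\dsc f_\infty|$. By Lemma \ref{boundy}, $|\dsc f|+|\dsc f_\infty|=\cf n(n-1)=O(n^2\cf)$, so it remains to bound the index terms. Now $I=\prod_{Q\in\pp_0}\q^{-\alpha_Q}$ and $I_\infty=\prod_{P\in\pp_\infty}\p^{-\beta_P}$, hence $I^*=\prod_{Q}\q^{-|\alpha_Q|}$ and $I^*_\infty=\prod_P\p^{-|\beta_P|}$. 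The key point is that $|[I^*:\oo_F]|-|[I^*_\infty:\oo_{F,\infty}]|$ measures the total "size" of the fractional ideal $(I^*,I^*_\infty)$, and since $\deg(\q)=f(\q/Q)\cdot\deg(Q)$-type identities give $\deg((I^*,I^*_\infty))=-\sum_Q|\alpha_Q|\deg Q-\sum_P|\beta_P|\deg P$, one gets $|[I^*:\oo_F]|-|[I^*_\infty:\oo_{F,\infty}]|\le \deg D^*=h(D)$ up to the bounded correction coming from $[\oo_F:A[\theta]]$ and $[\oo_{F,\infty}:A_\infty[\theta_\infty]]$, which by Lemma \ref{boundy} is $O(n^2\cf)$. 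Combining, $H(I)+H(I_\infty)=O(h(D)+n^2\cf)$.

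\textbf{Item (2).} Here $\widetilde T=T\cdot\dia(g_1,\dots,g_n)$ where $T=T(\bb\to\bb')$ is the transition matrix from the Hermite basis $\bb$ of $I$ to the Hermite basis $\bb'$ of $I_\infty$. I would factor $T$ through the monomial basis $\bb_{\t}$ (equivalently $\bb_{\t_\infty}$, since $\t_\infty=\t/t^{\cf}$): write $T=T(\bb\to\bb_\t)\cdot T(\bb_\t\to\bb')$, i.e. $T=T(\bb\to\bb_\t)\cdot T(\bb'\to\bb_{\t_\infty})^{-1}\cdot D_0$ where $D_0=\dia(t^{-\cf},\dots,t^{-\cf(n-1)})$ or similar accounts for the change $\t\leftrightarrow\t_\infty$, contributing a height $O(n\cf)$. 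By Corollary \ref{hightideal2} applied to the Hermite basis $\bb$ of $I$, there is $g\in A$ with $|g|+h\big(T(\bb\to\bb_\t)\big)\le 2H(I)$, and likewise (the $A_\infty$-case of Corollary \ref{hightideal2}, or Lemma \ref{hightideal}) the entries of $T(\bb'\to\bb_{\t_\infty})$, cleared of denominators, have height $O(H(I_\infty))$. Using Lemma \ref{hightinversemat}(2), inverting the $n\times n$ matrix $T(\bb'\to\bb_{\t_\infty})$ costs at most a factor $n$ in height, so $h\big(T(\bb_{\t_\infty}\to\bb')\big)=O(nH(I_\infty))$. Then by Lemma \ref{hightinversemat}(1) the product has height $O(H(I)+nH(I_\infty)+n\cf)$, and after clearing the remaining denominators (the $g_i$), $h(\widetilde T)=O(nH(I)+nH(I_\infty)+n\cf)=O\big(n(h(D)+n^2\cf)\big)=O(nh(D)+n^3\cf)$ by item (1).

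\textbf{Main obstacle.} The delicate part is item (2): carefully tracking how the denominators and the passage through $\bb_\t$ versus $\bb_{\t_\infty}$ interact, and making sure the single matrix inversion (Lemma \ref{hightinversemat}(2)) is the only source of the extra factor of $n$, rather than accumulating more. I expect one has to be slightly careful that the "minimal $g$" in Corollary \ref{hightideal2} for $I$ and the analogous clearing denominator for $I_\infty$ can be absorbed into the $g_i$ without blowing up the degree beyond $O(H(I)+H(I_\infty))$; this is where Lemma \ref{hightideal} (rather than just the corollary) is needed to control the individual column denominators of the Hermite matrices.
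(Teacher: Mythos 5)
Your proposal is correct and follows essentially the same route as the paper: item (1) via the norm/index identity $\deg D_0^*=|[I^*:\oo_F]|$ (and its infinite analogue) combined with Lemma \ref{boundy}, and item (2) by factoring $T$ through the monomial bases, applying Corollary \ref{hightideal2} to each Hermite matrix and Lemma \ref{hightinversemat} to the single inversion. Your intermediate bound $O(nH(I)+nH(I_\infty)+n\cf)$ is slightly weaker than the paper's $O(H(I)+nH(I_\infty)+n\cf)$ (the paper only inverts the matrix on the $I_\infty$ side, so no factor $n$ hits $H(I)$), but this does not affect the final estimate $O(nh(D)+n^3\cf)$.
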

\begin{proof}
In order to prove the first item we consider $D=\sum_{P\in\pp_F}a_PP$ and set $D_0=\sum_{P\in\pp_0(F)}a_PP$ and $D_\infty=\sum_{P\in\pp_\infty(F)}a_PP$. The ideal representation of $D$ is given by $(I,I_\infty)$ with
$$
I=\prod_{P\in\pp_0(F)}\p^{-a_P},\quad I_\infty=\prod_{P\in\pp_\infty(F)}\p^{-a_P},
$$
where the prime ideals $\p$ of $F$ corresponds to the places $P$ of $F$. We consider $D^*_0=\sum_{P\in\pp_0(F)}|a_P|P$ and $D^*_\infty=\sum_{P\in\pp_\infty(F)}|a_P|P$ and set $I^*=\prod_{P\in\pp_0(F)}\p^{-|a_P|}$ and $I^*_\infty:=\prod_{P\in\pp_\infty(F)}\p^{-|a_P|}$ as in (\ref{istern}). It is well known  that
$$
[\o1:I^*]=N_{F/K}(I^*)=\prod_{P\in\pp_0(F)}N_{F/K}(\p)^{-|a_P|}.
$$
Since $\deg  P=|N_{F/K}(\p)|$ \cite{H.H.}, we obtain, $|[\o1:I^*]|=\sum_{P\in\pp_0(F)} -|a_P|\deg  P=-\deg  D^*_0$. As $|[\o1:I^*]|=-|[I^*:\o1]|$, we get
\begin{align}\label{degindexform}
\deg  D^*_0=|[I^*:\oo_F]|=|[I^*:A[\t]]|-|[\o1:A[\t]]|.
\end{align}
Analogously, one can show $\deg  D^*_\infty=-|[I^*_\infty:A_\infty[\t_\infty]]|+|[\oi:A_\infty[\t_\infty]]|$. Then, by the definition of the height of an ideal (cf. Definition \ref{deffiofheight}) and of a divisor, we obtain $\deg  D^*_0=h(D_0)=h( I)-|[\oo_F:A[\theta]]|$ and $\deg  D^*_\infty= h(D_\infty)=h( I_\infty)+|[\oo_{F,\infty}:A_\infty[\theta_\infty]]|$. Since the supports of $D_0$ and $D_\infty$ are disjoint, we obtain
$$
h(D)=h(D_0)+h(D_\infty)=h( I)-|[\oo_F:A[\theta]]|+h( I_\infty)+|[\oo_{F,\infty}:A_\infty[\theta_\infty]]|
$$
and therefore $h( I)+h( I_\infty)\leq h(D)+\delta+\delta_\infty$. Clearly, $H(I)\leq h(I)+\delta$ and $H(I_\infty)\leq h(I_\infty)+\delta_\infty$ (cf. Definition \ref{deffiofheight}). Thus, we deduce $H( I)+H( I_\infty)\leq h(I)+\delta+h(I_\infty)+\delta_\infty\leq h(D)+2(\delta+\delta_\infty)=O(h(D)+n^2\cf)$ by Lemma \ref{boundy}.

We consider the second item: For a matrix $N\in K^{n\times n}$ denote by $g_N\in A$ a nonzero polynomial of minimal degree such that $g_NN\in A^{n\times n}$.
Then, by the definition of $\widetilde{T}$ we have $h(\widetilde{T})\leq h(g_TT)$. Let us estimate the height of $g_TT$.

We consider the matrices $M,M'\in K^{n\times n}$ with $M(1\ \theta \dots \theta ^{n-1})^\tp=(b_1 \dots b_n)^\tp$ and $M'(1\ \theta \dots \theta ^{n-1})^\tp=(b'_1\dots b'_n)^\tp$, where $\bb=(b_1,\dots,b_n)$ and $\bb'=(b'_1,\dots,b'_n)$. Then, $T=MM'^{-1}$ is the transition matrix from $\bb$ to $\bb'$. Clearly, $|g_T|\leq |g_M|+|g_{M'^{-1}}|$, since $g_Mg_{M'^{-1}}T\in A^{n\times n}$ and $|g_T|$ is minimal. Then, Lemma \ref{hightinversemat} shows that
\begin{align}\label{gg1g2}
h(\widetilde{T})\leq h(g_TT)= |g_T|+h(T)\leq  |g_M|+h(M)+|g_{M'^{-1}}|+h(M'^{-1}).
\end{align}
As $\bb$ is an Hermite basis, Corollary \ref{hightideal2} shows that $|g_M|+h(M)=O(H( I))$. We estimate $|g_{M'^{-1}}|+h(M'^{-1})$ and consider
$$
M'\dia(1, t^{\cf}, \dots, t^{(n-1)\cf })(1\ \theta_\infty\dots\theta_\infty^{n-1})^\tp=(b'_1\dots b'_n)^\tp.
$$
We set $Q:=M'\dia(1, t^{\cf}, \dots, t^{(n-1)\cf })$. As $M'^{-1}=\dia(1, t^{\cf}, \dots, t^{(n-1)\cf })Q^{-1}$, we obtain 
\begin{align}\label{keinename}
|g_{M'^{-1}}|+h(M'^{-1})\leq |g_{M'^{-1}}|+(n-1)\cf +h(Q^{-1}).
\end{align}
As $g_{Q^{-1}}M'^{-1}\in A^{n\times n}$, we deduce $|g_{M'^{-1}}|\leq |g_{Q^{-1}}|$. Arguing as we did in the proof of item \textit{3} of Lemma \ref{hightinversemat}, we see that $(g_Q^n\det Q)Q^{-1}\in A^{n\times n}$ with $g_Q^n\det Q\in A$; hence $|g_{Q^{-1}}|\leq|g_Q^n\det Q| \leq n(g_Q+h(Q))$. Moreover, we have $h(Q^{-1})\leq nh(Q)$. As $Q$ is the transition matrix from $\bb'$ to $(1,\t_\infty,\dots,\t_\infty^{n-1})$, it holds $|g_Q|+h(Q)=O(H(I_\infty))$ by Corollary \ref{hightideal2} and therefore $|g_{M'^{-1}}|+h(M'^{-1})=O(nH(I_\infty)+n\cf)$ by (\ref{keinename}). Finally, (\ref{gg1g2}) and item \textit{1} show that
\begin{align}\label{smallhight}
h(\widetilde{T})=O(H( I)+n\cf+nH(I_\infty))=O(nh(D)+n^3\cf).
\end{align} 

\end{proof}

\begin{corollary}\label{complexRR}
Let $D$ be a divisor with $\mathcal{L}(D)= I \cap  I_\infty$ and $\bb$ and $\bb'$ as above. Then, Algorithm \ref{Algo2} needs at most 
$$O(n^5(h(D)+n^2\cf)^2)$$ 
arithmetic operations in $k$ to compute $\sm(D)$.
\end{corollary}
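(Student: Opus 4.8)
The plan is to simply combine the complexity bound of Lemma \ref{costrrcom} with the size estimates of Lemma \ref{hightbound}. By Lemma \ref{costrrcom}, Algorithm \ref{Algo2} performs at most
$$
C(T)+O\!\left(n^3\,h(\widetilde T)\,(n+h(\widetilde T))\right)
$$
operations in $k$. The second item of Lemma \ref{hightbound} gives $h(\widetilde T)=O(nh(D)+n^3\cf)$, so $n+h(\widetilde T)=O(nh(D)+n^3\cf)$ as well (the leading term $n$ is absorbed since $h(D)\ge 1$ whenever $D\ne 0$, and the $D=0$ case is trivial). Hence the reduction part of the algorithm costs
$$
O\!\left(n^3\cdot(nh(D)+n^3\cf)^2\right)=O\!\left(n^5\,(h(D)+n^2\cf)^2\right)
$$
operations in $k$, after factoring $n^2$ out of the square.

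The remaining point is to check that $C(T)$, the cost of computing the transition matrix $T$ from $\bb$ to $\bb'$, does not exceed this bound. Here $T=MM'^{-1}$ in the notation of the proof of Lemma \ref{hightbound}, where $M$ and $M'$ are the matrices expressing $\bb$ and $\bb'$ in terms of the power bases $\bb_\theta$ and $\bb_{\theta_\infty}$; both are available as input (an Hermite basis of $I$ and a triangular reduced basis of $I_\infty$), and by Corollary \ref{hightideal2} together with Lemma \ref{hightbound}(1) all their entries, after clearing denominators, have height $O(H(I)+H(I_\infty)+n\cf)=O(nh(D)+n^3\cf)=O(n(h(D)+n^2\cf))$. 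Inverting an $n\times n$ matrix over $K$ with entries of height $h$ and multiplying two such matrices costs $O(n^{\omega+1}h^2)$ or, with schoolbook linear algebra and naive polynomial arithmetic, $O(n^4h^2)$ operations in $k$; the base change relating $\theta$ and $\theta_\infty$ is just the diagonal scaling by $t^{i\cf}$ and is cheaper. With $h=O(n(h(D)+n^2\cf))$ this is $O(n^6(h(D)+n^2\cf)^2)$ — one factor of $n$ worse than the stated bound — unless one is slightly more careful, so the main obstacle is to argue that $C(T)$ fits within $O(n^5(h(D)+n^2\cf)^2)$.

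The resolution I would use is that one never needs $T$ as a dense matrix over $K$ with such large common denominators: since $\bb$ is an Hermite (triangular) basis and $\bb'$ is a triangular basis of $I_\infty$, one computes the coordinate vectors $c_{\bb'}(b_i)$ for $b_i\in\bb$ by back-substitution against the triangular matrix $M'$, which costs $O(n^2)$ arithmetic operations on polynomials of degree $O(H(I)+H(I_\infty))=O(n(h(D)+n^2\cf))$, i.e.\ $O(n^4(h(D)+n^2\cf)^2)$ operations in $k$ in total using naive multiplication — well within the claimed bound. (Alternatively, one invokes the fact, as the text does just before this corollary, that computing $T$ and the associated base changes is a standard linear-algebra task over $K$ whose cost is dominated by the reduction step once the entry sizes are controlled by Corollary \ref{hightideal2} and Lemma \ref{hightbound}.) Either way $C(T)=O(n^5(h(D)+n^2\cf)^2)$, and adding this to the reduction cost computed above yields the stated estimate. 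The only genuinely delicate step is the bookkeeping of heights through the products and inverse in $T=MM'^{-1}$, but this has already been carried out in the proof of Lemma \ref{hightbound}, so here it suffices to quote (\ref{smallhight}) and substitute.
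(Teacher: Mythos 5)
Your proposal follows essentially the same route as the paper: substitute Lemma \ref{hightbound} into Lemma \ref{costrrcom} for the reduction cost, and bound $C(T)$ by exploiting the triangular structure of the bases of $I$ and $I_\infty$ (the paper inverts the triangular matrix $Q$ at cost $O(n^3h(I_\infty))$ and then forms the product $MM'^{-1}$ at cost $O(n^3d^2)$ with $d=O(n(h(D)+n^2\cf))$, which already yields $O(n^5(h(D)+n^2\cf)^2)$). Your worry about an extra factor of $n$ comes from pricing a schoolbook matrix product at $O(n^4h^2)$ instead of $O(n^3h^2)$ polynomial operations, so the back-substitution detour, though valid, is not actually needed; the conclusion stands either way.
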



\begin{proof}
We apply Lemma \ref{hightbound} to Lemma \ref{costrrcom} and deduce that the complexity of Algorithm \ref{Algo2} is given by
$$
C(T)+O(n^3h(\widetilde{T})(n+h(\widetilde{T})))=C(T)+O(n^5(h(D)+n^2\cf)^2).
$$
In order to estimate $C(T)$ we consider the proof of Lemma \ref{hightbound}. There we have seen that $T=MM'^{-1}$. Clearly, the cost $C(T)$ for computing $T$ is dominated by the cost of the inversion of $M'$ and the realization of the matrix product $MM'^{-1}$.

Since $M'=Q\dia(1, t^{-\cf}, \dots, t^{-(n-1)\cf })$, the cost for determining $M'^{-1}$ is dominated by the inversion of $Q$. As mentioned above we can assume that $\bb'$ is a Hermite basis of $\ii_\infty$; that is, there exist $\beta \in \Z$ such that $t^\beta Q$ is in HNF. We can assume that $\beta=0$. Hence, we have to invert a lower triangular matrix, whose entries $q_{i,j}$ satisfy $|q_{i,j}|=O(h(I_\infty))$ by Corollary \ref{hightideal2}. By Gaussian elimination this can be realized with at most $O(n^3h(I_\infty))$ operations in $k$.

Since $h(g_MM)=O(h(I))$ and $h(g_{M'^{-1}}M'^{-1})=O(n\cf+nh(I_\infty))$, the cost for computing $MM'^{-1}$ is bounded by 
$O(n^3(n\cf+nh(I_\infty)+h(I))^2)$ operations in $k$. Hence, $C(T)$ is dominated by $O(n^5(h(D)+n^2\cf)^2)$.
\end{proof}

In the sequel we assume that the constant field $k$ is finite with $q$ elements and we admit fast multiplication techniques of Sch\"onhage-Strassen \cite{Gath}. Let $R$ be a ring and let $g_1,g_2\in R[x]$ be two polynomials, whose degrees are bounded by $d_1$ and $d_2$, respectively. Then, the multiplication $g_1\cdot g_2$ needs at most $O(\max\{d_1,d_2\}^{1+\epsilon})$ operations in $R$. 

 \begin{theorem}\label{complexrrspaci}
 Let $F/k$ be a function field with defining polynomial $f$ of degree $n$ and let $D=\sum_{P\in\pp_F}a_PP$ be a divisor of $F/k$. Then, the successive minima of $D$ and a $k$-basis of $\ll(D)$ can be determined with
 $$
O(n^5(h(D)+n^2\cf)^2+n^{5+\epsilon}\cf^{2+\epsilon}\log q)
 $$
 operations in $k$.
 \end{theorem}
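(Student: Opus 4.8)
The plan is to combine Corollary~\ref{complexRR} with a complexity analysis of the preliminary computations needed to produce its input data, and then to add the two contributions. Corollary~\ref{complexRR} already shows that, once a Hermite basis $\bb$ of the finite ideal $I$ and a reduced basis $\bb'$ of $(F,\vv)$ (presented as a triangular basis of $I_\infty$ as in \cite{Stain}) are available, Algorithm~\ref{Algo2} determines $\sm(D)$ and a $k$-basis of $\ll(D)=I\cap I_\infty$ in $O(n^5(h(D)+n^2\cf)^2)$ operations in $k$; this accounts for the first summand. Hence the remaining task is to bound the cost of constructing $\bb$ and $\bb'$ starting only from the defining polynomial $f$ and the divisor $D$.

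First I would split the preliminary phase into (i) a part depending only on $f$ — computing the maximal orders $\oo_F=\mathrm{Cl}(A,F)$ and $\oo_{F,\infty}=\mathrm{Cl}(A_\infty,F)$, equivalently OM/Montes data for the primes dividing $\dsc f$ and $\dsc f_\infty$ — and (ii) a part depending on $D$, namely assembling the ideal representation $(I,I_\infty)$ of $D$ from this data and extracting the Hermite and triangular bases $\bb$, $\bb'$ as in \cite{GenCom,Stain}. For (ii), the relevant sizes are controlled by Lemmas~\ref{boundy} and~\ref{hightbound}: the heights of $I$ and $I_\infty$ are $O(h(D)+n^2\cf)$ and the entries of the transition matrices have degree $O(nh(D)+n^3\cf)$, so these manipulations cost at most $O(n^5(h(D)+n^2\cf)^2)$ operations in $k$ and are absorbed into the first summand. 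For (i), the Montes algorithm applied to $f$ (degree $n$, coefficient degrees $\le n\cf$) and to $f_\infty$, together with the root-finding over residue fields it requires, runs — using Sch\"onhage--Strassen multiplication — in $O(n^{5+\epsilon}\cf^{2+\epsilon}\log q)$ operations in $k$; the factor $\log q$ comes from the finite-field factorizations and the $\epsilon$ from fast polynomial arithmetic, and one uses $\delta,\delta_\infty=O(n^2\cf)$ from Lemma~\ref{boundy} to bound the amount of Montes data. Adding the costs of (i), (ii) and Corollary~\ref{complexRR} yields the claimed bound.

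The main obstacle will be step (i): pinning down the complexity of the maximal-order and ideal-factorization computation with the precise exponents $n^{5+\epsilon}\cf^{2+\epsilon}\log q$. This rests on the known running-time estimates for the Montes (OM) algorithm over $\F_q[t]$, and the delicate point is to express all intermediate quantities — the numbers of types, their degrees, and the sizes of their representatives — purely in terms of $n$, $\cf$ and $h(D)$, via the discriminant bounds of Lemma~\ref{boundy}, and then to verify that it is precisely the $\log q$-dependent finite-field arithmetic, rather than the $D$-dependent linear algebra, that produces this second summand, everything else being subsumed by the first.
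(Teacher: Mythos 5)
Your proposal is correct and follows essentially the same route as the paper: the first summand comes from Corollary~\ref{complexRR}, and the second from the precomputation of the Hermite basis of $I$ and the reduced (triangular) basis of $I_\infty$, bounded via Lemmas~\ref{boundy} and~\ref{hightbound}. The step you flag as the main obstacle — the precise $O(n^{5+\epsilon}\cf^{2+\epsilon}\log q)$ cost of the maximal-order/ideal-basis computation — is exactly the point the paper settles by citing explicit complexity results from the author's thesis (\cite[Theorem 5.3.19, Corollary 5.3.14]{phd}) rather than re-deriving the Montes-algorithm analysis.
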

\begin{proof}
Let $(I,I_\infty)$ be the ideal representation of $D$. In order to determine a $k$-basis of $\ll(D)$ we compute a Hermite basis $\bb$ of $I$, a reduced basis $\bb'$ of $(F,\v{~})$, and apply Algorithm \ref{Algo2}.

By Lemma \ref{boundy} we have $\delta+\delta_\infty =O(n^2\cf)$. Moreover, Lemma \ref{hightbound} shows that $H(I)+H(I_\infty)=O(h(D)+n^2\cf)$. By \cite[Theorem 5.3.19, Corollary 5.3.14]{phd} the computation of $\bb$ and $\bb'$ takes $O(n^3H(I)^2+n^{1+\epsilon}\delta^{2+\epsilon}\log q)$ and $O(n^{2+\epsilon}H(I_\infty)^{1+\epsilon}+n^{1+\epsilon}\delta_\infty \log(q)+n^{1+\epsilon}\delta_\infty^{2+\epsilon})$ operations in $k$, respectively. Together we deduce

\begin{align*}
&\ O(n^3(H(I))^2+n^{1+\epsilon}\delta^{2+\epsilon}\log q+    n^{2+\epsilon}H(I_\infty)^{1+\epsilon}+n^{1+\epsilon}\delta_\infty \log(q)+n^{1+\epsilon}\delta_\infty^{2+\epsilon})\\
=&\ O(n^3(H(I)+H(I_\infty))^2+n^{1+\epsilon}(\delta+\delta_\infty)^{2+\epsilon}\log q)\\
=&\ O(n^3(h(D)+n^2\cf)^2+n^{5+\epsilon}\cf^{2+\epsilon}\log q)
\end{align*}
operations in $k$.

Additionally, we run Algorithm \ref{Algo2}, which needs $O(n^5(h(D)+n^2\cf)^2)$ operations in $k$ by Corollary \ref{complexRR}. Together we can estimate the computation of $\sm(D)$ and a $k$-basis of $\ll(D)$ by
$$
O((n^5(h(D)+n^2\cf)^2+n^{5+\epsilon}\cf^{2+\epsilon}\log q))
$$
operations in $k$.
\end{proof}

\begin{corollary}\label{complexRRzero}
For a divisor $D$, let $D=D_0+D_\infty$ as defined in the proof of Lemma \ref{hightbound}. If there exists an integer $r$ such that $D_\infty=r(t)_\infty$, then the successive minima of $D$ and a $k$-basis of $\ll(D)$ can be determined with
 $$
O(n^3(h(D)+n^3\cf)^2+n^{5+\epsilon}\cf^{2+\epsilon}\log q)
 $$
 operations in $k$.
\end{corollary}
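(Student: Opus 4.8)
The plan is to retrace the proofs of Corollary~\ref{complexRR} and Theorem~\ref{complexrrspaci}, isolating the one place where the hypothesis $D_\infty=r(t)_\infty$ buys a factor of $n$. First I would record its structural consequence: since $v_P(t^r)=-r\,e(P/P_\infty)$ for every $P\in\pp_\infty(F)$, the infinite ideal attached to $D$ is the \emph{principal} ideal $I_\infty=t^r\oo_{F,\infty}$, and the norm $\vv_D$ of $(F,\vv_D)$ is the twist by the integer $-r$ (Definition~\ref{twist}) of the canonical norm $\vv_0$ of $F$ attached to the zero divisor (the norm of the Examples in Section~1). Hence $(I,\vv_D)=(I,\vv_0)(-r)$, so that $\sm(D)=\sm(D_0)-r$, and by item~(3) of Proposition~\ref{basic} a $k$-basis of $\ll(D)=(I,\vv_D)_{\le 0}=(I,\vv_0)_{\le r}$ is read off from a reduced basis of $(I,\vv_0)$. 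Thus the whole computation reduces to running Algorithm~\ref{Algo2} on $(I,\vv_0)$; the orthonormal basis $\bb'$ it requires is a reduced triangular $A_\infty$-basis of $\oo_{F,\infty}$ — exactly the datum treated in Subsection~\ref{ideals}, but with $\oo_{F,\infty}$ in place of $I_\infty$, and, crucially, independent of $D$. By Lemma~\ref{hightideal} and Lemma~\ref{boundy} its transition matrix $B_0:=T(\bb'\to\bb_{\t_\infty})$ is triangular with $|g_{B_0}|+h(B_0)=O(H(\oo_{F,\infty}))=O(\delta_\infty)=O(n^2\cf)$.

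Next I would redo the estimate of $h(\widetilde T)$ from the proof of Lemma~\ref{hightbound}, writing $T=MM'^{-1}$ as there but with $M'$ the matrix of the $D$-free basis $\bb'$. The contribution $|g_M|+h(M)=O(H(I))=O(h(D)+n^2\cf)$ of the finite ideal is unchanged. For the infinite part, $Q:=M'\dia(1,t^\cf,\dots,t^{(n-1)\cf})$ is exactly the matrix $B_0$ above, so $M'^{-1}=\dia(1,t^\cf,\dots,t^{(n-1)\cf})\,B_0^{-1}$ with $B_0^{-1}$ a $D$-independent triangular matrix; hence, by Lemma~\ref{hightinversemat} applied to $B_0$, $|g_{M'^{-1}}|+h(M'^{-1})\le (n-1)\cf+\bigl(|g_{B_0^{-1}}|+h(B_0^{-1})\bigr)=O(n\cf)+O(n^3\cf)=O(n^3\cf)$. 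The gain lies precisely here: in the general case one had $|g_{M'^{-1}}|+h(M'^{-1})=O(nH(I_\infty))=O(n\,h(D)+n^3\cf)$ because a $D$-dependent triangular matrix of height $O(h(D)+n^2\cf)$ had to be inverted, whereas now only the fixed matrix $B_0$ (height $O(n^2\cf)$) is inverted, the entire $D$-dependence having been pushed into the finite ideal and the scalar shift $r$. Therefore $h(\widetilde T)=O(h(D)+n^3\cf)$, against $O(n\,h(D)+n^3\cf)$ in Lemma~\ref{hightbound}.

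It then remains to assemble the pieces as in Corollary~\ref{complexRR}. Feeding $h(\widetilde T)=O(h(D)+n^3\cf)$ into Lemma~\ref{costrrcom} bounds the cost of Algorithm~\ref{Algo2} by $C(T)+O\bigl(n^3(h(D)+n^3\cf)^2\bigr)$, and $C(T)$ is itself $O\bigl(n^3(h(D)+n^3\cf)^2\bigr)$: the inversion of $M'$ reduces, as in the proof of Corollary~\ref{complexRR}, to back-substitution on the triangular matrix $B_0$ of height $O(n^2\cf)$ (cost $O(n^5\cf)$), followed by the matrix product $MM'^{-1}$ of matrices of height $O(h(D)+n^3\cf)$. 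Finally the precomputation — a Hermite basis $\bb$ of $I$, and a reduced triangular basis of $\oo_{F,\infty}$ in place of $\bb'$ (a $D$-free task with $H(\oo_{F,\infty})=\delta_\infty=O(n^2\cf)$) — costs, by the bounds of \cite{phd,Stain} used in Theorem~\ref{complexrrspaci} together with $H(I)=O(h(D)+n^2\cf)$ and $\delta,\delta_\infty=O(n^2\cf)$, at most $O\bigl(n^3(h(D)+n^2\cf)^2+n^{5+\epsilon}\cf^{2+\epsilon}\log q\bigr)$ operations in $k$. Summing the three contributions gives the stated bound. The step I expect to be the main obstacle is the first one: making fully precise that $\bb'$ may be taken $D$-independent and that the height estimates of Lemma~\ref{hightideal} and Corollary~\ref{hightideal2} transfer without reintroducing an $|r|$-sized term inside a product with $n^2\cf$; once that is granted, the remainder is a routine re-run of the general complexity analysis with $H(I_\infty)$ replaced by $O(n^2\cf)$.
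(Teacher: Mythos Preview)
Your proposal is correct and follows essentially the same approach as the paper: reduce to the case $I_\infty=\oo_{F,\infty}$ via the shift/twist by $r$, observe that then $H(I_\infty)=\delta_\infty=O(n^2\cf)$ is $D$-independent, feed this into the estimate (\ref{smallhight}) to get $h(\widetilde T)=O(h(D)+n^3\cf)$, and substitute into the complexity analysis of Corollary~\ref{complexRR} and Theorem~\ref{complexrrspaci}. You spell out more of the intermediate steps (the twist identification, the bound on $|g_{M'^{-1}}|+h(M'^{-1})$, and the $C(T)$ estimate) than the paper does, but the logical skeleton is identical.
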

\begin{proof}
Denote by $(s_1,\dots,s_n)$ and $(s'_1,\dots,s'_n)$ the successive minima of $D$ and $D_0$, respectively. Clearly, $D_\infty=r(t)_\infty$ implies $s_i=s'_i+r$ for $i=1,\dots,n$. Moreover by Corollary \ref{basisRRSpace} it is sufficient to determine a reduced basis of the lattice induced by $D_0$ in order to deduce a basis of $\ll(D)$. Hence, we can assume that $r=0$. Then, the ideal representation of $D$ is given by $(I,I_\infty)$ with $I_\infty = \oo_{F,\infty}$. Let $\widetilde{T}$ be defined as above. We consider (\ref{smallhight}) with $I_\infty = \oo_{F,\infty}$. Then, $h(\widetilde{T})=O(H(I)+n\cf+n\delta_\infty)=O(H(I)+n^3\cf)$ by the definition of $H(I_\infty)$ and Lemma \ref{boundy}. We apply Lemma \ref{hightbound} and deduce $h(\widetilde{T})=O(h(D)+n^3\cf)$. If we replace the bound for $h(\widetilde{T})$ in the proof of Corollary \ref{complexrrspaci} by the new one, we deduce the complexity bound from the statement.

\end{proof}

\end{document}